\newtheorem{algorithm}{Algorithm}
\newtheorem{assumption}{Assumption}
\newtheorem{lem}{Lemma}
\newtheorem{thm}{Theorem}
\newtheorem{Def}{Definition}
\newtheorem{Col}{Corollary}
\newtheorem{remark}{Remark}
\begin{document}
\title{No-regret distributed learning in subnetwork zero-sum games}
\author{Shijie~Huang\textsuperscript{a,b},
	Jinlong~Lei\textsuperscript{c},
	Yiguang~Hong\textsuperscript{a,c},
	Uday~V.~Shanbhag\textsuperscript{d},
	and Jie~Chen\textsuperscript{c}
\thanks{
This work was supported by Shanghai Municipal Science and Technology Major Project (2021SHZDZX0100) and the National Natural Science Foundation of China (61733018).

\noindent\textsuperscript{a} Key Laboratory of Systems and Control, Academy of Mathematics and Systems Science, Chinese Academy of Sciences, Beijing 100190, China;

\noindent\textsuperscript{b} School of Mathematical Sciences, University of Chinese Academy of Sciences, Beijing 100049, China; 

\noindent\textsuperscript{c} Department of Control Science and Engineering \& Shanghai Research Institute for Intelligent Autonomous Systems, Tongji University, Shanghai 201804, China; 

\noindent\textsuperscript{d} Department of Industrial and Manufacturing Engineering, Pennsylvania State University, University Park, PA 16802, USA.

Email address: \texttt{sjhuang@amss.ac.cn} (S. Huang),
 \texttt{leijinlong@tongji.edu.cn} (J.~Lei), \texttt{yghong@iss.ac.cn} (Y.~Hong), \texttt{udaybag@psu.edu} (U.~V.~Shanbhag), \texttt{chenjie@bit.edu.cn} (J.~Chen).

}}
%\author{Shijie~Huang\textsuperscript{a,b},
%Jinlong~Lei\textsuperscript{c},
%Yiguang~Hong\textsuperscript{a,c},
%Uday~V.~Shanbhag\textsuperscript{d},
%and Jie~Chen\textsuperscript{c}
%}
%
\maketitle
%\newcommand\blfootnote[1]{%
%	\begingroup
%	\renewcommand\thefootnote{}\footnote{#1}%
%	\addtocounter{footnote}{-1}%
%	\endgroup
%}
%\renewcommand{\thefootnote}{\alph{footnote}}
%\blfootnote{This work was supported by Shanghai Municipal Science and Technology Major Project (2021SHZDZX0100) and the National Natural Science Foundation of China (61733018).}
%\footnotetext[1]{Key Laboratory of Systems and Control, Academy of Mathematics and Systems Science, Chinese Academy of Sciences, Beijing 100190, China;}
%\footnotetext[2]{School of Mathematical Sciences, University of Chinese Academy of Sciences, Beijing 100049, China;}
%\footnotetext[3]{Department of Control Science and Engineering \& Shanghai Research Institute for Intelligent Autonomous Systems, Tongji University, Shanghai 201804, China;}
%\footnotetext[4]{Department of Industrial and Manufacturing Engineering, Pennsylvania State University, University Park, PA 16802, USA;}
%\blfootnote{Email address: \texttt{sjhuang@amss.ac.cn} (S. Huang), \texttt{leijinlong@tongji.edu.cn} (J.~Lei), \texttt{yghong@iss.ac.cn} (Y.~Hong), \texttt{udaybag@psu.edu} (U.~V.~Shanbhag), \texttt{chenjie@bit.edu.cn} (J.~Chen).}

%%%%%%
\begin{abstract}
In this paper, we consider a distributed learning problem in a subnetwork zero-sum game, where agents are competing in different subnetworks. These agents are connected through time-varying graphs where each agent has its own cost function and can receive information from its neighbors. We propose a distributed mirror descent algorithm for computing a Nash equilibrium and establish a sublinear regret bound on the sequence of iterates when the graphs are uniformly strongly connected and the cost functions are convex-concave. Moreover, we prove its convergence with suitably selected diminishing step-sizes for a strictly convex-concave cost function. We also consider a constant step-size variant of the algorithm and establish an asymptotic error bound between the cost function values of running average actions and a Nash equilibrium. In addition, we apply the algorithm to compute a mixed-strategy Nash equilibrium in subnetwork zero-sum finite-strategy games, which have merely convex-concave (to be specific, multilinear) cost functions, and obtain a final-iteration convergence result and an ergodic convergence result, respectively, under different assumptions.
\end{abstract}

\begin{IEEEkeywords}
no-regret distributed learning, subnetwork zero-sum game, distributed mirror descent, constant step-size
\end{IEEEkeywords}

\IEEEpeerreviewmaketitle

%%%%
%%%%-----------------
\section{INTRODUCTION}
A non-cooperative game is a framework for studying the interaction of agents whose decisions are affected by the actions of others. Zero-sum games represent an important class of non-cooperative games, and some complex decision-making problems, such as power allocation in wireless communication \cite{boyd2004convex}, robust portfolio selection in finance \cite{goldfarb2003robust},  and robust matched filtering in signal processing \cite{kassam1985robust} may be modeled by generalized zero-sum games, named as ''subnetwork zero-sum'' \cite{lou2015nash}, where two subnetworks of agents are engaged in a zero-sum game. A core equilibrium concept of non-cooperative games is the Nash equilibrium (NE) and the NE seeking problems have been extensively studied.

Conventional NE seeking algorithms mainly deal with full-decision information scenario, where each agent may observe the actions of all of its rivals. However, in practice, agents may have to make decisions based only on limited information, suggesting the design of distributed NE seeking algorithms. For example, in \cite{Koshal}, distributed synchronous and asynchronous algorithms with diminishing step-sizes have been provided with an error bound for a constant step-size for strictly aggregative games, while in \cite{gadjov2020single}, a fully-distributed generalized NE seeking algorithm for aggregative games has been further designed based on an operator splitting scheme. Moreover, in \cite{lei2020distributed} and \cite{pang2020distributed}, a regularized distributed algorithm for merely monotone aggregative games and a gradient-free distributed algorithm for convex games with limited cost function knowledge have been proposed, respectively. Distinct from these directions, a linearly convergent distributed gradient-response scheme for stochastic aggregative games has been introduced in \cite{Lei_CDC}. In addition, continuous-time distributed algorithms via consensus-based approaches have also been analyzed in \cite{Liang} and \cite{Ye2017}.

Much of the aforementioned work focuses on whether those proposed algorithms guarantee the convergence to a NE. In game-theoretic learning algorithms, there exists a class of dynamics that focuses on the learning process, called no-regret dynamics \cite{Roughgarden}. A no-regret learning process is a natural choice for the agents since no player wants to realize that the action sequence he/she employed is strictly inferior to taking some fixed action in hindsight. In \cite{daskalakis2011near}, the authors used Nesterov's excessive gap technique to propose a near-optimal no-regret algorithm which can ensure that the cost value converges to the cost value of a NE with rate $O(\frac{(\ln T)^{\frac{3}{2}}}{T})$ for two-player zero-sum games with finite action sets. In \cite{rakhlin2013optimization}, this rate was improved to $O(\frac{\ln T}{T})$ by introducing a modified optimistic mirror descent algorithm. Similarly, \cite{kangarshahi2018let} further proposed an optimal no-regret algorithm with a rate $O(\frac{1}{T})$. Moreover, no-regret learning algorithms can only converge to a set of coarse correlated equilibria in a general game, while \cite{mertikopoulos2019learning} provided sufficient conditions under which no-regret learning converges to a NE of the underlying game. Additionally \cite{zhou2018learning} developed an asynchronous no-regret learning algorithm for games with lossy feedback.

However, much of prior research only considers centralized no-regret learning algorithms. Although there are many existing distributed NE seeking algorithms, it is unclear if these algorithms are no-regret schemes \cite{zhou2018learning}. In fact, the only result on distributed online game that we are aware of was provided in \cite{lu2020online}. Inspired by the distributed online optimization problem (\cite{shahrampour2017distributed,yuan2017adaptive}), the authors in \cite{lu2020online} designed an online distributed algorithm to track the generalized NE in dynamic environments and established a sublinear regret bound. Different from the no-regret learning in games that we are considering, \cite{lu2020online} assumed that the cost function is time-varying and the offline benchmark of the regret is the cost value of a NE.

The motivation of this paper is to design effective no-regret distributed algorithms for subnetwork zero-sum games considered in \cite{lou2015nash} and \cite{gharesifard2013distributed}. In the multi-agent network with two subnetworks, agents in the same subnetwork collaborate for consensus, while playing antagonistic roles with the agents in the other subnetwork. The contributions of this paper are summarized as follows:
\begin{itemize}
  \item We propose a distributed learning algorithm based on a mirror descent scheme and derive a regret bound of the algorithm, which shows that the algorithm is no-regret under suitable diminishing and constant step-sizes. Moreover, we prove the convergence to the unique NE when the cost function is strictly convex-concave. To the best of our knowledge, there has been no theoretical result on no-regret distributed algorithms for computing a NE of subnetwork zero-sum games yet.
  \item Although the proposed algorithm cannot guarantee convergence to a NE under constant step-sizes, we obtain an error bound similar to that in \cite{nedic2009subgradient}. This error bound shows that the running average actions generated by the algorithm provide approximate solutions to the NE seeking problem.
  \item We further apply the algorithm to compute a mixed-strategy NE of subnetwork zero-sum finite-strategy games and prove its final-iteration convergence under a restrictive assumption. In addition, we also prove the ergodic convergence of the algorithm different from \cite{mertikopoulos2019learning}, which provides an ergodic convergence result for finite two-person zero-sum games.
\end{itemize}

The remainder of this paper is organized as follows. In Section \uppercase\expandafter{\romannumeral 2}, we formulate the no-regret distributed learning problem in subnetwork zero-sum games and propose a distributed mirror descent algorithm, while in Section \uppercase\expandafter{\romannumeral 3}, we establish several useful lemmas and further give a regret bound analysis of the proposed algorithm. In Section \uppercase\expandafter{\romannumeral 4}, we prove that the algorithm converges to the NE under diminishing step-sizes. We also consider the case of constant step-sizes and establish an asymptotic error bound for the cost value of the averaged iterates. Then, in Section \uppercase\expandafter{\romannumeral 5}, we apply the proposed algorithm to subnetwork zero-sum finite-strategy games and prove two convergence results. In Section \uppercase\expandafter{\romannumeral 6}, we provide simulations to verify our theoretical analysis. Finally, in Section \uppercase\expandafter{\romannumeral 7}, we conclude the paper.

\noindent{\bf Notations}. Denote by $\mathbb{R}^n$ the $n$-dimensional real Euclidean space. For column vectors $x,y\in\mathbb{R}^n$, $\langle x,y\rangle$ denotes the inner product and $\parallel\cdot\parallel_p$ ($p\ge 1$) denotes the $l_p$ norm. For $x\in\mathbb{R}^n$, $\mathbb{B}(x,\epsilon)$ denotes a ball with $x$ the center and $\epsilon > 0$ the radius. For a norm $\|\cdot\|$\footnote{Unless otherwise specified, we use $\|\cdot\|$ to represent any possible $l_p$ ($p\ge 1$) norm in this paper.} on $\mathbb{R}^n$, $\|y\|_{\ast}\triangleq\sup\{\langle y,x\rangle: \|x\|\le 1\}$ denotes the dual norm. $A_{ij}$ denotes the element in the $i$th row and $j$th column of matrix $A$. For a function $f(x_1,\dots,x_N)$, denote $\nabla f$ as the gradient of $f$ and $\partial_i f$ as the subdifferential of $f$ with respect to $x_i$. A function $f(x_1,x_2)$ is said to be (strictly) convex-concave (concave-convex) if $f(x_1,x_2)$ is (strictly) convex (concave) in $x_1$ for any $x_2$ and (strictly) concave (convex) in $x_2$ for any $x_1$. A digraph (directed graph) is characterized by $\mathcal{G} = (\mathcal{V},\mathcal{E})$, where $\mathcal{V} = \{1,\dots,n\}$ is the set of nodes and $\mathcal{E}\subset\mathcal{V}\times\mathcal{V}$ is the set of edges, where $(j,i)\in\mathcal{E}$ if agent $i$ can obtain information from agent $j$. Associated with graph $\mathcal{G}$, there is an adjacency matrix $W = [w_{ij}]\in\mathbb{R}^{n\times n}$ with nonnegative elements, which satisfy that $w_{ij} > 0$ if and only if $(j,i)\in\mathcal{E}$. A path from $i_1$ to $i_p$ is an alternating sequence $i_1e_1\cdots i_{p-1}e_{p-1}i_p$ such that $e_r = (i_r,i_{r+1})\in\mathcal{E}$ $(r = 1,\dots,p-1)$. A digraph is strongly connected if there is a path between any pair of distinct nodes.

%%%%
%%%%-----------------
\section{PROBLEM FORMULATION AND ALGORITHM}
In this section, we formulate the no-regret distributed learning problem in a subnetwork zero-sum game and propose a distributed mirror descent algorithm.
\subsection{Problem Formulation}
Consider a zero-sum game between two subnetworks $\Sigma_1$ and $\Sigma_2$, composed of agents $\mathcal{V}_1\triangleq\{v_1^1,\dots, v_1^{n_1}\}$ and $\mathcal{V}_2\triangleq\{v_2^1,\dots,v_2^{n_2}\}$, respectively. Assume that the subnetworks $\Sigma_1$ and $\Sigma_2$ are time-varying and described by directed graph sequences $\mathcal{G}_1(k) = (\mathcal{V}_1,\mathcal{E}_1(k))$ and $\mathcal{G}_2(k) = (\mathcal{V}_2,\mathcal{E}_2(k))$.The interaction between $\Sigma_1$ and $\Sigma_2$ is modeled by a bipartite network $\Sigma_{12}$ (Fig. 1). Here $\Sigma_{12}$ is described by a time-varying bipartite graph sequence $\mathcal{G}_{12}(k) = (\mathcal{V}_1\cup\mathcal{V}_2,\mathcal{E}_{12}(k))$, which means that $\mathcal{E}_{12}(k)\subset\{(v_l^i,v_{3-l}^j)\mid v_l^i\in\mathcal{V}_l,v_{3-l}^j\in\mathcal{V}_{3-l},l = 1,2\}$. For $l = 1,2$, and each node $v_l^i\in\mathcal{V}_l$, denote by $\mathcal{N}_l^i(k) \triangleq \{v_l^j\mid(v_l^j,v_l^i)\in\mathcal{E}_l(k)\}$ and $\mathcal{N}_{12,l}^i(k) \triangleq \{v_{3-l}^j\mid (v_{3-l}^j,v_l^i)\in\mathcal{E}_{12}(k)\}$ the set of its neighbors in $\mathcal{V}_l$ and $\mathcal{V}_{3-l}$ at time $k$, respectively.
\begin{figure}[htbp]
\centering
\includegraphics[width = .4\textwidth]{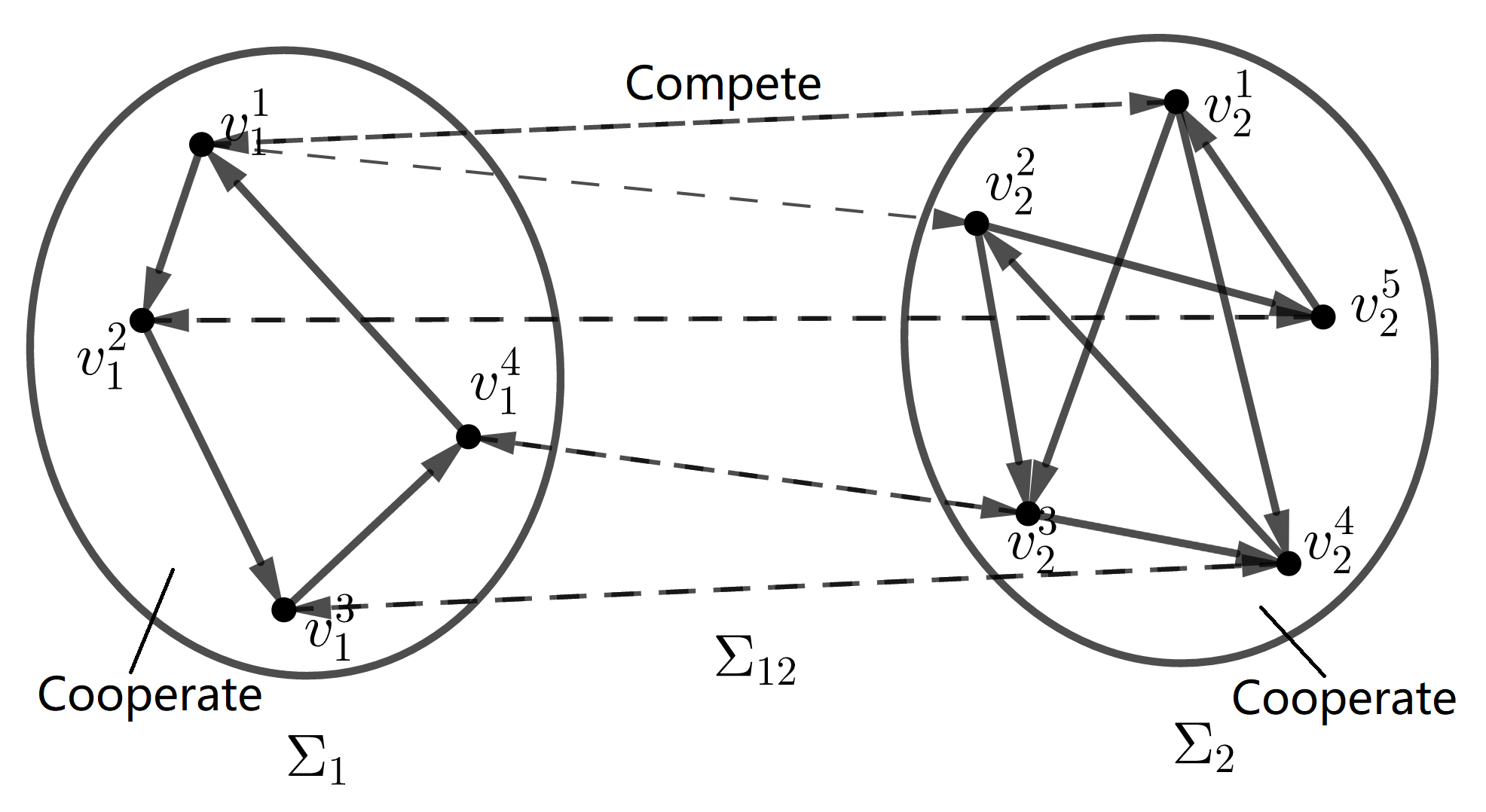}
\caption{A subnetwork zero-sum game}
\end{figure}

For each $l\in\{1,2\}$, the action set of $\Sigma_l$ is denoted by $\mathcal{X}_l\subset\mathbb{R}^{m_l}$. Each subnetwork $\Sigma_l$, $l\in\{1,2\}$ aims to choose an action $x_l\in\mathcal{X}_l$ to minimize the following global cost
\begin{equation}\label{cost_def}
  f_l(x_1,x_2) = \frac{1}{n_l}\sum_{i=1}^{n_l}f_{l,i}(x_1,x_2),\quad l\in\{1,2\}
\end{equation}
which is the average of its agents' costs at each node $v_l^i$, denoted by $f_{l,i}$. The subnetworks are engaged in a zero-sum game, namely, for any $x_l\in\mathcal{X}_l$, $l\in\{1,2\}$,
\[f_1(x_1,x_2) + f_2(x_1,x_2) = 0.\]
The following concept is well known \cite{gharesifard2013distributed}.
\begin{Def}
  An action profile $x^{\ast} = (x_1^{\ast},x_2^{\ast})$ is a Nash equilibrium (NE) of the subnetwork zero-sum game if
  \begin{equation}\label{NE_def}
    x_1^{\ast}\in\arg\min_{x_1\in\mathcal{X}_l}U(x_1,x_2^{\ast}), \text{and}\ x_2^{\ast}\in\arg\max_{x_2\in\mathcal{X}_2}U(x_1^{\ast},x_2),
  \end{equation}
  where $U(x_1,x_2) \triangleq f_1(x_1,x_2)$.
\end{Def}
We make the following assumptions on the action sets and the cost functions $f_{l,i}$.
\begin{assumption}\label{asm1}
  (i) The action sets $\mathcal{X}_1$ and $\mathcal{X}_2$ are compact and convex.\\
  (ii) For each agent $v_1^i\in\mathcal{V}_1$, the cost $f_{1,i}(\cdot,\cdot)$ is convex-concave over $\mathcal{X}_1\times\mathcal{X}_2$. Similarly, for each agent $v_2^i\in\mathcal{V}_2$, the cost $f_{2,i}(\cdot,\cdot)$ is concave-convex over $\mathcal{X}_1\times\mathcal{X}_2$.\\
  (iii) For each $v_1^i\in\mathcal{V}_1$, $f_{1,i}(x_1,x_2)$ is $L_{1,1}$-Lipschitz continuous in $x_1\in\mathcal{X}_1$ for any $x_2\in\mathcal{X}_2$ and $L_{1,2}$-Lipschitz continuous in $x_2\in\mathcal{X}_2$ for any $x_1\in\mathcal{X}_1$, i.e.,
  \begin{align*}
    |f_{1,i}(x_1,x_2) - f_{1,i}(x_1',x_2)|&\le L_{1,1}\|x_1 - x_1'\|,\ \forall x_2\in\mathcal{X}_2,\\
  |f_{1,i}(x_1,x_2) - f_{1,i}(x_1,x_2')|&\le L_{1,2}\|x_2 - x_2'\|,\ \forall x_1\in\mathcal{X}_1.
\end{align*}
  Similarly, for each $v_2^i\in\mathcal{V}_2$, $f_{2,i}(x_1,x_2)$ is $L_{2,1}$-Lipschitz continuous in $x_2\in\mathcal{X}_2$ for any $x_1\in\mathcal{X}_1$ and $L_{2,2}$-Lipschitz continuous in $x_1\in\mathcal{X}_1$ for any $x_2\in\mathcal{X}_2$.
\end{assumption}
Assumption \ref{asm1} ensures the existence of a NE, see \cite[Theorem 2.5]{gharesifard2013distributed}.
\begin{lem}
  Under Assumption \ref{asm1}, there exists a NE for the considered subnetwork zero-sum game.
\end{lem}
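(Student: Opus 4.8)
The plan is to reduce the subnetwork zero-sum game to an ordinary two-player zero-sum game and invoke a classical minimax theorem. Observe that because $f_1(x_1,x_2)+f_2(x_1,x_2)=0$, the game is completely determined by the single function $U(x_1,x_2)=f_1(x_1,x_2)$, with subnetwork $\Sigma_1$ minimizing over $x_1\in\mathcal{X}_1$ and subnetwork $\Sigma_2$ maximizing over $x_2\in\mathcal{X}_2$. So the first step is to verify the structural hypotheses needed for a minimax theorem: by Assumption \ref{asm1}(i) the sets $\mathcal{X}_1,\mathcal{X}_2$ are nonempty, compact and convex subsets of finite-dimensional Euclidean spaces; by Assumption \ref{asm1}(ii), since $U=f_1=\frac{1}{n_1}\sum_i f_{1,i}$ is an average of convex-concave functions, $U$ is itself convex in $x_1$ for each fixed $x_2$ and concave in $x_2$ for each fixed $x_1$; and by Assumption \ref{asm1}(iii), each $f_{1,i}$ (hence $U$) is Lipschitz continuous in each argument, so $U$ is jointly continuous on the compact set $\mathcal{X}_1\times\mathcal{X}_2$.

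The second step is to apply Sion's minimax theorem (or, given convexity rather than mere quasi-convexity, von Neumann's minimax theorem together with continuity and compactness), which yields
\begin{equation*}
\min_{x_1\in\mathcal{X}_1}\max_{x_2\in\mathcal{X}_2}U(x_1,x_2)=\max_{x_2\in\mathcal{X}_2}\min_{x_1\in\mathcal{X}_1}U(x_1,x_2),
\end{equation*}
with the outer extrema attained thanks to continuity on compact sets. Let $x_1^{\ast}$ attain the outer minimum on the left and $x_2^{\ast}$ attain the outer maximum on the right. The third step is the standard argument that the equality of the two values forces $(x_1^{\ast},x_2^{\ast})$ to be a saddle point: one shows $U(x_1^{\ast},x_2)\le U(x_1^{\ast},x_2^{\ast})\le U(x_1,x_2^{\ast})$ for all $x_1,x_2$, which is exactly the pair of conditions in \eqref{NE_def} defining a NE. Finally, I would note that $f_2=-f_1=-U$, so the maximizing behavior of $\Sigma_2$ with respect to $U$ coincides with minimizing $f_2$, confirming consistency with \eqref{cost_def}.

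Honestly, there is no real obstacle here: the result is quoted from \cite[Theorem 2.5]{gharesifard2013distributed}, and the only thing to check is that Assumption \ref{asm1} supplies precisely the compactness, convexity, and continuity hypotheses that a minimax theorem requires. The one point deserving a line of care is the passage from convex-concavity of each individual agent cost $f_{1,i}$ to convex-concavity of the average $U$ — this is immediate since a nonnegative combination of convex functions is convex and likewise for concave — and the symmetric remark that Assumption \ref{asm1}(ii)'s requirement that each $f_{2,i}$ be concave-convex is consistent with $f_1+f_2=0$. The proof is therefore essentially a citation plus verification of hypotheses, and I would keep it to a few sentences rather than reproving the minimax theorem.
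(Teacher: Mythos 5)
Your proposal is correct and matches how the paper handles this lemma: the paper offers no in-text argument beyond citing \cite[Theorem 2.5]{gharesifard2013distributed}, which is precisely the saddle-point existence result for convex-concave functions on compact convex sets that you reconstruct via Sion/von Neumann. Your verification of the hypotheses (including joint continuity from the separate Lipschitz conditions and convex-concavity of the average $U$) is exactly the checking the citation implicitly requires, so nothing further is needed.
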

In the two subnetworks, each agent only knows its own cost function. Within each subnetwork, agents can exchange information with their neighbors, i.e., $v_l^j$ can pass information to $v_l^i$ at time $k$ if $v_l^j\in\mathcal{N}_l^i(k)$. Moreover, each subnetwork can also obtain information about the other subnetwork via $\mathcal{G}_{12}(k)$, i.e., $v_{3-l}^j$ can pass the information of $\Sigma_{3-l}$ to $v_l^i$ at time $k$ if $v_{3-l}^j\in\mathcal{N}_{12,l}^i(k)$. For brevity, we use $i\in\mathcal{V}_l$ to represent the agent $v_l^i$ later when there is no confusion. We make the following assumption on the communication networks.
\begin{assumption}\label{asm2}
   For $l = 1,2$, the graph sequence $\mathcal{G}_l(k)$ is uniformly jointly strongly connected (i.e., there exists an positive integer $B_l$ such that $\bigcup_{t=k}^{k + B_l - 1}\mathcal{G}_l(t)$ is strongly connected for $k\ge 0$) and every agent in $\Sigma_l$ has at least one neighbor in $\Sigma_{3-l}$ for all $k$. Furthermore, the associated adjacency matrices $W_l(k) = [w_{l,ij}(k)]$ and $W_{12}(k) = [w_{12,ij}(k)]$ satisfy:\\
   (i) there exists a scalar $\eta\in(0,1)$ such that $w_{l,ij}(k)\ge\eta$ when $j\in\mathcal{N}_l^i(k)$, and $w_{l,ij}(t) = 0$ otherwise; the same for $w_{12,ij}(k)$;\\
  (ii) $\sum_{j = 1}^{n_l}w_{l,ij}(k) = \sum_{i = 1}^{n_l}w_{l,ij}(k) = 1$;\\
  (iii) $\sum_{j = 1}^{n_{3-l}}w_{12,ij}(k) = 1$, $i\in\mathcal{V}_l$.
\end{assumption}
\begin{remark}
The weight rules (i), (ii) and (iii) have been widely used in distrbuted optimization \cite{srivastava2013distributed,nedic2010constrained} and distributed NE seeking \cite{Koshal,lou2015nash}. In addition, the assumption that every agent in $\Sigma_l$ has at least one neighbor in $\Sigma_{3-l}$ for all $k$ is required to ensure that every agent in $\Sigma_l$ can receive the information about $\Sigma_{3-l}$ at every time.
\end{remark}
Suppose that at every time $t$, agent $i$ in $\Sigma_l$ ($l\in\{1,2\}$) maintains an estimate of its subnetwork's action as $x_{l,i}(t)$ and receives the information about its adversarial subnetwork. For clarity, we mainly focus on subnetwork $\Sigma_1$ in the subsequent formulation. At time $t$, each agent $i$ in $\Sigma_1$ receives the information about $\Sigma_2$ from the agents $j\in\mathcal{N}_{12,i}^1(t)$ and forms an estimation for $\Sigma_2$'s action, which is denoted by $u_{2,i}(t) \triangleq \sum_{j\in\mathcal{N}_{12,i}^1(t)}w_{12,ij}(t)x_{2,j}(t)$. Then agent $i$ obtains a cost $f_{1,i}(x_{1,i}(t),u_{2,i}(t))$. After $T$ time steps, the regret of $\Sigma_1$ associated with agent $i$ is defined as
\begin{equation}\label{regret_def}
  R_1^{(i)}(T) = \sum_{t=1}^Tf_1(x_{1,i}(t),u_{2,i}(t)) - \min_{x_1\in\mathcal{X}_1}\sum_{t=1}^Tf_1(x_1,u_{2,i}(t)),
\end{equation}
i.e., the maximum gain $\Sigma_1$ could have achieved by playing the single best fixed action in case the estimated sequence of $\Sigma_2$'s actions $\{u_{2,i}(t)\}_{t=1}^T$ and the cost functions were known in hindsight. An algorithm is no-regret for $\Sigma_1$ if for all $i$, $R_1^{(i)}(T)/T\to 0$ as $T\to\infty$. It is desirable for the subnetworks to adopt a no-regret learning algorithm since no agent wants to realize that the action sequence employed is strictly inferior to taking a fixed action at all iterations. The goal of this paper is to design a no-regret distributed learning algorithm that converges to a NE.
\begin{remark}
  The most relevant results in this context are \cite{srivastava2013distributed} and \cite{talebi2019distributed}. \cite{srivastava2013distributed} proposed a distributed Bregman-distance algorithm for saddle-point problems while we consider a game between two subnetworks. \cite{talebi2019distributed} developed a team-based dual averaging algorithm for a game (not necessarily zero-sum) between two teams over a network and proved the convergence of cost values by introducing cross-monotonicity. In comparison, we provide the regret bound of the algorithm and prove the convergence of the action profiles to a NE.
\end{remark}
\subsection{Distributed Mirror Descent}
For $l = 1,2$, let $\psi_l$ be a continuously differentiable $\sigma_l$-strongly convex function on $\mathcal{X}_l$, which means that,
\[\psi_l(y) \ge \psi_l(x) + \langle \nabla\psi_l(x), y - x\rangle + \frac{\sigma_l}{2}\|x - y\|^2,\ \forall x,y\in\mathcal{X}_l.\]
Recall from \cite{doan2018convergence} that the Bregman divergence associated with $\psi_l$ is defined as
\begin{equation}\label{breg_def}
D_{\psi_l}(x,y) \triangleq \psi_l(x) - \psi_l(y) - \langle\nabla\psi_l(y),x - y\rangle.
\end{equation}
Then we design our algorithm based on the mirror descent algorithm \cite{nemirovskij1983problem,beck2003mirror}. At time $t+1$, each agent $i\in\mathcal{V}_l$ ($l\in\{1,2\}$) receives the estimates $x_{l,j}(t)$ from $j\in\mathcal{N}_l^i(t)$ and the estimates $x_{3-l,j}(t)$ from $j\in\mathcal{N}_{12,l}^i(t)$. Let $v_{l,i}(t)$ and $u_{3-l,i}(t)$ be the weighted average of the estimates from $\Sigma_l$ and $\Sigma_{3-l}$, respectively. Then for $l\in\{1,2\}$, each agent $i\in\mathcal{V}_l$ evaluates the subgradients of the local costs $f_{l,i}$ at $v_{l,i}(t)$by
\begin{equation}\label{sub_gradient}
  \begin{split}
    &g_{1,i}(t)\in\partial_1f_{1,i}(v_{1,i}(t),u_{2,i}(t)),\\
    &g_{2,i}(t)\in\partial_2f_{2,i}(u_{1,i}(t),v_{2,i}(t)),
  \end{split}
\end{equation}
and updates its estimate by the mirror descent scheme \eqref{update_1}. We summarize the procedures in Algorithm \ref{alg1}.
\begin{algorithm}[H]\caption{Distributed mirror descent algorithm}\label{alg1}
{\it Initialize:} For $l\in\{1,2\}$: let $x_{l,i}(0)  \in \mathcal{X}_l$ for each $i \in\mathcal{V}_l$.

{\it Iterate until $t\ge T$:}

\quad{\bf Communication and distributed averaging:} For $l\in\{1,2\}$,
\begin{align}
   v_{l,i}(t) &:= \sum_{j\in\mathcal{N}_{l}^{i}(t)}w_{l,ij}(t)x_{l,j}(t),\quad i\in\mathcal{V}_l\label{commu_v}\\
   u_{l,i}(t) &:= \sum_{j\in\mathcal{N}_{12,3-l}^{i}(t)}w_{12,ij}(t)x_{l,j}(t),\quad i\in\mathcal{V}_{3-l}\label{commu_u}
 \end{align}
\quad{\bf Update of $x_{l,i}(t)$:} For $l\in\{1,2\}$, $i\in\mathcal{V}_l$,\\
\indent $\qquad$$\qquad$receive the subgradients $g_{l,i}(t)$ based on \eqref{sub_gradient}\\
\indent $\qquad$$\qquad$update the estimates $x_{l,i}(t+1)$ by
\begin{align}
x_{l,i}(t+1) &= \arg\min_{x_l\in\mathcal{X}_l}\LARGE\{\langle g_{l,i}(t), x_l - v_{l,i}(t)\rangle\notag\\
&\qquad\quad + \frac{1}{\alpha(t)}D_{\psi_l}(x_l,v_{l,i}(t))\LARGE\},\label{update_1}
\end{align}
where $\{\alpha(t)\}_{t=0}^{\infty}$ is a positive non-increasing sequence.
\end{algorithm}

By Assumption \ref{asm1}(iii) and the definition of the dual norm, we have
\begin{equation}\label{bound_subgradient}
  \|g_{1,i}(t)\|_{\ast}\le L_{1,1},\ \|g_{2,i}(t)\|_{\ast}\le L_{2,1},
\end{equation}
which was frequently used in the convergence analysis of distributed algorithms \cite{lou2015nash,nedic2010constrained}. For the subsequent analysis, we make the following assumption.
\begin{assumption}\label{asm3}
  For $l = 1,2$, the Bregman divergence $D_{\psi_l}(x,y)$ is convex in $y$ and satisfies
  \begin{equation}\label{reciprocity}
    x_k\to x\quad\Rightarrow \quad D_{\psi_l}(x_k,x)\to 0.
  \end{equation}
\end{assumption}
\begin{remark}
  $D_{\psi_l}(\cdot,y)$ is always strictly convex and the assumption about the convexity in $y$ has been widely used in distributed optimization (see e.g., \cite{doan2018convergence,li2016distributed}). The requirement \eqref{reciprocity} in Assumption \ref{asm3} is called the reciprocity of the Bregman divergence, which has also been used in \cite{mertikopoulos2019learning,bravo2018bandit}. In particular, if $\psi_1 = \psi_2 = \frac{1}{2}\|x\|_2^2$, Algorithm \ref{alg1} degenerates to the projected subgradient dynamics in \cite{lou2015nash} and Assumption \ref{asm3} is naturally satisfied.
\end{remark}

%%%%
%%%%-----------------
\section{REGRET ANALYSIS}
We begin this section by establishing preliminary lemmas and then provide an upper bound on each agent's regret.
\subsection{Preliminary Analysis}
First, we state a basic result about the Bregman divergence.
\begin{lem}\label{lem2}
  Let $\psi$ be a continuously differentiable $\sigma$-strongly convex function on $\mathcal{X}$. Then the Bregman divergence defined by \eqref{breg_def} satisfies
  \begin{align}
    D_{\psi}(y,x) - D_{\psi}(y,z) - D_{\psi}(z,x) &= \langle\nabla\psi(z) - \nabla\psi(x),y-z\rangle,\label{breg_prop1}\\
    D_{\psi}(x,y)&\ge \frac{\sigma}{2}\|x - y\|^2,\label{breg_prop2}
  \end{align}
  for all $x,y,z\in\mathcal{X}$.
\end{lem}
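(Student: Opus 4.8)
The plan is to prove both claims by direct substitution of the definition \eqref{breg_def} of the Bregman divergence, so no machinery beyond algebra and the strong-convexity inequality is needed.

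For the identity \eqref{breg_prop1}, I would write out each of the three terms $D_{\psi}(y,x)$, $D_{\psi}(y,z)$, $D_{\psi}(z,x)$ using $D_{\psi}(a,b) = \psi(a) - \psi(b) - \langle\nabla\psi(b), a-b\rangle$. Forming the combination $D_{\psi}(y,x) - D_{\psi}(y,z) - D_{\psi}(z,x)$, the function values $\psi(y)$, $\psi(z)$, $\psi(x)$ cancel in pairs, leaving only the three inner-product terms $-\langle\nabla\psi(x),y-x\rangle + \langle\nabla\psi(z),y-z\rangle + \langle\nabla\psi(x),z-x\rangle$. Collecting the two terms carrying $\nabla\psi(x)$ gives $\langle\nabla\psi(x), z-x-y+x\rangle = \langle\nabla\psi(x), z-y\rangle = -\langle\nabla\psi(x), y-z\rangle$, and adding $\langle\nabla\psi(z),y-z\rangle$ yields $\langle\nabla\psi(z)-\nabla\psi(x), y-z\rangle$, as required. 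The only thing to be careful about is the sign bookkeeping in the three expansions.

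For the inequality \eqref{breg_prop2}, I would invoke the defining property of $\sigma$-strong convexity of $\psi$ on $\mathcal{X}$, namely $\psi(x) \ge \psi(y) + \langle\nabla\psi(y), x-y\rangle + \frac{\sigma}{2}\|x-y\|^2$ for all $x,y\in\mathcal{X}$, and rearrange it to $\psi(x) - \psi(y) - \langle\nabla\psi(y), x-y\rangle \ge \frac{\sigma}{2}\|x-y\|^2$; the left-hand side is exactly $D_{\psi}(x,y)$ by \eqref{breg_def}.

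Since both parts reduce to elementary manipulations of the definitions, I do not anticipate any genuine obstacle; the proof is short and self-contained, and the most error-prone point is merely tracking the signs when expanding the three Bregman terms in \eqref{breg_prop1}.
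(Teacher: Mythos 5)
Your proof is correct, and it is essentially the argument the paper has in mind: the paper simply remarks that Lemma~\ref{lem2} "can be easily obtained from \eqref{breg_def}," which is precisely your direct expansion of the three Bregman terms for \eqref{breg_prop1} and the rearrangement of the $\sigma$-strong convexity inequality for \eqref{breg_prop2}. The sign bookkeeping in your expansion checks out, so there is nothing to add.
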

Lemma \ref{lem2} is widely used in the analysis of mirror descent algorithms \cite{doan2018convergence,shahrampour2017distributed} and can be easily obtained from \eqref{breg_def}. We next state a result from distributed optimization \cite{nedic2010constrained}.
\begin{lem}\label{lem3}
  Let $\Phi_l(t,s) = W_l(t)W_l(t-1)\cdots W_l(s)$ ($l = 1,2$) be the transition matrices for $\Sigma_l$ ($l = 1,2$). Suppose that Assumption \ref{asm2} holds. Then for all $t,s$ with $t\ge s\ge 0$, we have
  \[\bigg|[\Phi_l(t,s)]_{ij} - \frac{1}{n_l}\bigg| \le \Gamma_l\theta_l^{t-s},\quad l = 1,2\]
  where $\Gamma_l = (1 - \eta/4n_l^2)^{-2}$ and $\theta_l = (1 - \eta/4n_l^2)^{1/B_l}$.
\end{lem}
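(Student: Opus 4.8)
The plan is to recognize this as a standard ergodicity estimate for products of doubly stochastic matrices under uniform joint connectivity, and to follow the argument of \cite{nedic2010constrained} (cf.\ also \cite{nedic2009subgradient}). There are three ingredients. First, since each $W_l(k)$ is doubly stochastic (Assumption~\ref{asm2}(ii)), with self-weights positive by the standard convention $i\in\mathcal N_l^i(k)$ so that every present edge has weight $\ge\eta$ (Assumption~\ref{asm2}(i)), every product $\Phi_l(t,s)$ is again doubly stochastic; and, crucially, $\Phi_l(t,s)-\tfrac{1}{n_l}\mathbf 1\mathbf 1^\top=\prod_{k=s}^{t}\bigl(W_l(k)-\tfrac{1}{n_l}\mathbf 1\mathbf 1^\top\bigr)$, because $\mathbf 1\mathbf 1^\top W_l(k)=W_l(k)\mathbf 1\mathbf 1^\top=\mathbf 1\mathbf 1^\top$ and $(\mathbf 1\mathbf 1^\top)^2=n_l\,\mathbf 1\mathbf 1^\top$. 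Hence the entrywise deviation $[\Phi_l(t,s)]_{ij}-1/n_l$ is governed by this product, and it suffices to show the product contracts geometrically over windows of length $\Theta(B_l)$.

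Second, I would establish a uniform positive lower bound on the entries of a sufficiently long window product. Using Assumption~\ref{asm2} (the union of the graphs over any window of length $B_l$ is strongly connected, and self-loops are always present with weight $\ge\eta$), any node can propagate its value to any other within $(n_l-1)B_l$ steps by traversing one edge of a connecting path per sub-window and idling via its self-loop otherwise; hence every entry of $\Phi_l(k+(n_l-1)B_l-1,\,k)$ is at least some $\gamma_l>0$. Third, I would invoke the contraction property: a stochastic matrix all of whose entries exceed $\gamma_l$ has ergodicity coefficient at most $1-n_l\gamma_l$, so it shrinks the spread of any column about its mean by that factor; chaining this estimate over $\lfloor (t-s)/((n_l-1)B_l)\rfloor$ consecutive windows yields geometric decay, and absorbing the leftover partial window into the prefactor produces a bound of the claimed shape $\Gamma_l\theta_l^{\,t-s}$.

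The main obstacle is not the structure of the argument but the bookkeeping needed to recover the precise constants $\Gamma_l=(1-\eta/4n_l^2)^{-2}$ and $\theta_l=(1-\eta/4n_l^2)^{1/B_l}$: the crude walk-counting just described yields only a rate of the form $(1-\eta^{(n_l-1)B_l})^{1/((n_l-1)B_l)}$, whereas the sharp exponent $1/B_l$ and the $4n_l^2$ denominator require the finer averaging estimate of \cite{nedic2010constrained}. Since the statement is quoted verbatim from that reference under exactly Assumption~\ref{asm2}, it is legitimate to cite it directly; reproducing the optimized constants would merely mean repeating their computation, which I would not carry out here.
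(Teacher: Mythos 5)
Your resolution matches the paper's own treatment: Lemma~\ref{lem3} is stated there without proof, quoted verbatim (same constants $\Gamma_l$, $\theta_l$, same Assumption~\ref{asm2}) as a known result from \cite{nedic2010constrained}, so deferring to that reference is exactly what the paper does. Your auxiliary sketch---telescoping the product of $W_l(k)-\tfrac{1}{n_l}\mathbf{1}\mathbf{1}^\top$, an entrywise lower bound over windows of length $(n_l-1)B_l$, and a Dobrushin-type contraction---is a sound outline of the standard ergodicity argument, and you correctly note that it yields only weaker constants, the sharp $1/B_l$ exponent and $4n_l^2$ denominator coming from the finer analysis in the cited work.
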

Based on \eqref{update_1}, we can establish an error bound between the weighted estimate $v_{l,i}(t)$ and the new estimate $x_{l,i}(t+1)$.
\begin{lem}\label{lem4}
  Let Assumption \ref{asm1} hold. Suppose that $v_{l,i}(t)$ and $x_{l,i}(t+1)$ are generated by Algorithm \ref{alg1}. Then for each $l\in\{1,2\}$ and $i\in\mathcal{V}_l$, we have
  \begin{equation}\label{lem_bound_1}
    \|v_{l,i}(t) - x_{l,i}(t+1)\| \le \frac{\alpha(t)}{\sigma_l}\|g_{l,i}(t)\|_{\ast} \le L_{l,1}\frac{\alpha(t)}{\sigma_l}.
  \end{equation}
\end{lem}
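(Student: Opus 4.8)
The plan is to exploit the first-order optimality (variational inequality) characterization of the prox-type step~\eqref{update_1} together with the strong convexity of $\psi_l$. Fix $l$, $i$ and $t$, and to lighten notation write $v \triangleq v_{l,i}(t)$, $x^+ \triangleq x_{l,i}(t+1)$, $g \triangleq g_{l,i}(t)$, $\alpha \triangleq \alpha(t)$, $\psi \triangleq \psi_l$ and $\sigma \triangleq \sigma_l$. Since $g$ is a fixed vector and $\psi$ is continuously differentiable, the objective $x \mapsto \langle g, x - v\rangle + \frac{1}{\alpha}D_{\psi}(x,v)$ in~\eqref{update_1} is convex and differentiable on the convex set $\mathcal{X}_l$ (in fact strictly convex, by Assumption~\ref{asm3}), so $x^+$ is its unique minimizer and, using $\nabla_x D_{\psi}(x,v) = \nabla\psi(x) - \nabla\psi(v)$, the optimality condition reads
\begin{equation*}
  \Big\langle g + \tfrac{1}{\alpha}\big(\nabla\psi(x^+) - \nabla\psi(v)\big),\, x - x^+\Big\rangle \ge 0 \quad \text{for all } x\in\mathcal{X}_l .
\end{equation*}

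Next I would specialize this inequality to the feasible point $x = v\in\mathcal{X}_l$ and rearrange, obtaining
\begin{equation*}
  \langle g,\, v - x^+\rangle \;\ge\; \tfrac{1}{\alpha}\,\big\langle \nabla\psi(x^+) - \nabla\psi(v),\, x^+ - v\big\rangle .
\end{equation*}
Then I would bound the two sides separately. For the right-hand side, the $\sigma$-strong convexity of $\psi$ gives the strong-monotonicity estimate $\langle \nabla\psi(x^+) - \nabla\psi(v),\, x^+ - v\rangle \ge \sigma\|x^+ - v\|^2$ (equivalently, this follows by summing the two instances of the definition~\eqref{breg_def} for $D_{\psi}(v,x^+)$ and $D_{\psi}(x^+,v)$ and applying~\eqref{breg_prop2} of Lemma~\ref{lem2}). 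For the left-hand side, the definition of the dual norm yields $\langle g,\, v - x^+\rangle \le \|g\|_{\ast}\,\|v - x^+\|$. Combining these three displays gives
\begin{equation*}
  \tfrac{\sigma}{\alpha}\,\|x^+ - v\|^2 \;\le\; \|g\|_{\ast}\,\|v - x^+\| ,
\end{equation*}
and dividing through by $\|v - x^+\|$ (the claim being trivial when $v = x^+$) produces $\|v - x^+\| \le \frac{\alpha}{\sigma}\|g\|_{\ast}$, which is the first inequality in~\eqref{lem_bound_1}. The second inequality is then immediate from the subgradient bound~\eqref{bound_subgradient}, since $\|g_{l,i}(t)\|_{\ast}\le L_{l,1}$.

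I do not expect any genuine obstacle here: the argument is a routine prox-step estimate standard in mirror-descent analysis. The only point deserving a little care is writing the optimality condition for the constrained minimization correctly — in terms of $\nabla\psi$, with the feasible direction $x - x^+$ and then the specific choice $x = v$ — after which everything reduces to a combination of strong convexity and the Cauchy--Schwarz/dual-norm inequality.
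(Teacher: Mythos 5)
Your proposal is correct and follows essentially the same route as the paper's proof: write the optimality condition of the prox step \eqref{update_1} in terms of $\nabla\psi_l$, specialize it to $x_l = v_{l,i}(t)$, then combine the dual-norm (Cauchy--Schwarz) bound with the $\sigma_l$-strong monotonicity of $\nabla\psi_l$ and finally invoke \eqref{bound_subgradient}. The only cosmetic difference is your parenthetical appeal to Assumption \ref{asm3} for strict convexity of the objective, which is unnecessary (strong convexity of $\psi_l$ already suffices) but harmless.
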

Lemma \ref{lem4} is a consequence of the optimality condition. See Appendix for its proof. Furthermore, let $\bar{x}_l(t) = \frac{1}{n_l}\sum_{i=1}^{n_l}x_{l,i}(t)$ be the average state of $\Sigma_l$ at time $t$, and then we obtain the following error bounds based on Lemma \ref{lem3} and Lemma \ref{lem4}.
\begin{lem}\label{lem5}
  Let Assumptions \ref{asm1}-\ref{asm2} hold. Suppose that $x_{l,i}(t)$, $v_{l,i}(t)$, and $u_{l,i}(t)$ are generated by Algorithm \ref{alg1}. Then, for each $l\in\{1,2\}$ and $i\in\mathcal{V}_l$, for any $t$,
  \begin{align}
    \|x_{l,i}(t) - \bar{x}_{l}(t)\| &\le H_l(t),\label{lem_bound_2}\\
    \|\bar{x}_{l}(t) - v_{l,i}(t)\| &\le H_l(t),\label{lem_bound_3}\\
    \|\bar{x}_{l}(t) - u_{l,i}(t)\| &\le H_l(t),\label{lem_bound_4}
  \end{align}
  where
\begin{align*}
  H_l(t) &= n_l\Gamma_l\theta_l^{t-1}\Lambda_l + \frac{2}{\sigma_l}L_{l,1}\alpha(t-1)\\
  &\quad + \frac{1}{\sigma_l}\left(n_lL_{l,1}\Gamma_l\sum_{s=1}^{t-1}\theta_l^{t-1-s}\alpha(s-1)\right),
\end{align*}
and $\Lambda_l\triangleq \max_{i\in\mathcal{V}_l}\|x_{l,i}(0)\|$.
\end{lem}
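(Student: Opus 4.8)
The plan is to track the evolution of the agents' estimates through the consensus step and express $x_{l,i}(t)$ via the transition matrices $\Phi_l$, then invoke the geometric mixing bound of Lemma \ref{lem3} together with the per-step perturbation bound of Lemma \ref{lem4}. First I would rewrite the update in a form amenable to the transition-matrix analysis: set $e_{l,i}(t) \triangleq x_{l,i}(t+1) - v_{l,i}(t)$, so that by \eqref{commu_v} we have $x_{l,i}(t+1) = \sum_{j\in\mathcal{N}_l^i(t)} w_{l,ij}(t) x_{l,j}(t) + e_{l,i}(t)$, i.e. in stacked form $X_l(t+1) = W_l(t) X_l(t) + E_l(t)$. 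Unrolling this recursion from the initial condition gives
\[
x_{l,i}(t) = \sum_{j=1}^{n_l} [\Phi_l(t-1,0)]_{ij}\, x_{l,j}(0) + \sum_{s=1}^{t-1} \sum_{j=1}^{n_l} [\Phi_l(t-1,s)]_{ij}\, e_{l,j}(s-1) + e_{l,i}(t-1).
\]
By Assumption \ref{asm2}(ii) the matrices $W_l(t)$ are doubly stochastic, so $\bar x_l(t) = \frac{1}{n_l}\sum_{j=1}^{n_l} x_{l,j}(0) + \frac{1}{n_l}\sum_{s=1}^{t-1}\sum_{j=1}^{n_l} e_{l,j}(s-1) + \frac{1}{n_l}\sum_{j=1}^{n_l} e_{l,j}(t-1)$.

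Next I would subtract these two identities and estimate term by term. The difference of the initial-condition terms is bounded using $\big|[\Phi_l(t-1,0)]_{ij} - \frac{1}{n_l}\big| \le \Gamma_l \theta_l^{t-1}$ from Lemma \ref{lem3} and $\|x_{l,j}(0)\| \le \Lambda_l$, yielding the term $n_l \Gamma_l \theta_l^{t-1}\Lambda_l$. For each history term with index $s$, the same mixing bound gives a factor $\Gamma_l \theta_l^{t-1-s}$ while Lemma \ref{lem4} gives $\|e_{l,j}(s-1)\| \le L_{l,1}\alpha(s-1)/\sigma_l$; summing over $j$ produces the factor $n_l$, and summing over $s$ from $1$ to $t-1$ gives the convolution term $\frac{1}{\sigma_l} n_l L_{l,1}\Gamma_l \sum_{s=1}^{t-1}\theta_l^{t-1-s}\alpha(s-1)$. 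The last term $e_{l,i}(t-1) - \frac{1}{n_l}\sum_j e_{l,j}(t-1)$ is bounded by $2\max_j\|e_{l,j}(t-1)\| \le \frac{2}{\sigma_l}L_{l,1}\alpha(t-1)$ via the triangle inequality. Adding these three contributions yields \eqref{lem_bound_2} with the stated $H_l(t)$.

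Finally, \eqref{lem_bound_3} and \eqref{lem_bound_4} follow from \eqref{lem_bound_2} by convexity of norms: since $v_{l,i}(t) = \sum_{j\in\mathcal{N}_l^i(t)} w_{l,ij}(t) x_{l,j}(t)$ is a convex combination (weights sum to $1$ by Assumption \ref{asm2}(ii)), we get $\|\bar x_l(t) - v_{l,i}(t)\| = \big\|\sum_j w_{l,ij}(t)(\bar x_l(t) - x_{l,j}(t))\big\| \le \sum_j w_{l,ij}(t) H_l(t) = H_l(t)$, and the same argument applies to $u_{l,i}(t)$ using Assumption \ref{asm2}(iii), noting that $u_{l,i}(t)$ is a convex combination of estimates $x_{l,j}(t)$ of agents in $\Sigma_l$ so that $\bar x_l(t)$ is again the right centering point. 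The main obstacle is bookkeeping the index shifts correctly — the subgradient step uses step-size $\alpha(t-1)$ when forming $x_{l,i}(t)$, so the perturbation entering $x_{l,i}(t)$ is $e_{l,i}(t-1)$ of size $O(\alpha(t-1))$, and one must be careful that the most recent perturbation does not get the mixing discount $\theta_l$ (it appears with coefficient close to $1$, hence the separate $\frac{2}{\sigma_l}L_{l,1}\alpha(t-1)$ term rather than being folded into the sum). Everything else is a routine application of the triangle inequality and the two cited lemmas.
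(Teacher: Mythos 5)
Your proposal is correct and follows essentially the same route as the paper's proof: define the per-step perturbation $x_{l,i}(t+1)-v_{l,i}(t)$, unroll the recursion via the transition matrices, bound the three pieces using Lemma \ref{lem3} (geometric mixing), Lemma \ref{lem4} (perturbation size), and a triangle inequality for the most recent perturbation, then obtain \eqref{lem_bound_3}--\eqref{lem_bound_4} from \eqref{lem_bound_2} by the convex-combination (row-stochasticity) argument. The index bookkeeping you flag is handled exactly as in the paper, so no changes are needed.
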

In Lemma \ref{lem5}, \eqref{lem_bound_2} is a fundamental result in the analysis of distributed mirror descent algorithms \cite{doan2018convergence,li2016distributed} and \eqref{lem_bound_3}-\eqref{lem_bound_4} can be easily established based on \eqref{lem_bound_2}. For completeness, we give its proof in Appendix. Finally, we establish the following result similar to that of \cite{shahrampour2017distributed,li2016distributed} for distributed optimization.
\begin{lem}\label{lem6}
  Let Assumptions \ref{asm1}-\ref{asm3} hold. Suppose that $x_{l,i}(t)$ and $g_{l,i}(t)$ are generated by Algorithm \ref{alg1}. Then, for $l\in\{1,2\}$, $i\in\mathcal{V}_l$, for any $\breve{x}_l\in\mathcal{X}_l$, there exists $\Upsilon_l\triangleq\max\{D_{\psi_l}(\breve{x}_l, x_l)\mid \forall x_l\in\mathcal{X}_l\}$ such that
  \begin{equation}\label{lem_bound_5}
    \frac{1}{n_l}\sum_{t=1}^T\sum_{i=1}^{n_l}\langle g_{l,i}(t), x_{l,i}(t+1) - \breve{x}_l\rangle \le \frac{\Upsilon_l^2}{\alpha(T)}.
  \end{equation}
\end{lem}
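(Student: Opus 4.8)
The plan is to derive a per-agent, per-step inequality from the optimality condition for the update \eqref{update_1}, aggregate it over the agents of $\Sigma_l$ using Assumption \ref{asm3} together with the column stochasticity of $W_l(t)$, and then telescope over $t$. Since the objective in \eqref{update_1} is strictly convex and $\mathcal{X}_l$ is convex and compact (Assumption \ref{asm1}(i)), the minimizer $x_{l,i}(t+1)$ is characterized, exactly as in the proof of Lemma \ref{lem4}, by the first-order condition; using $\nabla_x D_{\psi_l}(x,v)=\nabla\psi_l(x)-\nabla\psi_l(v)$, this reads $\langle g_{l,i}(t)+\tfrac{1}{\alpha(t)}(\nabla\psi_l(x_{l,i}(t+1))-\nabla\psi_l(v_{l,i}(t))),\,\breve{x}_l-x_{l,i}(t+1)\rangle\ge 0$ for every $\breve{x}_l\in\mathcal{X}_l$. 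Rearranging and then applying the three-point identity \eqref{breg_prop1} with $y=\breve{x}_l$, $z=x_{l,i}(t+1)$, $x=v_{l,i}(t)$ gives
\[
\langle g_{l,i}(t),x_{l,i}(t+1)-\breve{x}_l\rangle\le\tfrac{1}{\alpha(t)}\big[D_{\psi_l}(\breve{x}_l,v_{l,i}(t))-D_{\psi_l}(\breve{x}_l,x_{l,i}(t+1))-D_{\psi_l}(x_{l,i}(t+1),v_{l,i}(t))\big],
\]
and the last term, being nonnegative, is dropped.

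Next I would sum this over $i\in\mathcal{V}_l$. Here Assumption \ref{asm3} enters: since $v_{l,i}(t)=\sum_j w_{l,ij}(t)x_{l,j}(t)$ and $D_{\psi_l}(\breve{x}_l,\cdot)$ is convex, $\sum_i D_{\psi_l}(\breve{x}_l,v_{l,i}(t))\le\sum_i\sum_j w_{l,ij}(t)D_{\psi_l}(\breve{x}_l,x_{l,j}(t))=\sum_j D_{\psi_l}(\breve{x}_l,x_{l,j}(t))$, the last equality using $\sum_i w_{l,ij}(t)=1$ (Assumption \ref{asm2}(ii)). Setting $S(t)\triangleq\sum_{i=1}^{n_l}D_{\psi_l}(\breve{x}_l,x_{l,i}(t))\ge 0$, this collapses the aggregate to $\frac{1}{n_l}\sum_{t=1}^{T}\sum_{i=1}^{n_l}\langle g_{l,i}(t),x_{l,i}(t+1)-\breve{x}_l\rangle\le\frac{1}{n_l}\sum_{t=1}^{T}\frac{1}{\alpha(t)}(S(t)-S(t+1))$. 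A summation-by-parts step then finishes the argument: since $\{\alpha(t)\}$ is positive and non-increasing, $\{1/\alpha(t)\}$ is non-decreasing, so the right-hand side equals $\frac{1}{n_l}\big(\frac{S(1)}{\alpha(1)}+\sum_{t=2}^{T}S(t)(\frac{1}{\alpha(t)}-\frac{1}{\alpha(t-1)})-\frac{S(T+1)}{\alpha(T)}\big)$; dropping the nonpositive last term and bounding each $S(t)\le n_l\Upsilon_l$ with $\Upsilon_l=\max_{x_l\in\mathcal{X}_l}D_{\psi_l}(\breve{x}_l,x_l)$ (finite by compactness of $\mathcal{X}_l$ and continuity of $\psi_l$), the remaining bracket telescopes to $n_l\Upsilon_l/\alpha(T)$, so that after dividing by $n_l$ we reach the bound claimed in \eqref{lem_bound_5}.

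I expect the only genuine obstacle to be the aggregation/telescoping step rather than the optimality-condition bookkeeping, which is by now standard. The Bregman terms $D_{\psi_l}(\breve{x}_l,v_{l,i}(t))$ and $D_{\psi_l}(\breve{x}_l,x_{l,i}(t+1))$ do not telescope in $t$ on their own, because $v_{l,i}(t)\ne x_{l,i}(t)$ in general; one must first exploit convexity in the second argument together with double stochasticity to trade $v_{l,i}(t)$ for the $x_{l,j}(t)$, and only then does the monotonicity of $1/\alpha(t)$ let the summation-by-parts close with the clean telescoping constant $1/\alpha(T)$.
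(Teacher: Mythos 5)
Your proposal is correct and follows essentially the same route as the paper's own proof: the first-order optimality condition of \eqref{update_1}, the three-point identity \eqref{breg_prop1}, dropping the nonnegative term $D_{\psi_l}(x_{l,i}(t+1),v_{l,i}(t))$, Jensen's inequality with the double stochasticity of $W_l(t)$ to trade $v_{l,i}(t)$ for the $x_{l,j}(t)$, and Abel summation using the monotonicity of $1/\alpha(t)$. The only discrepancy is cosmetic: with $\Upsilon_l\triangleq\max_{x_l\in\mathcal{X}_l}D_{\psi_l}(\breve{x}_l,x_l)$ your argument (correctly) yields $\Upsilon_l/\alpha(T)$, whereas the paper writes $\Upsilon_l^2/\alpha(T)$, a squaring inconsistency already present in the paper's own proof rather than a gap in yours.
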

By the optimality condition and \eqref{breg_prop1}, we obtain \eqref{lem_bound_5}. The detailed proof is provided in Appendix.
\subsection{Regret Bound}
Equipped with the above lemmas, we are ready to provide a regret bound of Algorithm 1.
\begin{thm}\label{thm1}
  Under Assumptions \ref{asm1}-\ref{asm3}, the regret defined by \eqref{regret_def} can be bounded as
  \begin{align}
    R_1^{(i)}(T)&\le \frac{4}{\sigma_1}\left(\frac{n_1L_{1,1}^2\Gamma_1}{1 - \theta_1} + 2L_{1,1}^2\right)\sum_{t=1}^T\alpha(t-1) + \frac{\Upsilon_l^2}{\alpha(T)}\notag\\
    &\quad + \frac{12}{\sigma_2}\left(\frac{n_2L_{1,2}L_{2,1}\Gamma_2}{1 - \theta_2} + 2L_{1,2}L_{2,1}\right)\sum_{t=1}^T\alpha(t-1)\notag\\
    &\quad + 4\sum_{t=1}^T\sum_{l=1}^2n_l\Gamma_l\theta_l^{t-1}\Lambda_l + \sum_{t=1}^T\frac{\alpha(t)}{\sigma_1}L_{1,1}^2.\label{regret_bound_final}
  \end{align}
\end{thm}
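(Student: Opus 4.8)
The plan is the following. Fix $i\in\mathcal V_1$ and let $\breve x_1\in\arg\min_{x_1\in\mathcal X_1}\sum_{t=1}^T f_1(x_1,u_{2,i}(t))$ be the offline comparator, so that $R_1^{(i)}(T)=\frac{1}{n_1}\sum_{t=1}^T\sum_{j=1}^{n_1}\big(f_{1,j}(x_{1,i}(t),u_{2,i}(t))-f_{1,j}(\breve x_1,u_{2,i}(t))\big)$ after expanding $f_1$ via \eqref{cost_def}. For each $(t,j)$ I would insert the point $(v_{1,j}(t),u_{2,j}(t))$ at which the algorithm's subgradient $g_{1,j}(t)$ of \eqref{sub_gradient} is taken, writing the summand as
\[\big(f_{1,j}(x_{1,i}(t),u_{2,i}(t))-f_{1,j}(v_{1,j}(t),u_{2,j}(t))\big)+\big(f_{1,j}(v_{1,j}(t),u_{2,j}(t))-f_{1,j}(\breve x_1,u_{2,j}(t))\big)+\big(f_{1,j}(\breve x_1,u_{2,j}(t))-f_{1,j}(\breve x_1,u_{2,i}(t))\big).\]
The first and third brackets are consensus errors: by Assumption \ref{asm1}(iii) together with \eqref{lem_bound_2}, \eqref{lem_bound_3}, \eqref{lem_bound_4} (through $\|x_{1,i}(t)-v_{1,j}(t)\|\le\|x_{1,i}(t)-\bar x_1(t)\|+\|\bar x_1(t)-v_{1,j}(t)\|\le 2H_1(t)$ and similarly $\|u_{2,i}(t)-u_{2,j}(t)\|\le 2H_2(t)$), they are at most $2L_{1,1}H_1(t)+2L_{1,2}H_2(t)$ and $2L_{1,2}H_2(t)$, respectively. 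The middle bracket is linearized by convexity of $f_{1,j}(\cdot,u_{2,j}(t))$ in its first argument (Assumption \ref{asm1}(ii)): it is at most $\langle g_{1,j}(t),v_{1,j}(t)-\breve x_1\rangle$, all the relevant points lying in $\mathcal X_1$ by Assumption \ref{asm1}(i) and Assumption \ref{asm2}.

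I would then split $\langle g_{1,j}(t),v_{1,j}(t)-\breve x_1\rangle=\langle g_{1,j}(t),v_{1,j}(t)-x_{1,j}(t+1)\rangle+\langle g_{1,j}(t),x_{1,j}(t+1)-\breve x_1\rangle$. The first term is bounded by $\|g_{1,j}(t)\|_{\ast}\|v_{1,j}(t)-x_{1,j}(t+1)\|\le L_{1,1}^2\alpha(t)/\sigma_1$ using Lemma \ref{lem4} and \eqref{bound_subgradient}. Averaging the second term over $j$ and summing over $t$ gives exactly the left-hand side of \eqref{lem_bound_5} with $l=1$ and $\breve x_l=\breve x_1$, hence it is at most $\Upsilon_1^2/\alpha(T)$ (finite since $\mathcal X_1$ is compact and $D_{\psi_1}$ continuous). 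Putting the three brackets together and summing over $t$ yields a bound of the form $R_1^{(i)}(T)\le c_1\sum_{t=1}^T H_1(t)+c_2\sum_{t=1}^T H_2(t)+\frac{L_{1,1}^2}{\sigma_1}\sum_{t=1}^T\alpha(t)+\frac{\Upsilon_1^2}{\alpha(T)}$, where a naive accounting gives $c_1=2L_{1,1}$ and $c_2=4L_{1,2}$; the (looser) values implicit in \eqref{regret_bound_final} arise from not optimizing the triangle inequalities.

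The last ingredient is the bound on $\sum_{t=1}^T H_l(t)$ ($l=1,2$), obtained by substituting the expression for $H_l(t)$ from Lemma \ref{lem5}: the geometric term contributes $\sum_{t=1}^T n_l\Gamma_l\theta_l^{t-1}\Lambda_l$, the term $\frac{2L_{l,1}}{\sigma_l}\alpha(t-1)$ contributes $\frac{2L_{l,1}}{\sigma_l}\sum_{t=1}^T\alpha(t-1)$, and the convolution term is handled by interchanging the order of summation and using $\theta_l\in(0,1)$:
\[\sum_{t=1}^T\sum_{s=1}^{t-1}\theta_l^{t-1-s}\alpha(s-1)=\sum_{s=1}^{T-1}\alpha(s-1)\sum_{r=0}^{T-1-s}\theta_l^{r}\le\frac{1}{1-\theta_l}\sum_{s=1}^T\alpha(s-1),\]
so that $\sum_{t=1}^T H_l(t)\le\sum_{t=1}^T n_l\Gamma_l\theta_l^{t-1}\Lambda_l+\frac{1}{\sigma_l}\big(\tfrac{n_lL_{l,1}\Gamma_l}{1-\theta_l}+2L_{l,1}\big)\sum_{t=1}^T\alpha(t-1)$. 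Substituting this into the previous display and collecting the $\alpha$-sums, the $\theta^{t-1}$-sums, the $\alpha(t)$-sum, and the $\Upsilon_1^2/\alpha(T)$ term produces \eqref{regret_bound_final}.

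The main obstacle is the bookkeeping in this chain of insertions: the subgradient $g_{1,j}(t)$ is evaluated neither at the point $(x_{1,i}(t),u_{2,i}(t))$ where the regret is measured nor at any network average, so one must travel between these points through several Lipschitz estimates, and the adversary's estimates $u_{2,j}(t)$ — formed with the merely row-stochastic weights $W_{12}(t)$ — must be tracked through the consensus bound \eqref{lem_bound_4} rather than any exact averaging identity. A secondary point needing a remark is that the comparator $\breve x_1$ depends on the realized sequence $\{u_{2,i}(t)\}_{t=1}^T$; this is harmless, since Lemma \ref{lem6} is applied with this single fixed $\breve x_1$ and the dependence enters only through values $f_{1,j}(\breve x_1,\cdot)$ that are again controlled by $H_2(t)$.
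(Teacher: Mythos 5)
Your proposal is correct and takes essentially the same route as the paper's proof: the same ingredients (the consensus bounds of Lemma \ref{lem5} combined with Assumption \ref{asm1}(iii), linearization by convexity at the point $(v_{1,j}(t),u_{2,j}(t))$ where $g_{1,j}(t)$ is evaluated, the split of $\langle g_{1,j}(t),v_{1,j}(t)-\breve x_1\rangle$ handled by Lemma \ref{lem4} and Lemma \ref{lem6}, and the interchange of summation for the convolution term) play the same roles, the only difference being that you insert $(v_{1,j}(t),u_{2,j}(t))$ directly rather than routing through the averages $\bar x_1(t),\bar x_2(t)$ as in \eqref{regret_2}--\eqref{C_1}, which merely yields slightly tighter constants ($2L_{1,1}$, $4L_{1,2}$ in place of the paper's $4L_{1,1}$, $4L_{1,2}$) that are absorbed by \eqref{regret_bound_final}. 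One bookkeeping remark: under either accounting the transient term actually carries the Lipschitz factors, e.g.\ $\sum_{t=1}^T\big(4L_{1,1}n_1\Gamma_1\theta_1^{t-1}\Lambda_1+4L_{1,2}n_2\Gamma_2\theta_2^{t-1}\Lambda_2\big)$, which the paper's own step \eqref{regret_bound_2} and the statement \eqref{regret_bound_final} silently drop; this shared cosmetic discrepancy affects only a finite geometric-sum constant and not the order of the regret bound.
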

\begin{proof}
Denote by $\bar{x}_1(t) = \frac{1}{n_1}\sum_{i=1}^{n_1}x_{1,i}(t)$ and $x_1^{\circ} = \arg\min_{x_1\in\mathcal{X}_1}\sum_{t=1}^Tf_1(x_1,u_{2,i}(t))$. Recalling from Assumption \ref{asm1}(iii) that $f_{1,i}(x_1,x_2)$ is $L_{1,1}$-Lipschitz continuous in $x_1\in\mathcal{X}_1$ for any $x_2\in\mathcal{X}_2$, we have
\begin{align}
  R_1^{(i)}(T)&=\sum_{t=1}^T\left(f_1(x_{1,i}(t),u_{2,i}(t)) - f_1(x_1^{\circ},u_{2,i}(t))\right)\notag\\
  &= \sum_{t=1}^T\Big(f_1(x_{1,i}(t),u_{2,i}(t)) - f_1(\bar{x}_1(t),u_{2,i}(t))\notag\\
  &\quad + f_1(\bar{x}_1(t),u_{2,i}(t)) - f_1(x_1^{\circ},u_{2,i}(t))\Big)\notag\\
  &\le \sum_{t=1}^T\left(\underbrace{f_1(\bar{x}_1(t),u_{2,i}(t)) - f_1(x_1^{\circ},u_{2,i}(t))}_{A_{1,i}(t)}\right)\notag\\
  &\quad + L_{1,1}\sum_{t=1}^T\|x_{1,i}(t) - \bar{x}_1(t)\|,\label{regret_2}
\end{align}
Similarly, by Assumption \ref{asm1}(iii),
\begin{align}
  A_{1,i}(t) &= f_1(\bar{x}_1(t),u_{2,i}(t)) - f_1(\bar{x}_1(t),\bar{x}_2(t))\notag\\
  &\quad + f_1(\bar{x}_1(t),\bar{x}_2(t)) - f_1(x_1^{\circ},\bar{x}_2(t))\notag\\
  &\quad + f_1(x_1^{\circ},\bar{x}_2(t)) - f_1(x_1^{\circ},u_{2,i}(t))\notag\\
  &\le 2L_{1,2}\|u_{2,i}(t) - \bar{x}_2(t)\|\notag\\
  &\quad + \underbrace{f_1(\bar{x}_1(t),\bar{x}_2(t)) - f_1(x_1^{\circ},\bar{x}_2(t))}_{B_1(t)}.\label{A_1}
\end{align}
As a result,
\begin{align}
  R_1^{(i)}(T) &\le \sum_{t=1}^T\left(2L_{1,2}\|u_{2,i}(t) - \bar{x}_2(t)\| + L_{1,1}\|x_{1,i}(t) - \bar{x}_1(t)\|\right)\notag\\
  &\quad + \sum_{t=1}^TB_1(t).\label{regret_2_1}
\end{align}
Furthermore, according to the convexity of $f_{1,i}(\cdot,x_2)$ (Assumption \ref{asm1}(ii)) and Assumption \ref{asm1}(iii),
\begin{align}
  B_1(t) &\overset{\eqref{cost_def}}{=} \frac{1}{n_1}\sum_{i=1}^{n_1}[f_{1,i}(\bar{x}_1(t),\bar{x}_2(t)) - f_{1,i}(x_{1,i}(t),\bar{x}_2(t))\notag\\
  &\quad + f_{1,i}(x_{1,i}(t),\bar{x}_2(t)) - f_{1,i}(v_{1,i}(t),\bar{x}_2(t))\notag\\
  &\quad + f_{1,i}(v_{1,i}(t),\bar{x}_{2}(t)) - f_{1,i}(v_{1,i}(t),u_{2,i}(t))\notag\\
  &\quad + f_{1,i}(x_1^{\circ},u_{2,i}(t)) - f_{1,i}(x_1^{\circ},\bar{x}_2(t))\notag\\
  &\quad + f_{1,i}(v_{1,i}(t),u_{2,i}(t)) - f_{1,i}(x_1^{\circ},u_{2,i}(t))]\notag\\
  &\le \frac{1}{n_1}\sum_{i=1}^{n_1}\Big[L_{1,1}(\|x_{1,i}(t) - \bar{x}_1(t)\| + \|x_{1,i}(t) - v_{1,i}(t)\|)\notag\\
  &\quad + 2L_{1,2}\|u_{2,i}(t) - \bar{x}_2(t)\| + \underbrace{\langle g_{1,i}(t),v_{1,i}(t) - x_1^{\circ}\rangle}_{C_{1,i}(t)}\Big].\label{B_1}
\end{align}
By the Cauchy-Schwarz inequality, we obtain
\begin{align}
  C_{1,i}(t) &= \langle g_{1,i}(t),v_{1,i}(t) - x_{1,i}(t+1)\rangle\notag\\
  &\quad + \langle g_{1,i}(t), x_{1,i}(t+1) - x_1^{\circ}\rangle\notag\\
  &\le \|g_{1,i}(t)\|_{\ast}\|v_{1,i}(t) - x_{1,i}(t+1)\|\notag\\
  &\quad + \langle g_{1,i}(t), x_{1,i}(t+1) - x_1^{\star}\rangle\notag\\
  &\le L_{1,1}^2\frac{\alpha(t)}{\sigma_1} + \langle g_{1,i}(t), x_{1,i}(t+1) - x_1^{\circ}\rangle,\label{C_1}
\end{align}
where the last inequality follows from Lemma \ref{lem4} and \eqref{bound_subgradient}. Consequently,
\begin{align}
  R_1^{(i)}(T) &\le \sum_{t=1}^T\left(2L_{1,2}\|u_{2,i}(t) - \bar{x}_2(t)\| + L_{1,1}\|x_{1,i}(t) - \bar{x}_1(t)\|\right)\notag\\
  &\quad + \sum_{t=1}^T\frac{1}{n_1}\sum_{i=1}^{n_1}\Big[L_{1,1}(\|x_{1,i}(t) - \bar{x}_1(t)\|\notag\\
  &\qquad + \|x_{1,i}(t) - v_{1,i}(t)\|) + 2L_{1,2}\|u_{2,i}(t) - \bar{x}_2(t)\|\Big]\notag\\
  &\quad + \sum_{t=1}^T\frac{1}{n_1}\sum_{i=1}^{n_1}\langle g_{1,i}(t), x_{1,i}(t+1) - x_1^{\circ}\rangle\notag\\
  &\quad+ \sum_{t=1}^T\frac{\alpha(t)}{\sigma_1}L_{1,1}^2.\label{regret_2_2}
\end{align}
Applying \eqref{lem_bound_2} and \eqref{lem_bound_3} to the triangle inequality of the norm yields
\begin{equation}\label{lem_bound_3_new}
  \|x_{l,i}(t) - v_{l,i}(t)\| \le 2H_l(t).
\end{equation}
Substituting \eqref{lem_bound_2}, \eqref{lem_bound_4}, \eqref{lem_bound_5} and \eqref{lem_bound_3_new} into \eqref{regret_2_2}, we obtain
\begin{align}
  &\quad R_1^{(i)}(T)\notag\\
  &\le \frac{4}{\sigma_1}\sum_{t=1}^T\left(n_1L_{1,1}^2\Gamma_1\sum_{s=1}^{t-1}\theta_1^{t-1-s}\alpha(s-1) + 2L_{1,1}^2\alpha(t-1)\right)\notag\\
  &\quad + \sum_{t=1}^T\frac{\alpha(t)}{\sigma_1}L_{1,1}^2 + \frac{8}{\sigma_2}L_{1,2}L_{2,1}\sum_{t=1}^T\alpha(t-1) \notag\\
  &\quad + \frac{4}{\sigma_2}\sum_{t=1}^T\left(n_2L_{1,2}L_{2,1}\Gamma_2\sum_{s=1}^{t-1}\theta_2^{t-1-s}\alpha(s-1)\right)\notag\\
  &\quad + 4\sum_{t=1}^T\sum_{l=1}^2n_l\Gamma_l\theta_l^{t-1}\Lambda_l + \frac{\Upsilon_l^2}{\alpha(T)}.\label{regret_bound_2}
\end{align}
By exchanging the order of summation, for $l\in\{1,2\}$,
\begin{align}
&\quad\sum_{t=1}^T\sum_{s=1}^{t-1}\theta_l^{t-1-s}\alpha(s-1)\notag\\
&\le \sum_{t=1}^T\sum_{s=0}^{T-1}\theta_l^s\alpha(t-1)\le \frac{1}{1 - \theta_l}\sum_{t=1}^T\alpha(t-1).\label{exchange_sum}
\end{align}
 This combined with \eqref{regret_bound_2} produces \eqref{regret_bound_final}.
\end{proof}
Theorem \ref{thm1} provides an upper bound on the individual regret for each agent in subnetwork $\Sigma_1$ in the case of a general step-size sequence $\{\alpha(t)\}_{t=1}^T$. Note that the impact of the communication network is incorporated in the constants $\Gamma_l$ and $\theta_l$. Moreover, a regret bound for each agent in $\Sigma_2$ can be similarly established. Next, we characterize the regret bound under two specific step-size sequences.
\begin{Col}\label{col1}
  Under the same conditions stated in Theorem \ref{thm1}, Algorithm \ref{alg1} with a constant step-size $\alpha(t) \equiv 1/\sqrt{T}$ yields the regret bound of order
  \[R_1^{(i)}(T)\le O(\sqrt{T}).\]
  In addition, Algorithm \ref{alg1} with a diminishing step-size sequence $\alpha(t) = t^{-(\frac{1}{2} + \epsilon)}$ ($\epsilon\in(0,\frac{1}{2})$) yields the regret bound of order
  \[R_1^{(i)}(T) \le O(T^{\frac{1}{2} + \epsilon}).\]
\end{Col}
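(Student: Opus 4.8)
The plan is to substitute each of the two prescribed step-size sequences into the general bound \eqref{regret_bound_final} of Theorem \ref{thm1} and estimate every summand separately. Since all the problem constants ($\sigma_l$, $n_l$, $L_{1,1}$, $L_{1,2}$, $L_{2,1}$, $\Gamma_l$, $\theta_l$, $\Lambda_l$, $\Upsilon_l$) are independent of $T$, it suffices to track the $T$-dependence of four quantities: $S_1(T)\triangleq\sum_{t=1}^T\alpha(t-1)$, $S_2(T)\triangleq\sum_{t=1}^T\alpha(t)$, the factor $1/\alpha(T)$, and the geometric tail $\sum_{t=1}^T\theta_l^{t-1}$.

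First I would dispose of the network term $4\sum_{t=1}^T\sum_{l=1}^2 n_l\Gamma_l\theta_l^{t-1}\Lambda_l$: because $\theta_l\in(0,1)$ by Lemma \ref{lem3}, we have $\sum_{t=1}^T\theta_l^{t-1}\le\sum_{t=0}^{\infty}\theta_l^{t}=(1-\theta_l)^{-1}$, so this term is $O(1)$ uniformly in $T$ and never dominates. For the constant step-size $\alpha(t)\equiv 1/\sqrt{T}$ one has $S_1(T)=S_2(T)=T\cdot T^{-1/2}=\sqrt{T}$ and $1/\alpha(T)=\sqrt{T}$; substituting into \eqref{regret_bound_final} shows every remaining summand is a fixed constant times $\sqrt{T}$, hence $R_1^{(i)}(T)\le O(\sqrt{T})$.

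For the diminishing choice $\alpha(t)=t^{-(\frac12+\epsilon)}$ with $\epsilon\in(0,\tfrac12)$ (interpreting the undefined $\alpha(0)$ as an arbitrary finite positive constant, equivalently replacing $t$ by $1+t$, which changes no orders), I would bound $S_2(T)$ by the integral comparison $\sum_{t=1}^T t^{-(\frac12+\epsilon)}\le 1+\int_1^T s^{-(\frac12+\epsilon)}\,ds = 1+\frac{1}{\frac12-\epsilon}\bigl(T^{\frac12-\epsilon}-1\bigr)$, which is $O(T^{\frac12-\epsilon})$ precisely because $\frac12-\epsilon>0$; shifting the index gives $S_1(T)=\alpha(0)+\sum_{t=1}^{T-1}\alpha(t)=O(T^{\frac12-\epsilon})$ as well. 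Meanwhile $1/\alpha(T)=T^{\frac12+\epsilon}$, and since $\epsilon>0$ we have $T^{\frac12+\epsilon}\ge T^{\frac12-\epsilon}$, so the $\Upsilon_l^2/\alpha(T)$ term dominates all others in \eqref{regret_bound_final}, yielding $R_1^{(i)}(T)\le O(T^{\frac12+\epsilon})$.

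There is essentially no deep obstacle here---the corollary is a direct computation from Theorem \ref{thm1}---but the one place requiring care is the diminishing case: one must (i) note that the undefined value $\alpha(0)$ appears in $S_1(T)$ and inside $H_l(t)$ and handle it by convention, and (ii) keep track of the competition between the decreasing sums $S_1(T),S_2(T)\sim T^{\frac12-\epsilon}$ and the growing factor $1/\alpha(T)\sim T^{\frac12+\epsilon}$, so as to correctly identify $O(T^{\frac12+\epsilon})$---and not $O(T^{\frac12})$---as the governing rate. Both regimes are then no-regret, since $\sqrt{T}/T\to 0$ and $T^{\frac12+\epsilon}/T=T^{\epsilon-\frac12}\to 0$ as $\epsilon<\tfrac12$.
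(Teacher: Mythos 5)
Your proposal is correct and follows essentially the same route as the paper: substitute each step-size into the bound \eqref{regret_bound_final}, bound $\sum_{t=1}^T\alpha(t)$ by integral comparison (giving $\sqrt{T}$ and $O(T^{\frac12-\epsilon})$ respectively), note $1/\alpha(T)=\sqrt{T}$ or $T^{\frac12+\epsilon}$, and observe the geometric network term is $O(1)$. Your extra care with the undefined $\alpha(0)$ and with identifying $T^{\frac12+\epsilon}$ (not $T^{\frac12-\epsilon}$) as the dominant exponent is sound but does not change the argument.
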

This corollary follows by $\sum_{t=1}^T\alpha(t) \le 1 + \int_{t=1}^Tt^{-(\frac{1}{2} + \epsilon)} \le T^{\frac{1}{2} - \epsilon}$ and $1/\alpha(T) = T^{\frac{1}{2} + \epsilon}$. Furthermore, Corollary \ref{col1} shows that both constant and diminishing step-sizes can make Algorithm \ref{alg1} no-regret, and the regret rate has the same order as that of distributed online optimization \cite{shahrampour2017distributed}.

%%%%
%%%%-----------------
%%%%
%%%%-----------------
\section{CONVERGENCE ANALYSIS}
In this section, we study the convergence properties of Algorithm \ref{alg1} under both diminishing and constant step-sizes.
\subsection{Diminishing Step-Size}
In this subsection, we adopt a diminishing step-size sequence in Algorithm \ref{alg1} and make the following assumption.
\begin{assumption}\label{asm4}
  $\sum_{t=0}^{\infty}\alpha(t) = \infty$ and $\sum_{t=0}^{\infty}\alpha^2(t) < \infty$.
\end{assumption}
To facilitate the convergence analysis, we first state a well-known result about non-negative sequences \cite{polyak1987introduction}.
\begin{lem}\label{lem9}
  Let $\{a_t\}$, $\{b_t\}$ and $\{c_t\}$ be non-negative sequences with $\sum_{t=0}^{\infty}b_t < \infty$. If $a_{t+1} \le a_t + b_t - c_t$ holds for any $t$, then $a_t$ converges to a finite number and $\sum_{t=0}^Tc_t < \infty$.
\end{lem}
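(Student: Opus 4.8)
The plan is to use the classical telescoping device: absorb the summable perturbation $\{b_t\}$ into an auxiliary sequence that is monotone nonincreasing, so that convergence follows immediately from the fact that a real sequence which is nonincreasing and bounded below converges. Concretely, I would first set $B \triangleq \sum_{s=0}^{\infty} b_s$, which is finite by hypothesis, and for each $t$ define the tail sum $\beta_t \triangleq \sum_{s=t}^{\infty} b_s$. Since the series $\sum b_s$ converges, each $\beta_t$ is finite, $\beta_t \to 0$ as $t\to\infty$, and $\beta_t - \beta_{t+1} = b_t$. I then introduce $d_t \triangleq a_t + \beta_t$, which is nonnegative because $a_t \ge 0$ and $\beta_t \ge 0$.

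Next I would show $\{d_t\}$ is nonincreasing. Using the recursion $a_{t+1} \le a_t + b_t - c_t$ together with $c_t \ge 0$,
\[
d_{t+1} = a_{t+1} + \beta_{t+1} \le a_t + b_t - c_t + \beta_{t+1} = a_t + \beta_t - c_t = d_t - c_t \le d_t .
\]
Hence $\{d_t\}$ is nonincreasing and bounded below by $0$, so it converges to a finite limit $d_\infty \ge 0$. Because $a_t = d_t - \beta_t$ and $\beta_t \to 0$, it follows that $a_t \to d_\infty$, which is the claimed convergence of $\{a_t\}$ to a finite number.

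Finally, for the summability of $\{c_t\}$ I would telescope the inequality $c_t \le d_t - d_{t+1}$ obtained above: for any $T$,
\[
\sum_{t=0}^{T} c_t \le \sum_{t=0}^{T} \bigl(d_t - d_{t+1}\bigr) = d_0 - d_{T+1} \le d_0 = a_0 + B < \infty ,
\]
and letting $T\to\infty$ yields $\sum_{t=0}^{\infty} c_t < \infty$ (which is the intended conclusion; the statement's finite upper limit is evidently a typo). The argument is entirely elementary, so there is no genuine obstacle; the only point needing a little care is the well-definedness of the tail sums $\beta_t$ and the fact that $\beta_t \to 0$, both of which are immediate from $\sum_{t=0}^{\infty} b_t < \infty$. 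This lemma is the deterministic counterpart of the Robbins--Siegmund almost-supermartingale convergence theorem, and the proof above is simply its proof with the conditional-expectation bookkeeping removed.
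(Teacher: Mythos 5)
Your proof is correct: the tail-sum device $\beta_t=\sum_{s\ge t}b_s$, the monotone auxiliary sequence $d_t=a_t+\beta_t$, and the telescoping bound $\sum_{t=0}^{T}c_t\le d_0=a_0+\sum_{t=0}^{\infty}b_t$ together give both conclusions, and your reading of the upper limit as a typo for $\sum_{t=0}^{\infty}c_t<\infty$ matches the intended statement. The paper itself offers no proof of this lemma---it is quoted as a known result from Polyak's book---so there is nothing to compare against; your argument is the standard elementary one (the deterministic Robbins--Siegmund special case, as you note) and would serve as a self-contained justification.
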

Next, we provide a convergence result on the actual sequence of actions; namely, each paired sequence $\{(x_{1,i}(t),x_{2,j}(t))\}$ converges to the NE.
\begin{thm}\label{thm2}
  Suppose that Assumptions \ref{asm1}-\ref{asm4} hold and cost function $U$ is strictly convex-concave. Then Algorithm \ref{alg1} generates a sequence that converges to the unique NE $x^{\ast} = (x_1^{\ast},x_2^{\ast})$, i.e.,
  \begin{equation}\label{convergence_result}
    \lim_{t\to\infty}x_{1,i}(t) = x_1^{\ast},\quad \lim_{t\to\infty}x_{2,j}(t) = x_2^{\ast},\ \forall i\in\mathcal{V}_1, j\in\mathcal{V}_2.
  \end{equation}
\end{thm}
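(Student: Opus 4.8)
The plan is a Lyapunov argument of diminishing-step-size subgradient type, with the per-subnetwork averaged Bregman divergence to a fixed NE as the potential, the key leverage being the zero-sum identity $f_2=-f_1$ together with the saddle-point inequalities \eqref{NE_def}. First I would fix any NE $x^\ast=(x_1^\ast,x_2^\ast)$ (one exists under Assumption \ref{asm1}) and set $V_l(t)\triangleq\frac{1}{n_l}\sum_{i=1}^{n_l}D_{\psi_l}(x_l^\ast,x_{l,i}(t))$, $l=1,2$. Two preliminary facts are recorded. \emph{Consensus:} Assumption \ref{asm4} forces $\alpha(t)\to 0$, so each term of $H_l(t)$ in Lemma \ref{lem5} vanishes (the geometric term trivially, the $\alpha(t-1)$ term directly, and the convolution $\sum_{s=1}^{t-1}\theta_l^{t-1-s}\alpha(s-1)$ because it is a geometric-weighted average of a null sequence); hence $H_l(t)\to 0$, so $\|x_{l,i}(t)-\bar x_l(t)\|\to 0$ and likewise $v_{l,i}(t),u_{l,i}(t)\to\bar x_l(t)$. \emph{Summability:} $\sum_t\alpha^2(t)<\infty$, and, exchanging the order of summation as in \eqref{exchange_sum} and using that $\alpha$ is non-increasing, $\sum_t\alpha(t)H_l(t)<\infty$.

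Next I would establish the one-step estimate: from the optimality condition for \eqref{update_1}, the three-point identity \eqref{breg_prop1}, Lemma \ref{lem4} and \eqref{bound_subgradient},
\[D_{\psi_l}(x_l^\ast,x_{l,i}(t+1))\le D_{\psi_l}\big(x_l^\ast,v_{l,i}(t)\big)-\alpha(t)\langle g_{l,i}(t),v_{l,i}(t)-x_l^\ast\rangle+\frac{\alpha^2(t)}{\sigma_l}L_{l,1}^2.\]
Averaging over $i\in\mathcal V_l$, using the convexity of $D_{\psi_l}(x_l^\ast,\cdot)$ (Assumption \ref{asm3}) to push the weighted average $v_{l,i}(t)=\sum_j w_{l,ij}(t)x_{l,j}(t)$ through Jensen's inequality, and then the column-stochasticity of $W_l(t)$ (Assumption \ref{asm2}(ii)), yields $\frac{1}{n_l}\sum_i D_{\psi_l}(x_l^\ast,v_{l,i}(t))\le V_l(t)$. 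Bounding $\langle g_{l,i}(t),v_{l,i}(t)-x_l^\ast\rangle$ below by convexity of $f_{1,i}(\cdot,x_2)$ when $l=1$ (resp.\ of $f_{2,i}(x_1,\cdot)$ when $l=2$, Assumption \ref{asm1}(ii)), replacing $v_{l,i}(t),u_{l,i}(t)$ by $\bar x_l(t)$ at the cost of $O(H_l(t))$ Lipschitz errors (Assumption \ref{asm1}(iii), Lemma \ref{lem5}), and adding the $l=1$ and $l=2$ recursions: the two $f_1(\bar x_1(t),\bar x_2(t))$ contributions cancel via $f_2=-f_1$, leaving
\[V_1(t+1)+V_2(t+1)\le V_1(t)+V_2(t)-\alpha(t)\,\delta(t)+b(t),\qquad \delta(t)\triangleq f_1\big(\bar x_1(t),x_2^\ast\big)-f_1\big(x_1^\ast,\bar x_2(t)\big),\]
with $\sum_t b(t)<\infty$ by the summability facts. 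The saddle-point inequalities \eqref{NE_def}, namely $f_1(\bar x_1(t),x_2^\ast)\ge f_1(x_1^\ast,x_2^\ast)\ge f_1(x_1^\ast,\bar x_2(t))$, give $\delta(t)\ge 0$.

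Then Lemma \ref{lem9} with $a_t=V_1(t)+V_2(t)$ and $c_t=\alpha(t)\delta(t)$ shows that $V_1(t)+V_2(t)$ converges and $\sum_t\alpha(t)\delta(t)<\infty$; since $\sum_t\alpha(t)=\infty$ this forces $\liminf_t\delta(t)=0$. As $\delta(t)$ is the sum of the two nonnegative gaps $f_1(\bar x_1(t),x_2^\ast)-f_1(x_1^\ast,x_2^\ast)$ and $f_1(x_1^\ast,x_2^\ast)-f_1(x_1^\ast,\bar x_2(t))$, both vanish along a common subsequence; by compactness (Assumption \ref{asm1}(i)) pass to a further subsequence with $\bar x_1(t)\to\tilde x_1$, $\bar x_2(t)\to\tilde x_2$, and by continuity $f_1(\tilde x_1,x_2^\ast)=f_1(x_1^\ast,x_2^\ast)=f_1(x_1^\ast,\tilde x_2)$; strict convexity of $U(\cdot,x_2^\ast)$ and strict concavity of $U(x_1^\ast,\cdot)$ then force $\tilde x_1=x_1^\ast$ and $\tilde x_2=x_2^\ast$. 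With consensus, $x_{l,i}(t)\to x_l^\ast$ along this subsequence, so by the reciprocity \eqref{reciprocity}, $V_1(t)+V_2(t)\to 0$ along it; since $V_1(t)+V_2(t)$ converges, it converges to $0$. Finally \eqref{breg_prop2} gives $\|x_{l,i}(t)-x_l^\ast\|^2\le\frac{2n_l}{\sigma_l}\big(V_1(t)+V_2(t)\big)\to 0$, which is \eqref{convergence_result}; as $x^\ast$ was an arbitrary NE and the limit is unique, the NE is unique.

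I expect the main obstacle to be this last step: the estimate only delivers $\liminf_t\delta(t)=0$, so the real content is upgrading subsequential convergence of the \emph{averages} $\bar x_l(t)$ to convergence of \emph{every agent's iterate}. This requires simultaneously invoking strict convex-concavity (to pin down the unique subsequential limit of the averages), the reciprocity \eqref{reciprocity}, the ``monotone up to summable perturbation'' behaviour of $V_1+V_2$ through Lemma \ref{lem9}, and the consensus bound $H_l(t)\to 0$ to carry the conclusion from $\bar x_l(t)$ to each $x_{l,i}(t)$. A secondary technical point is verifying $\sum_t\alpha(t)H_l(t)<\infty$, which is precisely what keeps the accumulated network-and-Lipschitz error term $b(t)$ summable and hence Lemma \ref{lem9} applicable.
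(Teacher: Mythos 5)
Your proposal is correct and follows essentially the same route as the paper's proof: the same Bregman-divergence Lyapunov function, the same one-step estimate via the optimality condition of \eqref{update_1}, the three-point identity, Jensen's inequality with doubly stochastic weights, the zero-sum cancellation leaving the nonnegative gap $U(\bar x_1(t),x_2^{\ast})-U(x_1^{\ast},\bar x_2(t))$, summability of the consensus-error terms, Lemma \ref{lem9}, and the subsequence/strict convex-concavity/reciprocity argument to upgrade $\liminf$ to convergence of every agent's iterate. The only differences are cosmetic (you absorb the inexactness term via Lemma \ref{lem4} and \eqref{bound_subgradient} directly rather than via Young's inequality, which changes constants but not the argument).
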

\begin{proof}
By the convexity of $f_{1,i}$ with respect to $x_1$ and recalling that $g_{1,i}(t)\in\partial_1 f_{1,i}(v_{1,i}(t),u_{2,i}(t))$, for all $x_1\in\mathcal{X}_1$, we have
\begin{align}
  &\quad\langle g_{1,i}(t), x_1 - v_{1,i}(t)\rangle\notag\\
   &\le f_{1,i}(x_1, u_{2,i}(t)) - f_{1,i}(v_{1,i}(t),u_{2,i}(t))\notag\\
  &= f_{1,i}(x_1, u_{2,i}(t)) - f_{1,i}(x_1, \bar{x}_2(t))\notag\\
  &\quad + f_{1,i}(x_1, \bar{x}_2(t)) - f_{1,i}(\bar{x}_1(t),\bar{x}_2(t))\notag\\
  &\quad + f_{1,i}(\bar{x}_1(t),\bar{x}_2(t)) - f_{1,i}(v_{1,i}(t),\bar{x}_2(t))\notag\\
  &\quad + f_{1,i}(v_{1,i}(t),\bar{x}_2(t)) - f_{1,i}(v_{1,i}(t),u_{2,i}(t))\notag\\
  &\le L(\|v_{1,i}(t) - \bar{x}_1(t)\| + 2\|u_{2,i}(t) - \bar{x}_2(t)\|)\notag\\
  &\quad + f_{1,i}(x_1, \bar{x}_2(t)) - f_{1,i}(\bar{x}_1(t),\bar{x}_2(t))\label{inner_part_1},
\end{align}
where $L = \max\{L_{1,1},L_{1,2},L_{2,1},L_{2,2}\}$ and the last inequality follows from Assumption \ref{asm1}(iii). Moreover, Young's inequality ($2\langle a,b\rangle\le c\|a\|_{\ast}^2 + \frac{1}{c}\|b\|^2$) and $\|g_{1,i}(t)\|_{\ast}\le L$ together derive
\begin{align}\label{inner_part_2}
  &\quad\langle\alpha(t)g_{1,i}(t), v_{1,i}(t) - x_{1,i}(t+1)\rangle\notag\\
   &\le \frac{\alpha^2(t)L^2}{2\sigma_1} + \frac{\sigma_1}{2}\|v_{1,i}(t) - x_{1,i}(t+1)\|^2.
\end{align}
Then, with $U(\cdot,\cdot)\triangleq f_1(\cdot,\cdot)$, we obtain
\begin{align}
  &\quad\sum_{i=1}^{n_1}\langle \alpha(t)g_{1,i}(t),x_1 - x_{1,i}(t+1)\rangle\notag\\
  &\le \alpha(t)\sum_{i=1}^{n_1}(f_{1,i}(x_1, \bar{x}_2(t)) - f_{1,i}(\bar{x}_1(t),\bar{x}_2(t)))\notag\\
  &\quad + \alpha^2(t)\frac{n_1L^2}{2\sigma_1} + \frac{\sigma_1}{2}\sum_{i=1}^{n_1}\|v_{1,i}(t) - x_{1,i}(t+1)\|^2\notag\\
  &\quad + \alpha(t)L\sum_{i=1}^{n_1}e_{1,i}(t)\notag\\
  &= n_1\alpha(t)(U(x_1,\bar{x}_2(t)) - U(\bar{x}_1(t),\bar{x}_2(t))) + \alpha(t)L\sum_{i=1}^{n_1}e_{1,i}(t)\notag\\
  &\quad + \alpha^2(t)\frac{n_1L^2}{2\sigma_1} + \frac{\sigma_1}{2}\sum_{i=1}^{n_1}\|v_{1,i}(t) - x_{1,i}(t+1)\|^2,\label{inner_part}
\end{align}
where $e_{1,i}(t) = \|v_{1,i}(t) - \bar{x}_1(t)\| + 2\|u_{2,i}(t) - \bar{x}_2(t)\|$. On the other hand, from Lemma \ref{lem2}, we get
\begin{align}
  &\quad\langle\nabla\psi_1(x_{1,i}(t+1)) - \nabla\psi_1(v_{1,i}(t)), x_1 - x_{1,i}(t+1)\rangle\notag\\
  &= D_{\psi_1}(x_1,v_{1,i}(t)) - D_{\psi_1}(x_1, x_{1,i}(t+1))\notag\\
  &\quad - D_{\psi_1}(x_{1,i}(t+1),v_{1,i}(t)),\label{three_point}
\end{align}
and
\begin{equation}\label{strong_conv}
  D_{\psi_1}(x_{1,i}(t+1),v_{1,i}(t)) \ge \frac{\sigma_1}{2}\|x_{1,i}(t+1) - v_{1,i}(t)\|^2.
\end{equation}
Meanwhile, using Assumption \ref{asm3} and Jensen's inequality,
\begin{align}
  D_{\psi_1}(x_1,v_{1,i}(t)) &= D_{\psi_1}\left(x_1,\sum_{j=1}^{n_1}w_{1,ij}x_{1,j}(t)\right)\notag\\
  &\le \sum_{j=1}^{n_1}w_{1,ij}(t)D_{\psi_1}(x_1,x_{1,j}(t)).\label{conv}
\end{align}
Plugging \eqref{strong_conv} and \eqref{conv} back in \eqref{three_point}, we derive
\begin{align}
  &\quad\sum_{i=1}^{n_1}\langle\nabla\psi_1(x_{1,i}(t+1)) - \nabla\psi_1(v_{1,i}(t)), x_1 - x_{1,i}(t+1)\rangle\notag\\
  &\le \sum_{i=1}^{n_1}\Big(D_{\psi_1}(x_1,x_{1,i}(t)) - D_{\psi_1}(x_1, x_{1,i}(t+1))\Big)\notag\\
  &\quad - \frac{\sigma_1}{2}\sum_{i=1}^{n_1}\|v_{1,i}(t) - x_{1,i}(t+1)\|^2,\label{dist_part}
\end{align}
because $\sum_{i=1}^{n_1}w_{1,ij}(t) = 1$. From \eqref{update_1} and the optimality condition, for all $x_1\in\mathcal{X}_1$,
\begin{align}
  &\Big\langle \nabla \psi_1(x_{1,i}(t+1)) - \nabla \psi_1(v_{1,i}(t)) + \alpha(t)g_{1,i}(t),\notag\\
  &\ \  x_1 - x_{1,i}(t+1)\Big\rangle \ge 0.\label{opt_cond}
\end{align}
Summing up \eqref{inner_part} and \eqref{dist_part}, with \eqref{opt_cond}, we obtain
\begin{align*}
  0 &\le \sum_{i=1}^{n_1}(D_{\psi_1}(x_1,x_{1,i}(t)) - D_{\psi_1}(x_1, x_{1,i}(t+1))) \\
  &\quad + n_1\alpha(t)(U(x_1,\bar{x}_2(t)) - U(\bar{x}_1(t),\bar{x}_2(t)))\\
  &\quad + \alpha^2(t)\frac{n_1L^2}{2\sigma_1} + \alpha(t)L\sum_{i=1}^{n_1}e_{1,i}(t).
\end{align*}
Rearranging the terms yields
\begin{align}
  &\quad\sum_{i=1}^{n_1}D_{\psi_1}(x_1, x_{1,i}(t+1))\notag\\ &\le \sum_{i=1}^{n_1}D_{\psi_1}(x_1, x_{1,i}(t)) + \alpha(t)L\sum_{i=1}^{n_1}e_{1,i}(t) + \alpha^2(t)\frac{n_1L^2}{2\sigma_1} \notag\\
  &\quad + n_1\alpha(t)(U(x_1,\bar{x}_2(t)) - U(\bar{x}_1(t),\bar{x}_2(t))).\label{network_1}
\end{align}
Similarly, for subnetwork $\Sigma_2$, we get the relation
\begin{align}
  &\quad\sum_{i=1}^{n_2}D_{\psi_2}(x_2, x_{2,i}(t+1))\notag\\ &\le \sum_{i=1}^{n_2}D_{\psi_2}(x_2, x_{2,i}(t)) + \alpha(t)L\sum_{i=1}^{n_2}e_{2,i}(t) + \alpha^2(t)\frac{n_2L^2}{2\sigma_2} \notag\\
  &\quad + n_2\alpha(t)(U(\bar{x}_1(t),\bar{x}_2(t)) - U(\bar{x}_1(t), x_2)),\label{network_2}
\end{align}
where $e_{2,i}(t) = \|v_{2,i}(t) - \bar{x}_2(t)\| + 2\|u_{1,i}(t) - \bar{x}_1(t)\|$. Let $x^{\ast} = (x_1^{\ast},x_2^{\ast})$ be the NE. Consider the Lyapunov function
\[V(t,x_1^{\ast},x_2^{\ast}) \triangleq \frac{1}{n_1}\sum_{i=1}^{n_1}D_{\psi_1}(x_1^{\ast},x_{1,i}(t)) + \frac{1}{n_2}\sum_{i=1}^{n_2}D_{\psi}(x_2^{\ast},x_{2,i}(t)).\]
Obviously,
\begin{align}
  &\quad V(t+1,x_1^{\ast},x_2^{\ast})\notag\\
   &\le V(t,x_1^{\ast},x_2^{\ast}) - \alpha(t)(U(\bar{x}_1(t), x_2^{\ast}) - U(x_1^{\ast},\bar{x}_2(t)))\notag\\
  &\quad + \alpha(t)L\sum_{l=1}^2\frac{1}{n_1}\sum_{i=1}^{n_1}e_{l,i}(t) + \alpha^2(t)L^2\sum_{l=1}^2\frac{1}{2\sigma_l}.\label{V_iter}
\end{align}
Note by the definition of NE that
\begin{equation}\label{U_diff}
  U(\bar{x}_1(t), x_2^{\ast})\ge U(x_1^{\ast},x_2^{\ast})\ge U(x_1^{\ast},\bar{x}_2(t)),
\end{equation}
and $\sum_{t=1}^{\infty}\alpha^2(t) < \infty$. Therefore, in order to use Lemma \ref{lem9}, we only need to show
\begin{equation}\label{converge_cond}
  \sum_{t=1}^{\infty}\alpha(t)\left(\sum_{i=1}^{n_1}e_{1,i}(t) + \sum_{i=1}^{n_2}e_{2,i}(t)\right) < \infty.
\end{equation}
Applying the bounds in \eqref{lem_bound_2}-\eqref{lem_bound_4} gives
\begin{align*}
  e_{1,i}(t)
  &\le C_1\sum_{s=1}^{t-1}\theta_1^{t-1-s}\alpha(s - 1) + C_2\sum_{s=1}^{t-1}\theta_2^{t-1-s}\alpha(s - 1)\notag\\
  &\quad + C_3\alpha(t-1)+ \sum_{l=1}^22n_l\Gamma_l\theta_l^{t-1}\Lambda_l.
\end{align*}
where $C_1 = \frac{n_1L\Gamma_1}{\sigma_1}$, $C_2 = \frac{2n_2L\Gamma_2}{\sigma_2}$ and $C_3 = \frac{2L}{\sigma_1} + \frac{4L}{\sigma_2}$. Note that
\begin{align*}
  &\quad\sum_{t=1}^T\alpha(t)e_{1,i}(t)\\ &\le C_1\sum_{t=1}^T\sum_{s=1}^t\theta_1^{t-1-s}\alpha^2(s - 1) + 2\sum_{l=1}^2n_l\Gamma_l\Lambda_l\sum_{t=1}^T\alpha(t)\theta_l^{t-1}\\
  &\quad + C_2\sum_{t=1}^T\sum_{s=1}^t\theta_2^{t-1-s}\alpha^2(s - 1) + C_3\sum_{t=1}^T\alpha^2(t-1).
\end{align*}
Similar to \eqref{exchange_sum}, we have \[\sum_{t=1}^T\sum_{s=1}^{t}\theta_l^{t-1-s}\alpha^2(s-1)\le \frac{1}{1 - \theta_l}\sum_{t=1}^T\alpha^2(t-1).\]
Also, $\sum_{t=1}^T\alpha(t)\theta_1^{t-1}\le \alpha(0)\sum_{t=1}^T\theta_1^{t-1}\le\frac{\alpha(0)}{1 - \theta_1}$. Then
\begin{align}
  \sum_{t=1}^T\alpha(t)e_{1,i}(t) &\le \left(\frac{C_1}{1 - \theta_1} + \frac{C_2}{1 - \theta_2} + C_3\right)\sum_{t=1}^T\alpha^2(t-1)\notag\\
  &\quad + 2\sum_{l=1}^2\frac{n_l\Gamma_l\alpha(0)\Lambda_l}{1-\theta_l},\label{converge_cond_1}
\end{align}
implying $\sum_{t=1}^{\infty}\alpha(t)e_{1,i}(t) < \infty$. Similarly, $\sum_{t=1}^{\infty}\alpha(t)e_{2,i}(t) < \infty$, i.e., \eqref{converge_cond} holds. By Lemma \ref{lem9}, $V(t,x_1^{\ast},x_2^{\ast})$ converges to a finite number. Furthermore, by \eqref{V_iter} and \eqref{U_diff},
\[0\le\sum_{t=0}^{\infty}\alpha(t)(U(\bar{x}_1(t),x_2^{\ast}) - U(x_1^{\ast},\bar{x}_2(t))) < \infty.\]
Therefore, $\sum_{t=0}^{\infty}\alpha(t) = \infty$ yields $\lim\inf_{t\to\infty}U(\bar{x}_1(t),x_2^{\ast}) - U(x_1^{\ast},\bar{x}_2(t)) = 0$. Hence, there exists a subsequence $\{t_r\}$ such that
\[\lim_{r\to\infty}U(x_1^{\ast},\bar{x}_2(t_r)) = U(x_1^{\ast},x_2^{\ast}) = \lim_{r\to\infty}U(\bar{x}_1(t_r),x_2^{\ast}).\]
Let $(\tilde{x}_1,\tilde{x}_2)$ be a limit point of the bounded sequence $\{(\bar{x}_1(t_r),\bar{x}_2(t_r))\}$. Then there exists a subsequence $\{t_{r_p}\}$ such that $\lim_{p\to\infty}\bar{x}_l(t_{r_p}) = \tilde{x}_{l}$, and hence, by the continuity of $U(\cdot,\cdot)$,
\[U(x_1^{\ast},\tilde{x}_2) = U(\tilde{x}_1,x_2^{\ast}) = U(x_1^{\ast},x_2^{\ast}).\]
By the strict convexity-concavity of $U$, the NE is unique, i.e., $(\tilde{x}_1,\tilde{x}_2) = (x_1^{\ast},x_2^{\ast})$. Using \eqref{lem_bound_2} and \cite[Lemma 5.2]{lou2015nash}, we obtain
\begin{align*}
  \lim_{p\to\infty}x_{1,i}(t_{r_p}) &= \lim_{p\to\infty}\bar{x}_1(t_{r_p}) = x_1^{\ast},\\
  \lim_{p\to\infty}x_{2,i}(t_{r_p}) &= \lim_{p\to\infty}\bar{x}_2(t_{r_p}) = x_2^{\ast}.
\end{align*}
According to Assumption \ref{asm3}, $\lim_{p\to\infty}V(t_{r_p},x_1^{\ast},x_2^{\ast}) = 0$. Moreover, by the convergence of $V(t,x_1^{\ast},x_2^{\ast})$,
\[\lim_{t\to\infty}V(t,x_1^{\ast},x_2^{\ast}) = \lim_{p\to\infty}V(t_{r_p},x_1^{\ast},x_2^{\ast}) = 0,\]
which is incorporated with \eqref{breg_prop2} to prove \eqref{convergence_result}.
\end{proof}
\begin{remark}
   Compared with \cite{talebi2019distributed} which establishes the convergence of the running average of action iterates under the assumption of cross-monotonicity and strict monotonicity, Theorem \ref{thm2} shows that the agents' estimates of the subnetwork state can achieve consensus and converge to the NE.
 \end{remark}
Moreover, since the diminishing step-size sequence in Corollary 1 satisfies Assumption \ref{asm4}, the following conclusion is a direct corollary of Theorem \ref{thm2}.
\begin{Col}\label{col2}
	Under Assumptions \ref{asm1}-\ref{asm3}, Algorithm \ref{alg1} with a diminishing step-size sequence $\alpha(t) = t^{-(\frac{1}{2} + \epsilon)}$ ($\epsilon\in(0,\frac{1}{2})$) is a no-regret distributed updating policy while converges to the NE.
\end{Col}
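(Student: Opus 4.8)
The plan is to obtain the two assertions from results already in hand: the no-regret property from Corollary \ref{col1} and the convergence to the NE from Theorem \ref{thm2}. The only thing that actually needs checking is that the step-size $\alpha(t)=t^{-(\frac12+\epsilon)}$ with $\epsilon\in(0,\frac12)$ meets the hypotheses of both.

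For the no-regret half, Corollary \ref{col1} already states that, under Assumptions \ref{asm1}--\ref{asm3}, this step-size gives $R_1^{(i)}(T)\le O(T^{\frac12+\epsilon})$ for every $i\in\mathcal{V}_1$, and by the symmetry remarked after Theorem \ref{thm1} the same order bound holds for every agent of $\Sigma_2$. Since $\frac12+\epsilon<1$, dividing by $T$ yields $R_l^{(i)}(T)/T=O(T^{-(\frac12-\epsilon)})\to 0$ as $T\to\infty$ for $l=1,2$, which is precisely the no-regret condition from the definition following \eqref{regret_def}. Hence Algorithm \ref{alg1} is a no-regret distributed updating policy.

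For the convergence half I would verify Assumption \ref{asm4}. Because $0<\frac12+\epsilon<1$, the series $\sum_{t\ge1}t^{-(\frac12+\epsilon)}$ diverges, so $\sum_{t=0}^{\infty}\alpha(t)=\infty$; because $1+2\epsilon>1$, the $p$-series $\sum_{t\ge1}t^{-(1+2\epsilon)}$ converges, so $\sum_{t=0}^{\infty}\alpha^2(t)<\infty$. Moreover $\{\alpha(t)\}$ is positive and non-increasing, as Algorithm \ref{alg1} requires. Thus all hypotheses of Theorem \ref{thm2} are in force (including the strict convex-concavity of $U$, taken here as part of the standing assumptions), and Theorem \ref{thm2} gives \eqref{convergence_result}: every paired sequence $\{(x_{1,i}(t),x_{2,j}(t))\}$ converges to the unique NE. Combining the two halves proves the corollary.

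I do not expect any real analytical obstacle: the statement is a bookkeeping consequence of Theorem \ref{thm2} and Corollary \ref{col1}. The single point deserving care is that Theorem \ref{thm2} carries the extra hypothesis that $U$ is strictly convex-concave, which is not itself listed among Assumptions \ref{asm1}--\ref{asm3}; the no-regret conclusion, in contrast, uses only those three assumptions together with the explicit form of the step-size.
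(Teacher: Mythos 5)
Your proposal is correct and matches the paper's own argument: the paper likewise treats this as a direct corollary, checking that $\alpha(t)=t^{-(\frac12+\epsilon)}$ satisfies Assumption \ref{asm4} and then invoking Corollary \ref{col1} for the no-regret bound and Theorem \ref{thm2} for convergence. Your remark that strict convex-concavity of $U$ is an extra hypothesis of Theorem \ref{thm2} not listed in Assumptions \ref{asm1}--\ref{asm3} is a fair observation about the corollary's phrasing, and the paper implicitly carries it over as well.
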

\subsection{Constant Step-size}
Here we consider a constant step-size version of Algorithm \ref{alg1} (i.e., $\alpha(t) \equiv \alpha$). By Corollary \ref{col1}, we obtain a tighter regret bound under constant step-size. However, since Assumption \ref{asm4} does not hold in this situation, Theorem \ref{thm2} is no longer applicable. %We proceed to consider whether the iterates generated by Algorithm \ref{alg1} converge to a neighborhood (depends on $\alpha$) of the Nash equilibrium under a constant step-size.
Inspired by \cite{nedic2009subgradient}, we consider the following running average actions for each node $i$ in $\Sigma_l$,
\[\hat{x}_{l,i}(t) = \frac{1}{t}\sum_{s=0}^{t-1}x_{l,i}(s),\quad\text{for}\ t\ge 1, l = 1,2.\]
Next we prove that these averages provide an approximation of NE. Specifically, let $(x_1^{\ast},x_2^{\ast})$ be a NE. Then the following theorem establishes an asymptotic error bound between the cost of the averages and the cost of NE.
\begin{thm}\label{thm3}
  Suppose Assumptions \ref{asm1}-\ref{asm3} hold. Consider Algorithm \ref{alg1} with $\alpha(t)\equiv\alpha$. Then for all $t\ge 1$, the averages $(\hat{x}_{1,i}(t),\hat{x}_{2,j}(t)), i\in\mathcal{V}_1, j\in\mathcal{V}_2$ satisfy
  \begin{align}
    &\quad |U(\hat{x}_{1,i}(t),\hat{x}_{2,j}(t)) - U(x_1^{\ast},x_2^{\ast})|\notag\\
     &\le \sum_{l=1}^2\left(\frac{\Upsilon_l^2}{t\alpha} + \frac{L^2}{\sigma_l}\alpha\right)+ 4L(K_1 + K_2)\alpha\notag\\
     &\quad + 4L\frac{1}{t}\sum_{s=0}^{t-1}\sum_{l=1}^2n_l\Gamma_l\theta_l^{s-1}\Lambda_l,
  \end{align}
  where $K_l \triangleq \frac{1}{\sigma_l}\left(\frac{n_lL\Gamma_l}{1 - \theta_l} + 2L\right)$ for $l = 1,2$.
\end{thm}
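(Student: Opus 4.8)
Introduce the network average of the running averages, $\bar{\hat{x}}_l(t)\triangleq\frac{1}{n_l}\sum_{i=1}^{n_l}\hat{x}_{l,i}(t)=\frac{1}{t}\sum_{s=0}^{t-1}\bar{x}_l(s)$, and split the target quantity as
\[
|U(\hat{x}_{1,i}(t),\hat{x}_{2,j}(t))-U(x_1^{\ast},x_2^{\ast})|\le |U(\hat{x}_{1,i}(t),\hat{x}_{2,j}(t))-U(\bar{\hat{x}}_1(t),\bar{\hat{x}}_2(t))|+|U(\bar{\hat{x}}_1(t),\bar{\hat{x}}_2(t))-U(x_1^{\ast},x_2^{\ast})|.
\]
The first (consensus) term is immediate from the $L$-Lipschitz continuity of $U=f_1$ in each argument (Assumption \ref{asm1}(iii)), the triangle inequality, and \eqref{lem_bound_2}: it is bounded by $\frac{L}{t}\sum_{s=0}^{t-1}\big(\|x_{1,i}(s)-\bar{x}_1(s)\|+\|x_{2,j}(s)-\bar{x}_2(s)\|\big)\le\frac{L}{t}\sum_{s=0}^{t-1}(H_1(s)+H_2(s))$.

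For the second (saddle-gap) term I would return to inequalities \eqref{network_1} and \eqref{network_2} (which hold under Assumptions \ref{asm1}--\ref{asm3}), specialize them to $\alpha(t)\equiv\alpha$, move the $U$-difference to the left, sum over $s=0,\dots,t-1$ so the Bregman terms telescope, discard the nonnegative terminal Bregman term, and divide the first by $n_1\alpha t$ and the second by $n_2\alpha t$. Using $\frac1{n_l}\sum_{i=1}^{n_l}D_{\psi_l}(x_l,x_{l,i}(0))\le\Upsilon_l^2$ (finite by compactness of $\mathcal{X}_l$ and continuity of $D_{\psi_l}$, cf. Lemma \ref{lem6}) and adding the two inequalities, the terms $\pm U(\bar{x}_1(s),\bar{x}_2(s))$ cancel, yielding for all $x_1\in\mathcal{X}_1$, $x_2\in\mathcal{X}_2$
\[
\frac1t\sum_{s=0}^{t-1}\big(U(\bar{x}_1(s),x_2)-U(x_1,\bar{x}_2(s))\big)\le\Delta_t,\qquad \Delta_t\triangleq\sum_{l=1}^2\frac{\Upsilon_l^2}{t\alpha}+\frac{L}{t}\sum_{s=0}^{t-1}\sum_{l=1}^2\frac1{n_l}\sum_{i=1}^{n_l}e_{l,i}(s)+\frac{L^2\alpha}{2}\sum_{l=1}^2\frac1{\sigma_l}.
\]
Jensen's inequality applied to the convexity of $U(\cdot,x_2)$ and the concavity of $U(x_1,\cdot)$ then upgrades the left-hand side to $U(\bar{\hat{x}}_1(t),x_2)-U(x_1,\bar{\hat{x}}_2(t))\le\Delta_t$, still for every $x_1,x_2$.

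To convert this into a two-sided estimate of $U(\bar{\hat{x}}_1(t),\bar{\hat{x}}_2(t))-U(x_1^{\ast},x_2^{\ast})$, I would invoke the saddle-point inequalities $U(x_1^{\ast},\bar{\hat{x}}_2(t))\le U(x_1^{\ast},x_2^{\ast})\le U(\bar{\hat{x}}_1(t),x_2^{\ast})$ from \eqref{NE_def}: taking $(x_1,x_2)=(x_1^{\ast},\bar{\hat{x}}_2(t))$ gives $U(\bar{\hat{x}}_1(t),\bar{\hat{x}}_2(t))-U(x_1^{\ast},x_2^{\ast})\le\Delta_t$, and taking $(x_1,x_2)=(\bar{\hat{x}}_1(t),x_2^{\ast})$ gives $U(x_1^{\ast},x_2^{\ast})-U(\bar{\hat{x}}_1(t),\bar{\hat{x}}_2(t))\le\Delta_t$; hence $|U(\bar{\hat{x}}_1(t),\bar{\hat{x}}_2(t))-U(x_1^{\ast},x_2^{\ast})|\le\Delta_t$. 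It then remains to estimate $\Delta_t$: by \eqref{lem_bound_3}--\eqref{lem_bound_4} one has $\frac1{n_1}\sum_i e_{1,i}(s)\le H_1(s)+2H_2(s)$ and $\frac1{n_2}\sum_i e_{2,i}(s)\le H_2(s)+2H_1(s)$, so their sum is at most $3(H_1(s)+H_2(s))$; with $\alpha(s)\equiv\alpha$, $\sum_r\theta_l^r\le 1/(1-\theta_l)$ and $L_{l,1}\le L$ one gets $H_l(s)\le n_l\Gamma_l\theta_l^{s-1}\Lambda_l+K_l\alpha$. Adding the consensus bound of the first paragraph contributes one further copy of $\frac{L}{t}\sum_{s=0}^{t-1}(H_1(s)+H_2(s))$, so the combined coefficient of $\frac1t\sum_{s=0}^{t-1}(H_1(s)+H_2(s))$ is $4L$, and substituting the bound on $H_l$ gives exactly the claimed inequality (the $\frac{L^2\alpha}{2\sigma_l}$ produced here is tighter than, hence implies, the stated $\frac{L^2}{\sigma_l}\alpha$).

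I expect the sandwich step to be the crux. Unlike Theorem \ref{thm2}, where Assumption \ref{asm4} drives the residual to zero, here both $\Upsilon_l^2/(t\alpha)$ and the $O(\alpha)$ contributions persist, so one cannot pass to a limit; the argument must stay in the two-sided ``$|\cdot|\le\Delta_t$'' form, and the free variables $x_1,x_2$ in the averaged inequality have to be instantiated so that the two Jensen steps point in mutually compatible directions and the NE inequalities close the gap from above and below at once. Everything else---reducing $\frac1{n_l}\sum_i e_{l,i}(s)$ and $H_l(s)$ to a geometric tail plus $K_l\alpha$, and tallying constants---is routine given Lemmas \ref{lem5} and \ref{lem6}.
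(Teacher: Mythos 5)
Your proposal is correct, and the constants do tally to the stated bound (your $\frac{L^2\alpha}{2\sigma_l}$ is in fact a factor of two tighter than the paper's $\frac{L^2}{\sigma_l}\alpha$, so it implies the claim). The route, however, is organized differently from the paper's. You split the error into a consensus term for the running averages plus the saddle gap of the network-averaged running averages $\bar{\hat{x}}_l(t)$, and you obtain the gap bound by recycling the Lyapunov-type descent inequalities \eqref{network_1}--\eqref{network_2} from the proof of Theorem \ref{thm2} (which indeed hold for every fixed comparator $(x_1,x_2)$ under Assumptions \ref{asm1}--\ref{asm3} only), specializing to constant $\alpha$, telescoping the Bregman terms, and applying Jensen; the sandwich with the NE inequalities at $(x_1^{\ast},\bar{\hat{x}}_2(t))$ and $(\bar{\hat{x}}_1(t),x_2^{\ast})$ is sound since those points lie in the (convex) action sets. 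The paper instead never introduces $\bar{\hat{x}}_l(t)$: it applies Jensen directly to the per-agent averages $U(\hat{x}_{1,i}(t),x_2)\le\frac1t\sum_s U(x_{1,i}(s),x_2)$, uses the zero-sum identity to rewrite $U(\bar{x}_1(s),x_2)$ through $-f_{2,i}$, invokes the subgradient inequalities \eqref{f_2}--\eqref{f_1_2}, and then controls the inner-product sums via Lemma \ref{lem6} together with Lemma \ref{lem4} (which is where its $\frac{L^2}{\sigma_l}\alpha$ comes from, versus your Young-inequality cancellation inside \eqref{network_1}); it closes with the same NE sandwich, phrased as a maximum over $(x_1,x_2)$. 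What your approach buys is economy -- the subgradient step, three-point identity, and consensus errors are already packaged in \eqref{network_1}--\eqref{network_2}, and you isolate the agent-to-average discrepancy as one clean Lipschitz term with coefficient $L$, recovering the same $4L$ total ($3L$ from $e_{1,i}+e_{2,i}\le 3(H_1+H_2)$ plus $L$ from consensus); what the paper's route buys is a bound stated for the actual pair $(\hat{x}_{1,i}(t),\hat{x}_{2,j}(t))$ with arbitrary comparators throughout, without appealing to intermediate averaged iterates. Both are valid; yours is a legitimate, slightly sharper alternative proof.
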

\begin{proof}
  By Assumptions \ref{asm1}(ii) and \ref{asm1}(iii), $U$ is convex and Lipschitz continuous in $x_1\in\mathcal{X}_1$ for any $x_2$. With the Jensen's inequality, we obtain
\begin{align}
  &\quad U(\hat{x}_{1,i}(t),x_2)\notag\\
   &\le \frac{1}{t}\sum_{s=0}^{t-1}U(x_{1,i}(s),x_2)\notag\\
  &= \frac{1}{t}\sum_{s=0}^{t-1}[U(\bar{x}_1(s),x_2) - U(\bar{x}_1(s),x_2) + U(x_{1,i}(s),x_2)]\notag\\
  &\le \frac{1}{t}\sum_{s=0}^{t-1}[U(\bar{x}_1(s),x_2) + L\|\bar{x}_1(s) - x_{1,i}(s)\|]\notag\\
  &= \frac{1}{t}\sum_{s=0}^{t-1}\Big[\frac{1}{n_2}\sum_{i=1}^{n_2}\big(f_{2,i}(u_{1,i}(s),x_2) -f_{2,i}(\bar{x}_1(s),x_2)\notag\\
  &\quad - f_{2,i}(u_{1,i}(s),x_2)\big) + L\|\bar{x}_1(s) - x_{1,i}(s)\|\Big]\notag\\
  &\le \frac{1}{t}\sum_{s=0}^{t-1}\Big[\frac{1}{n_2}\sum_{i=1}^{n_2}\big(-f_{2,i}(u_{1,i}(s),x_2) + L\|u_{1,i}(s) - \bar{x}_1(s)\|\big)\notag\\
  &\qquad + L\|\bar{x}_1(s) - x_{1,i}(s)\|\Big].\label{U_1_hat}
\end{align}
Moreover, from \eqref{sub_gradient} and the convexity of $f_{2,i}$ with respect to $x_2$,
\begin{align}
  &\quad -f_{2,i}(u_{1,i}(s),x_2)\notag\\ &= -f_{2,i}(u_{1,i}(s),x_2) + f_{2,i}(u_{1,i}(s),v_{2,i}(s))\notag\\
  &\quad - f_{2,i}(u_{1,i}(s),v_{2,i}(s))\notag\\
  &\le \langle g_{2,i}(s),v_{2,i}(s) - x_2\rangle - f_{2,i}(u_{1,i}(s),v_{2,i}(s)).\label{f_2}
\end{align}
Substituting \eqref{f_2} into \eqref{U_1_hat} yields
\begin{align}
  U(\hat{x}_{1,i}(t),x_2) &\le \frac{1}{t}\sum_{s=0}^{t-1}\frac{1}{n_2}\sum_{i=1}^{n_2}\langle g_{2,i}(s),v_{2,i}(s) - x_2\rangle\notag\\
  &\quad - \frac{1}{t}\sum_{s=0}^{t-1}\frac{1}{n_2}\sum_{i=1}^{n_2}f_{2,i}(u_{1,i}(s),v_{2,i}(s))\notag\\
  &\quad + \frac{1}{t}\sum_{s=0}^{t-1}\left[\frac{1}{n_2}\sum_{i=1}^{n_2}(L\|u_{1,i}(s) - \bar{x}_1(s)\|)\right]\notag\\
  &\quad  + \frac{1}{t}\sum_{s=0}^{t-1}\left[L\|\bar{x}_1(s) - x_{1,i}(s)\|\right].\label{U_1_hat_f}
\end{align}
Similarly,
\begin{align}
  &\quad -U(x_1,\hat{x}_{2,j}(t))\notag\\
   &\le \frac{1}{t}\sum_{s=0}^{t-1}\left[\frac{1}{n_1}\sum_{i=1}^{n_1}(-f_{1,i}(x_1,u_{2,i}(s) + L\|u_{2,i}(s) - \bar{x}_2(s)\|)\right]\notag\\
  &\quad + \frac{1}{t}\sum_{s=0}^{t-1}\left[L\|\bar{x}_2(s) - x_{2,j}(s)\|\right],
\end{align}
and
\begin{equation}
  -f_{1,i}(x_1,u_{2,i}(s)) \le \langle g_{1,i}(s),v_{1,i}(s) - x_1\rangle - f_{1,i}(v_{1,i}(s),u_{2,i}(s)).
\end{equation}
Consequently,
\begin{align}
  &\quad-U(x_1,\hat{x}_{2,j}(t))\notag\\ &\le \frac{1}{t}\sum_{s=0}^{t-1}\frac{1}{n_1}\sum_{i=1}^{n_1}\langle g_{1,i}(s),v_{1,i}(s) - x_1\rangle\notag\\
  &\quad - \frac{1}{t}\sum_{s=0}^{t-1}\frac{1}{n_1}\sum_{i=1}^{n_1}f_{1,i}(v_{1,i}(s),u_{2,i}(s))\notag\\
  &\quad + \frac{1}{t}\sum_{s=0}^{t-1}\left[\frac{1}{n_1}\sum_{i=1}^{n_1}L\|u_{2,i}(s) - \bar{x}_2(s)\|\right]\notag\\
  &\quad + \frac{1}{t}\sum_{s=0}^{t-1}[L\|\bar{x}_2(s) - x_{2,j}(s)\|].\label{U_2_hat_f}
\end{align}
It follows by Assumption \ref{asm1}(iii) that
\begin{align}
  &\quad-f_{2,i}(u_{1,i}(s),v_{2,i}(s))\notag\\ &= -f_{2,i}(u_{1,i}(s),v_{2,i}(s)) + f_{2,i}(\bar{x}_1(s),v_{2,i}(s))\notag\\
  &\quad - f_{2,i}(\bar{x}_1(s),v_{2,i}(s)) + f_{2,i}(\bar{x}_1(s),\bar{x}_2(s))\notag\\
  &\quad - f_{2,i}(\bar{x}_1(s),\bar{x}_2(s))\notag\\
  &\le L\|\bar{x}_1(s) - u_{1,i}(s)\| + L\|\bar{x}_2(s) - v_{2,i}(s)\|\notag\\
  &\quad - f_{2,i}(\bar{x}_1(s),\bar{x}_2(s)),\label{f_2_2}
\end{align}
and
\begin{align}\label{f_1_2}
  &\quad -f_{1,i}(v_{1,i}(s),u_{2,i}(s))\notag\\
  & \le L\|\bar{x}_2(s) - u_{2,i}(s)\| + L\|\bar{x}_1(s) - v_{1,i}(s)\|\notag\\
  &\quad - f_{1,i}(\bar{x}_1(s),\bar{x}_2(s)).
\end{align}
Then, since $f_1(\bar{x}_1(s),\bar{x}_2(s)) + f_2(\bar{x}_1(s),\bar{x}_2(s)) = 0$,
\begin{align}
  &\quad -\frac{1}{n_2}\sum_{i=1}^{n_2}f_{2,i}(u_{1,i}(s),v_{2,i}(s)) - \frac{1}{n_1}\sum_{i=1}^{n_1}f_{1,i}(v_{1,i}(s),u_{2,i}(s))\notag\\
  &\le \frac{1}{n_2}\sum_{i=1}^{n_2}(L\|\bar{x}_1(s) - u_{1,i}(s)\| + L\|\bar{x}_2(s) - v_{2,i}(s)\|)\notag\\
  &\quad + \frac{1}{n_1}\sum_{i=1}^{n_1}(L\|\bar{x}_2(s) - u_{2,i}(s)\| + L\|\bar{x}_1(s) - v_{1,i}(s)\|).\label{sum_1}
\end{align}
According to Lemma \ref{lem5} and $K_l\triangleq \frac{1}{\sigma_l}\left(\frac{n_lL\Gamma_l}{1 - \theta_l} + 2L\right)$, for $l = 1,2$,
\begin{align}
  \|\bar{x}_l(s) - x_{l,i}(s)\| &\le K_l\alpha + n_l\Gamma_l\theta_l^{s-1}\Lambda_l,\label{bound_3_1}\\
  \|\bar{x}_l(s) - v_{l,i}(s)\| &\le K_l\alpha + n_l\Gamma_l\theta_l^{s-1}\Lambda_l,\label{bound_4_1}\\
  \|\bar{x}_l(s) - u_{l,i}(s)\| &\le K_l\alpha + n_l\Gamma_l\theta_l^{s-1}\Lambda_l.\label{bound_5_1}
\end{align}
Applying \eqref{bound_4_1} and \eqref{bound_5_1} in \eqref{sum_1}, we obtain
\begin{align}
  & -\frac{1}{n_2}\sum_{i=1}^{n_2}f_{2,i}(u_{1,i}(s),v_{2,i}(s)) - \frac{1}{n_1}\sum_{i=1}^{n_1}f_{1,i}(v_{1,i}(s),u_{2,i}(s))\notag\\
  &\le 2L(K_1 + K_2)\alpha + 2L\sum_{l=1}^2n_l\Gamma_l\theta_l^{s-1}\Lambda_l.\label{sum_2}
\end{align}
Adding \eqref{U_1_hat_f}, \eqref{U_2_hat_f} and using \eqref{bound_3_1}, \eqref{bound_5_1}, \eqref{sum_2}, we derive
\begin{align}
  &\quad U(\hat{x}_{1,i}(t),x_2) - U(x_1,\hat{x}_{2,j}(t))\notag\\ &\le \frac{1}{t}\sum_{s=0}^{t-1}\frac{1}{n_2}\sum_{i=1}^{n_2}\langle g_{2,i}(s),v_{2,i}(s) - x_2\rangle + 4L(K_1 + K_2)\alpha\notag\\
  &\quad + \frac{1}{t}\sum_{s=0}^{t-1}\frac{1}{n_1}\sum_{i=1}^{n_1}\langle g_{1,i}(s),v_{1,i}(s) - x_1\rangle\notag\\
  &\quad + 4L\frac{1}{t}\sum_{s=0}^{t-1}\sum_{l=1}^2n_l\Gamma_l\theta_l^{s-1}\Lambda_l.\label{sum_3}
\end{align}
By \eqref{lem_bound_5}, for $l = 1,2$, for all $x_l\in\mathcal{X}_l$,
\begin{equation}\label{sum_inner}
  \frac{1}{t}\sum_{s=0}^{t-1}\frac{1}{n_l}\sum_{i=1}^{n_l}\langle g_{l,i}(s),x_{l,i}(s+1) - x_l\rangle \le \frac{\Upsilon_l^2}{t\alpha}.
\end{equation}
This together with \eqref{lem_bound_1} yields
\begin{align}
  &\quad \frac{1}{t}\sum_{s=0}^{t-1}\frac{1}{n_l}\sum_{i=1}^{n_l}\langle g_{l,i}(s),v_{l,i}(s) - x_l\rangle\notag\\
  &= \frac{1}{t}\sum_{s=0}^{t-1}\frac{1}{n_l}\sum_{i=1}^{n_l}\langle g_{l,i}(s),v_{l,i}(s) - x_{l,i}(s+1)\rangle\notag\\
  &\quad + \frac{1}{t}\sum_{s=0}^{t-1}\frac{1}{n_l}\sum_{i=1}^{n_l}\langle g_{l,i}(s),x_{l,i}(s+1) - x_l\rangle\notag\\
  &\le \frac{\Upsilon_l^2}{t\alpha} + \frac{L^2}{\sigma_l}\alpha,\label{sum_inner_f}
\end{align}
Since $(x_1^{\ast},x_2^{\ast})$ is a NE,
  \[\max_{x_2\in\mathcal{X}_2}U(\hat{x}_{1,i}(t),x_2) \ge U(\hat{x}_{1,i}(t),x_2^{\ast}) \ge U(x_1^{\ast},x_2^{\ast}),\]
  and
  \[\min_{x_1\in\mathcal{X}_1}U(x_1,\hat{x}_{2,j}(t)) \le U(x_1^{\ast},\hat{x}_{2,j}(t)) \le U(x_1^{\ast},x_2^{\ast}).\]
Substituting \eqref{sum_inner_f} into \eqref{sum_3}, we obtain
\begin{align*}
  &\quad|U(\hat{x}_{1,i}(t),\hat{x}_{2,j}(t)) - U(x_1^{\ast},x_2^{\ast})|\\ &\le \max_{x_1,x_2}\{U(\hat{x}_{1,i}(t),x_2) - U(x_1,\hat{x}_{2,j}(t))\}\\
  &\le \sum_{l=1}^2\left(\frac{\Upsilon_l^2}{t\alpha} + \frac{L^2}{\sigma_l}\alpha\right) + 4L(K_1 + K_2)\alpha\\
  &\quad + 4L\frac{1}{t}\sum_{s=0}^{t-1}\sum_{l=1}^2n_l\Gamma_l\theta_l^{s-1}\Lambda_l,
\end{align*}
which completes the proof.
\end{proof}
\begin{remark}
  Theorem \ref{thm3} shows that the cost value of the averaged iterates $U(\hat{x}_{1,i}(t),\hat{x}_{2,j}(t))$ converges to $U(x_1^{\ast},x_2^{\ast})$ within error level $(\frac{2L^2}{\sigma_1} + 4L(K_1 + K_2))\alpha$ with rate $O(\frac{1}{t} + \frac{1}{t\alpha})$, which is comparable to the rate established in \cite{nedic2009subgradient} for the centralized saddle point problems.
\end{remark}

\section{Subnetwork Zero-sum Finite-strategy Games}
In this section, we consider a subnetwork zero-sum finite-strategy game as a concrete application of Algorithm \ref{alg1}. We give the simplified algorithm in Section \uppercase\expandafter{\romannumeral 4}-A and then provide convergence results in Section \uppercase\expandafter{\romannumeral 4}-B.
\subsection{Simplified Algorithm}
For each subnetwork $\Sigma_l$ ($l\in\{1,2\}$), suppose that its (pure) action set is $\mathcal{A}_l = \{a_l^{(1)},\dots,a_{l}^{(M_l)}\}$ with $M_l$ being an integer. To obtain a continuous cost function and apply Algorithm \ref{alg1}, we consider its mixed strategy, which has also been studied in \cite{freund1999adaptive,mertikopoulos2019learning}. The mixed strategy, denoted by $x_l$, belongs to the corresponding mixed strategy set
\begin{align*}
  \mathcal{X}_l &= \Delta(\mathcal{A}_l)\\
  &:= \{x_l = (x_l^{(1)},\dots,x_l^{(M_l)})\mid \sum_{p=1}^{M_l}x_{l}^{(p)} = 1, 0\le x_l^{(p)}\le 1\}.
\end{align*}
Let $f_{1,i}(a_1^{(p)},a_2^{(q)})$ be the cost value of agent $i$ at the pure action profile $(a_1^{(p)},a_2^{(q)})$, and then the expected cost value at the mixed strategy profile $(x_1,x_2)$ is a multilinear function defined as follows.
\begin{align*}
  U(x_1,x_2) &= \frac{1}{n_1}\sum_{i=1}^{n_1}f_{1,i}(x_1,x_2)\\
  & \triangleq \frac{1}{n_1}\sum_{i=1}^{n_1}\sum_{p=1}^{M_1}\sum_{q=1}^{M_2}x_1^{(p)}x_2^{(q)}f_{1,i}(a_1^{(p)},a_2^{(q)}).
\end{align*}
Therefore, Assumption \ref{asm1} holds. Similarly, a strategy profile $x^{\ast} = (x_1^{\ast},x_2^{\ast})$ is a mixed-strategy NE of a subnetwork zero-sum finite-strategy game if \eqref{NE_def} holds.

Because $\mathcal{X}_l$ is a simplex, we consider the following negative entropy regularizer
\[\psi_l(x_l) = \sum_{p=1}^{M_l}x_l^{(p)}\log x_l^{(p)}.\]
Then $\psi_l$ is $1$-strongly convex with respect to $l_1$ norm (when there is no confusion, we use the norm $\|\cdot\|$ in Sections \uppercase\expandafter{\romannumeral 2} and \uppercase\expandafter{\romannumeral 3} as $l_1$ norm), and the Bregman divergence is given by
\[D_{\psi_l}(x_l,y_l) = \sum_{p=1}^{M_l}x_l^{(p)}\log\frac{x_l^{(p)}}{y_l^{(p)}}.\]
Therefore, Assumption \ref{asm3} holds. Moreover, through some calculations \cite{bravo2018bandit}, the update rule \eqref{update_1} can be simplified as \eqref{update_finite_1}.
The complete learning algorithm is summarized in Algorithm \ref{alg2}, which can be viewed as a distributed version of the classic multiplicative-weight (MW) algorithm \cite{freund1999adaptive}.
\begin{algorithm}[H]\caption{Distributed MW algorithm}\label{alg2}
{\it Initialize:} For $l\in\{1,2\}$, let $x_{l,i}(0) = \frac{1}{M_l}(1,\dots,1)  \in \mathcal{X}_l$.

{\it Iterate until $t\ge T$:}

\quad{\bf Communication and distributed averaging:} For $l\in\{1,2\}$,\\
\indent $\qquad$$\qquad$compute the estimates $v_{l,i}(t)$ based on \eqref{commu_v}\\
\indent $\qquad$$\qquad$compute the estimates $u_{l,i}(t)$ based on \eqref{commu_u}\\

\quad{\bf Update of $x_{l,i}(t)$:} For $l\in\{1,2\}$, $i\in\mathcal{V}_l$,\\
\indent $\qquad$$\qquad$compute the gradients $g_{l,i}(t)\in\mathbb{R}^{M_l}$:
\begin{align}
  g_{1,i}^{(p)}(t) &= f_{1,i}(a_1^{(p)},u_{2,i}(t)),\quad p = 1,\dots, M_1\label{gradient_finite_1}\\
  g_{2,i}^{(p)}(t) &= f_{2,i}(u_{1,i}(t),a_2^{(p)}),\quad p = 1,\dots, M_2\label{gradient_finite_2}
\end{align}
\indent $\qquad$$\qquad$update the estimates $x_{l,i}(t+1)$ by
\begin{equation}\label{update_finite_1}
  x_{l,i}^{(p)}(t+1) = \frac{v_{l,i}^{(p)}(t)\exp(-\alpha(t)g_{l,i}^{(p)}(t))}{\sum_{k=1}^{M_l}v_{l,i}^{(k)}(t)\exp(-\alpha(t)g_{l,i}^{(k)}(t))}, p = 1,\dots, M_l
\end{equation}
\end{algorithm}
\begin{remark}
  The update rule \eqref{update_finite_1} displays the advantage of the distributed mirror descent algorithm compared to the distributed projected subgradient descent algorithm \cite{lou2015nash}. We may avoid calculating the projection onto a simplex by choosing a negative entropy regularizer in the mirror descent algorithm.
\end{remark}
\subsection{Convergence Results}
Note that if $U$ is not strictly convex-concave, then Theorem \ref{thm2} cannot be directly applied. \cite{mertikopoulos2018optimistic} showed that even if the cost function $U$ is bilinear and admits an interior NE, the iterates of mirror descent are not convergent. Therefore, we need a stronger assumption to establish a final-iteration convergence of Algorithm \ref{alg2}. Through an analysis similar to \cite{lou2015nash}, we obtain the following convergence result.
\begin{thm}\label{thm4}
  Under Assumptions \ref{asm2} and \ref{asm4} hold, if the set of NE contains an interior point, then Algorithm \ref{alg2} generates a sequence that converges to a mixed-strategy NE for the subnetwork zero-sum finite-strategy game.
\end{thm}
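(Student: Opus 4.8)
The plan is to recycle the Lyapunov recursion already obtained in the proof of Theorem \ref{thm2}. The bounds \eqref{network_1} and \eqref{network_2} only used convexity of each $f_{1,i}(\cdot,x_2)$, concavity of each $f_{2,i}(x_1,\cdot)$, and Assumptions \ref{asm1}--\ref{asm3}, all of which are in force here since $U$ and each $f_{l,i}$ are multilinear and $\psi_l$ is the negative entropy; moreover the multiplicative‑weight update \eqref{update_finite_1} keeps every $x_{l,i}(t)$ in the relative interior of the simplex, so each $D_{\psi_l}(x_l,x_{l,i}(t))$ remains finite. Writing $\tilde V(t,x_1,x_2)\triangleq\frac1{n_1}\sum_iD_{\psi_1}(x_1,x_{1,i}(t))+\frac1{n_2}\sum_iD_{\psi_2}(x_2,x_{2,i}(t))$, adding \eqref{network_1}/$n_1$ and \eqref{network_2}/$n_2$ gives, for \emph{every} $(x_1,x_2)\in\mathcal X_1\times\mathcal X_2$,
\[\tilde V(t{+}1,x_1,x_2)\le\tilde V(t,x_1,x_2)-\alpha(t)\big(U(\bar x_1(t),x_2)-U(x_1,\bar x_2(t))\big)+\alpha(t)L\sum_{l=1}^2\tfrac1{n_l}\sum_ie_{l,i}(t)+\alpha^2(t)L^2\sum_{l=1}^2\tfrac1{2\sigma_l},\]
which is the general‑comparator form of \eqref{V_iter}; denote by $b_t$ the two error terms, so that $\sum_tb_t<\infty$ by exactly the estimates \eqref{converge_cond_1}--\eqref{converge_cond} together with Assumption \ref{asm4}.

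The new ingredient is an indifference argument exploiting the interior equilibrium. Let $x^{\ast}=(x_1^{\ast},x_2^{\ast})$ be a Nash equilibrium lying in the interior of $\mathcal X_1\times\mathcal X_2$. Because $U(\cdot,x_2^{\ast})$ is affine on the simplex $\mathcal X_1$ and attains its minimum at the interior point $x_1^{\ast}$, it is constant, i.e. $U(x_1,x_2^{\ast})=U(x_1^{\ast},x_2^{\ast})$ for all $x_1$; symmetrically $U(x_1^{\ast},x_2)=U(x_1^{\ast},x_2^{\ast})$ for all $x_2$. Hence, evaluating the recursion at $(x_1^{\ast},x_2^{\ast})$ makes the progress term vanish, $U(\bar x_1(t),x_2^{\ast})-U(x_1^{\ast},\bar x_2(t))=0$, and Lemma \ref{lem9} shows that $\tilde V(t,x^{\ast})$ converges to a finite limit. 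Evaluating the recursion instead at an arbitrary Nash equilibrium $\hat x$ gives a nonnegative progress term by \eqref{U_diff}, so Lemma \ref{lem9} again yields convergence of $\tilde V(t,\hat x)$ and, since $\sum_t\alpha(t)=\infty$, that $\liminf_{t}\big(U(\bar x_1(t),\hat x_2)-U(\hat x_1,\bar x_2(t))\big)=0$ for every Nash equilibrium $\hat x$.

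It remains to upgrade this to actual convergence despite non‑uniqueness. First, Lemma \ref{lem5} gives consensus, $\|x_{l,i}(t)-\bar x_l(t)\|\le H_l(t)\to0$, so it suffices to study $\{\bar x(t)\}$. Introduce the gap $G(x_1,x_2)\triangleq\max_{y_2\in\mathcal X_2}U(x_1,y_2)-\min_{y_1\in\mathcal X_1}U(y_1,x_2)$; the interior‑equilibrium identities above force $\max_{y_2}U(x_1,y_2)\ge U(x_1^{\ast},x_2^{\ast})\ge\min_{y_1}U(y_1,x_2)$, so $G\ge0$ with $G=0$ exactly on the equilibrium set, which is a polytope with finitely many vertices. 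The next step is to show that some limit point $\tilde x$ of $\{\bar x(t)\}$ is a Nash equilibrium, by combining the $\liminf$ statements above (taken at the vertices of the equilibrium polytope) to drive $G(\bar x(t))$ to zero along a subsequence, following the analysis of \cite{lou2015nash}. Once $\tilde x$ is known to be an equilibrium, $\tilde V(t,\tilde x)$ converges, and along the subsequence realizing $\tilde x$ consensus gives $x_{l,i}(t)\to\tilde x_l$, hence $\tilde V(t,\tilde x)\to D_{\psi_1}(\tilde x_1,\tilde x_1)+D_{\psi_2}(\tilde x_2,\tilde x_2)=0$; therefore $\tilde V(t,\tilde x)\to0$ and, by \eqref{breg_prop2}, $\|x_{l,i}(t)-\tilde x_l\|^2\le\frac2{\sigma_l}D_{\psi_l}(\tilde x_l,x_{l,i}(t))\to0$, so the whole sequence converges to the Nash equilibrium $\tilde x$.

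The genuine obstacle is precisely the middle assertion that at least one limit point of $\{\bar x(t)\}$ is an equilibrium. In Theorem \ref{thm2} strict convex‑concavity let us identify a limit point with the unique equilibrium; here that is unavailable, and the $\liminf$ inequalities only control $U(\bar x_1(t),\hat x_2)-U(\hat x_1,\bar x_2(t))$ for a \emph{fixed} equilibrium $\hat x$, which is weaker than controlling the gap $G(\bar x(t))$. Making the passage from the former to the latter quantitative — using the interior equilibrium through the indifference identities and the finiteness of the vertex set of the equilibrium polytope — is the delicate point, and is exactly where the argument must specialize the generic distributed‑mirror‑descent machinery to the multilinear, finite‑strategy setting in the manner of \cite{lou2015nash}.
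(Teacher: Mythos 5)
Your proposal has a genuine gap, and it starts with a misreading of the hypothesis. The paper's assumption is that the \emph{set of Nash equilibria} $\mathcal{X}_1^{\ast}\times\mathcal{X}_2^{\ast}$ contains an interior point, i.e.\ a whole ball $\mathbb{B}((x_1^{+},x_2^{+}),\epsilon)\subset\mathcal{X}_1^{\ast}\times\mathcal{X}_2^{\ast}$ of equilibria; you instead take a single NE lying in the interior of the strategy space $\mathcal{X}_1\times\mathcal{X}_2$. Under your reading the statement is false: the paper itself cites \cite{mertikopoulos2018optimistic} to note that a bilinear game with an interior NE (e.g.\ matching pennies) need not have convergent mirror-descent iterates. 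Your ``indifference'' identities make the progress term $U(\bar{x}_1(t),x_2^{\ast})-U(x_1^{\ast},\bar{x}_2(t))$ vanish identically, so evaluating the recursion at the interior NE only shows that $\tilde V(t,x^{\ast})$ converges to \emph{some} finite value (the iterates stay on a Bregman ``sphere'' around the equilibrium and may rotate); it yields no mechanism to force a limit point into the equilibrium set. Likewise, the $\liminf$ statements you derive at fixed equilibria give a (comparator-dependent) subsequence for each $\hat x$, which cannot be assembled into the claim that a single limit point is an equilibrium. You candidly flag exactly this step as the ``genuine obstacle'' and leave it unresolved, so the proposal does not constitute a proof.

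The paper closes this gap by using the ball of equilibria in an essentially different way: since the Lyapunov argument from Theorem \ref{thm2} shows $V(t,x_1,x_2)$ converges for \emph{every} $(x_1,x_2)$ in the ball (strict convexity-concavity was never used for that part), comparing the limits along two convergent subsequences with limit points $(\tilde x_1,\tilde x_2)$ and $(\hat x_1,\hat x_2)$ gives $D_{\psi_1}(x_1,\tilde x_1)+D_{\psi_2}(x_2,\tilde x_2)=D_{\psi_1}(x_1,\hat x_1)+D_{\psi_2}(x_2,\hat x_2)$ for all $(x_1,x_2)$ in the ball; fixing $x_2=x_2^{+}$ and differentiating in $x_1$ yields $\nabla\psi_1(\tilde x_1)=\nabla\psi_1(\hat x_1)$, and strong convexity of $\psi_1$ forces $\tilde x_1=\hat x_1$ (similarly $\tilde x_2=\hat x_2$). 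Hence all limit points coincide, the whole sequence converges, and only then is the $\liminf$ argument applied --- crucially with \emph{arbitrary} comparators $x_1\in\mathcal{X}_1$ in \eqref{network_1} (and $x_2\in\mathcal{X}_2$ in \eqref{network_2}), not just equilibria --- so that continuity of $U$ identifies the common limit $(\check x_1,\check x_2)$ as a Nash equilibrium. If you want to repair your write-up, replace the indifference argument by this two-limit-point comparison over the ball of equilibria, and run your saddle-point inequalities with arbitrary comparators after convergence of the sequence has been secured.
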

\begin{proof}
  Denote by $\mathcal{X}_1^{\ast}\times\mathcal{X}_2^{\ast}$ the mixed-strategy NE set, and suppose that $(x_1^{+},x_2^{+})$ is an interior point of $\mathcal{X}_1^{\ast}\times\mathcal{X}_2^{\ast}$. Then there exists $\epsilon > 0$, such that
  \[\mathbb{B}((x_1^{+},x_2^{+}),\epsilon)\subset \mathcal{X}_1^{\ast}\times\mathcal{X}_2^{\ast}.\]
  Let $(\tilde{x}_1,\tilde{x}_2)$, $(\hat{x}_1,\hat{x}_2)$ be any two limit points of the sequence $\{\bar{x}_1(t),\bar{x}_2(t)\}$. By \eqref{lem_bound_2}, they are also the limit points of $\{x_{1,i}(t),x_{2,i}(t)\}$, and we set the corresponding convergent subsequences as $\{(x_{1,i}(t_r),x_{2,i}(t_r)\}$, $\{(x_{1,i}(t_s),x_{2,i}(t_s)\}$. Recall from the proof of Theorem 1 that the strict convexity-concavity of $U$ is not used when proving that $V(t,x_1^{\ast},x_2^{\ast})$ converges to a finite number. Thus, for all $(x_1^{\ast},x_2^{\ast})\in\mathcal{X}_1^{\ast}\times\mathcal{X}_2^{\ast}$, $V(t,x_1^{\ast},x_2^{\ast})$ still converges to a finite number. Therefore, for all $(x_1,x_2)\in\mathbb{B}((x_1^{+},x_2^{+}),\epsilon)$, by the definition of $V(t_r,x_1,x_2)$,
  \begin{align}
    D_{\psi_1}(x_1,\tilde{x}_1) + D_{\psi_2}(x_2,\tilde{x}_2) &= \lim_{r\to\infty}V(t_r,x_1,x_2)\notag\\
    & = \lim_{t\to\infty}V(t,x_1,x_2)\notag\\
    &= \lim_{s\to\infty}V(t_s,x_1,x_2)\notag\\
    & = D_{\psi_1}(x_1,\hat{x}_1) + D_{\psi_2}(x_2,\hat{x}_2).\label{equality}
  \end{align}
  Set $x_2 = x_2^{+}$ in \eqref{equality}, and then for all $x_1\in\mathbb{B}(x_1^{+},\epsilon)$,
  \[D_{\psi_1}(x_1,\tilde{x}_1) - D_{\psi_1}(x_1,\hat{x}_1) = D_{\psi_2}(x_2^{+},\hat{x}_2) - D_{\psi_2}(x_2^{+},\tilde{x}_2).\]
  Taking the derivative with respect to $x_1$ on both sides, we obtain
  \[\nabla\psi_1(x_1) - \nabla\psi_1(\tilde{x}_1) = \nabla\psi_1(x_1) - \nabla\psi_1(\hat{x}_1).\]
  Therefore, by the strong convexity of $\psi_1$,
  \[0 = \langle \nabla\psi_1(\tilde{x}_1) - \nabla\psi_1(\hat{x}_1), \tilde{x}_1 - \hat{x}_1\rangle \ge \|\tilde{x}_1 - \hat{x}_1\|_1^2.\]
  Then $\tilde{x}_1 = \hat{x}_1$. Similarly, $\tilde{x}_2 = \hat{x}_2$, i.e., the limit of $\{(\bar{x}_1(t),\bar{x}_2(t))\}$ exists. This together with \eqref{lem_bound_2} implies the existence of the limit of $\{(x_{1,i}(t),x_{2,i}(t))\}$, denoted by $(\check{x}_1,\check{x}_2)$.

  Then we prove $(\check{x}_1,\check{x}_2)\in\mathcal{X}_1^{\ast}\times\mathcal{X}_2^{\ast}$. By \eqref{network_1}, and utilizing $\sum_{t=1}^{\infty}\alpha^2(t) < \infty$ and $\sum_{t=1}^{\infty}\alpha(t)e_{1,i}(t) < \infty$, the following holds for any $x_1\in\mathcal{X}_1$,
  \[\sum_{t=0}^{\infty}\alpha(t)(U(\bar{x}_1(t),\bar{x}_2(t)) - U(x_1,\bar{x}_2(t))) < \infty.\]
Moreover, since $\sum_{t=0}^{\infty}\alpha(t) = \infty$,
  \[\lim\inf_{t\to\infty}(U(\bar{x}_1(t),\bar{x}_2(t)) - U(x_1,\bar{x}_2(t)))\le 0.\]
  The continuity of $U$ yields
  \[U(\check{x}_1,\check{x}_2) \le U(x_1,\check{x}_2),\ \forall x_1\in\mathcal{X}_1.\]
  Similarly, by \eqref{network_2},
  \[U(\check{x}_1,x_2) \le U(\check{x}_1,\check{x}_2),\ \forall x_2\in\mathcal{X}_2.\]
  By the definition of mixed-strategy NE, $(\check{x}_1,\check{x}_2)\in\mathcal{X}_1^{\ast}\times\mathcal{X}_2^{\ast}$.
\end{proof}
Moreover, motivated by Theorem \ref{thm3}, we can generalize the definition of the running average actions $\hat{x}_{l,i}(t)$ to the case of diminishing step-size, i.e.,
\[\hat{x}_{l,i}(t) = \frac{1}{\sum_{s=0}^{t-1}\alpha(s)}\sum_{s=0}^{t-1}\alpha(s)x_{l,i}(s),\quad\text{for}\ t\ge 1, l=1,2.\]
Then we obtain the following result for Algorithm \ref{alg2}.
\begin{thm}\label{thm5}
  Under Assumptions \ref{asm2} and \ref{asm4}, Algorithm \ref{alg2} generates an ergodic average sequence $(\hat{x}_{1,i}(t),\hat{x}_{2,j}(t))$ that  converges to the mixed-strategy NE set.
\end{thm}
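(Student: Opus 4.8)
The plan is to reduce the claim to showing that the Nikaido--Isoda duality gap of the ergodic averages vanishes. First observe that Algorithm~\ref{alg2} is exactly Algorithm~\ref{alg1} specialized to the finite-strategy game: the mixed-strategy sets $\mathcal{X}_l=\Delta(\mathcal{A}_l)$ are compact and convex, the multilinear costs $f_{l,i}$ are convex-concave (resp.\ concave-convex) and Lipschitz, and the negative-entropy regularizer yields a Kullback--Leibler Bregman divergence that is convex in its second argument and satisfies reciprocity, so (as observed before Algorithm~\ref{alg2}) Assumptions~\ref{asm1} and \ref{asm3} hold automatically. Consequently Lemmas~\ref{lem4}--\ref{lem6} and every relation derived in the proofs of Theorems~\ref{thm2} and \ref{thm3} are at our disposal. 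Set $S_t\triangleq\sum_{s=0}^{t-1}\alpha(s)$, so $S_t\to\infty$ by Assumption~\ref{asm4}, and for $i\in\mathcal{V}_1$, $j\in\mathcal{V}_2$ define
\[
\Phi_{ij}(t)\triangleq\max_{x_2\in\mathcal{X}_2}U(\hat{x}_{1,i}(t),x_2)-\min_{x_1\in\mathcal{X}_1}U(x_1,\hat{x}_{2,j}(t)).
\]
By compactness of $\mathcal{X}_1,\mathcal{X}_2$ and continuity of $U$, $\Phi_{ij}(t)$ is finite, and $\Phi_{ij}(t)\ge 0$ since $\max_{x_2}U(\hat{x}_{1,i}(t),x_2)\ge U(\hat{x}_{1,i}(t),\hat{x}_{2,j}(t))\ge\min_{x_1}U(x_1,\hat{x}_{2,j}(t))$; moreover $\Phi_{ij}(t)=0$ forces both inequalities to be equalities, which by \eqref{NE_def} means $(\hat{x}_{1,i}(t),\hat{x}_{2,j}(t))$ is a mixed-strategy NE. Hence it suffices to show $\Phi_{ij}(t)\to 0$ for every $i,j$.

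To bound $\Phi_{ij}(t)$ I would repeat the chain of inequalities \eqref{U_1_hat}--\eqref{sum_3} from the proof of Theorem~\ref{thm3} with two modifications: (i) apply Jensen's inequality to the $\alpha$-weighted convex combination $\hat{x}_{l,i}(t)=S_t^{-1}\sum_{s=0}^{t-1}\alpha(s)x_{l,i}(s)$, so that every per-step term is multiplied by $\alpha(s)$ and divided by $S_t$ rather than by $t$; and (ii) replace the constant $\alpha$ by $\alpha(s)$ and the uniform consensus estimates \eqref{bound_3_1}--\eqref{bound_5_1} by the time-dependent bounds $\|\bar{x}_l(s)-x_{l,i}(s)\|,\|\bar{x}_l(s)-v_{l,i}(s)\|,\|\bar{x}_l(s)-u_{l,i}(s)\|\le H_l(s)$ from Lemma~\ref{lem5}. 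The subgradient inequalities \eqref{f_2}, \eqref{f_2_2}, \eqref{f_1_2} and the zero-sum identity $f_1(\bar{x}_1(s),\bar{x}_2(s))+f_2(\bar{x}_1(s),\bar{x}_2(s))=0$ carry over unchanged. For the inner-product terms I would split $\langle g_{l,i}(s),v_{l,i}(s)-x_l\rangle=\langle g_{l,i}(s),v_{l,i}(s)-x_{l,i}(s+1)\rangle+\langle g_{l,i}(s),x_{l,i}(s+1)-x_l\rangle$ as in \eqref{sum_inner_f}: Lemma~\ref{lem4} bounds the first piece by $L^2\alpha(s)/\sigma_l$, while for the second the optimality condition \eqref{opt_cond}, the three-point identity \eqref{breg_prop1}, convexity of $D_{\psi_l}(x_l,\cdot)$ (Assumption~\ref{asm3}) and column stochasticity give, after telescoping over $s$, $\sum_{s=0}^{t-1}\alpha(s)\,\tfrac{1}{n_l}\sum_{i}\langle g_{l,i}(s),x_{l,i}(s+1)-x_l\rangle\le\Upsilon_l^2$ --- the $\alpha$-weighted analogue of Lemma~\ref{lem6}, already implicit in \eqref{network_1}--\eqref{network_2}. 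Collecting terms and taking the supremum over $x_1,x_2$ gives, with $L=\max\{L_{1,1},L_{1,2},L_{2,1},L_{2,2}\}$ and an absolute constant $c$,
\[
\Phi_{ij}(t)\le\frac{1}{S_t}\Big(\Upsilon_1^2+\Upsilon_2^2+\Big(\tfrac{L^2}{\sigma_1}+\tfrac{L^2}{\sigma_2}\Big)\sum_{s=0}^{t-1}\alpha^2(s)+c\sum_{s=0}^{t-1}\alpha(s)\big(H_1(s)+H_2(s)\big)\Big),
\]
a bound that no longer depends on $i,j$.

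It remains to verify that the right-hand side tends to $0$. We have $\sum_{s}\alpha^2(s)<\infty$ by Assumption~\ref{asm4}, and $\sum_{s}\alpha(s)H_l(s)<\infty$: from the explicit form of $H_l$ in Lemma~\ref{lem5}, the term $n_l\Gamma_l\theta_l^{s-1}\Lambda_l$ is summable against the bounded sequence $\alpha(s)\le\alpha(0)$, the term $\tfrac{2}{\sigma_l}L_{l,1}\alpha(s-1)$ gives $\sum_s\alpha(s)\alpha(s-1)\le\sum_s\alpha^2(s-1)<\infty$ since $\alpha$ is non-increasing, and the double-sum term is controlled by exchanging the order of summation exactly as in \eqref{exchange_sum}; this is the same computation as in \eqref{converge_cond_1}. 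Since $S_t\to\infty$, the displayed bound yields $\Phi_{ij}(t)\to 0$.

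Finally, fix $i,j$; the sequence $\{(\hat{x}_{1,i}(t),\hat{x}_{2,j}(t))\}$ lies in the compact set $\mathcal{X}_1\times\mathcal{X}_2$. Let $(\check{x}_1,\check{x}_2)$ be any limit point, along a subsequence $\{t_r\}$ say. By continuity of $U$ and compactness (so that $\hat{x}_1\mapsto\max_{x_2}U(\hat{x}_1,x_2)$ and $\hat{x}_2\mapsto\min_{x_1}U(x_1,\hat{x}_2)$ are continuous),
\[
\max_{x_2\in\mathcal{X}_2}U(\check{x}_1,x_2)-\min_{x_1\in\mathcal{X}_1}U(x_1,\check{x}_2)=\lim_{r\to\infty}\Phi_{ij}(t_r)=0,
\]
hence $U(\check{x}_1,x_2)\le U(\check{x}_1,\check{x}_2)\le U(x_1,\check{x}_2)$ for all $x_1\in\mathcal{X}_1$, $x_2\in\mathcal{X}_2$, i.e.\ $(\check{x}_1,\check{x}_2)\in\mathcal{X}_1^\ast\times\mathcal{X}_2^\ast$. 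As every limit point of the bounded sequence lies in the NE set, $\mathrm{dist}\big((\hat{x}_{1,i}(t),\hat{x}_{2,j}(t)),\mathcal{X}_1^\ast\times\mathcal{X}_2^\ast\big)\to 0$, which is the asserted ergodic convergence. The main obstacle is the bookkeeping in the second step --- carrying the $\alpha(s)$ weights consistently through the Theorem~\ref{thm3} estimates and substituting $H_l(s)$ for the constant-step consensus bounds; once that is done, $\Phi_{ij}(t)\to 0$ follows from the summability facts already used for Theorem~\ref{thm2}.
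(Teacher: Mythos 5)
Your proposal follows essentially the same route as the paper's proof: the paper likewise rewrites the Theorem~\ref{thm3} estimates with $\alpha(s)$-weighted Jensen averages, bounds the inner products via Lemma~\ref{lem4} and the weighted (telescoping) analogue of Lemma~\ref{lem6}, controls the consensus terms by the summability computation from \eqref{converge_cond_1}, and concludes by showing the duality-gap function of $(\hat{x}_{1,i}(t),\hat{x}_{2,j}(t))$ vanishes. Your closing limit-point/compactness argument just makes explicit the last step the paper states more briefly, so the two proofs coincide in substance.
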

\begin{proof}
  With analysis similar to that of Theorem \ref{thm3}, for any $(x_1,x_2)\in\mathcal{X}_1\times\mathcal{X}_2$, we obtain
  \begin{align}
    &\quad U(\hat{x}_{1,i}(t),x_2)\notag\\ &\le \frac{1}{\sum_{s=0}^{t-1}\alpha(s)}\sum_{s=0}^{t-1}\frac{1}{n_2}\sum_{i=1}^{n_2}\langle \alpha(s)g_{2,i}(s),v_{2,i}(s) - x_2\rangle\notag\\
    &\quad - \frac{1}{\sum_{s=0}^{t-1}\alpha(s)}\sum_{s=0}^{t-1}\frac{1}{n_2}\sum_{i=1}^{n_2}\alpha(s)f_{2,i}(u_{1,i}(s),v_{2,i}(s))\notag\\
    &\quad + \frac{1}{\sum_{s=0}^{t-1}\alpha(s)}\sum_{s=0}^{t-1}\alpha(s)\left[\frac{1}{n_2}\sum_{i=1}^{n_2}(L\|u_{1,i}(s) - \bar{x}_1(s)\|)\right]\notag\\
    &\quad + \frac{1}{\sum_{s=0}^{t-1}\alpha(s)}\sum_{s=0}^{t-1}\alpha(s)\left[L\|\bar{x}_1(s) - x_{1,i}(s)\|\right],\label{U_1_hat_finite}
  \end{align}
  and
  \begin{align}
    &\quad-U(x_1,\hat{x}_{2,j}(t))\notag\\ &\le \frac{1}{\sum_{s=0}^{t-1}\alpha(s)}\sum_{s=0}^{t-1}\frac{1}{n_1}\sum_{i=1}^{n_1}\langle \alpha(s)g_{1,i}(s),v_{1,i}(s) - x_1\rangle\notag\\
    &\quad - \frac{1}{\sum_{s=0}^{t-1}\alpha(s)}\sum_{s=0}^{t-1}\frac{1}{n_1}\sum_{i=1}^{n_1}\alpha(s)f_{1,i}(v_{1,i}(s),u_{2,i}(s))\notag\\
    &\quad + \frac{1}{\sum_{s=0}^{t-1}\alpha(s)}\sum_{s=0}^{t-1}\alpha(s)\left[\frac{1}{n_1}\sum_{i=1}^{n_1}(L\|u_{2,i}(s) - \bar{x}_2(s)\|)\right]\notag\\
    &\quad + \frac{1}{\sum_{s=0}^{t-1}\alpha(s)}\sum_{s=0}^{t-1}\alpha(s)\left[L\|\bar{x}_2(s) - x_{2,j}(s)\|\right].\label{U_2_hat_finite}
  \end{align}
Adding \eqref{U_1_hat_finite} and \eqref{U_2_hat_finite}, and using \eqref{sum_1} yield
\begin{align}
  &\quad U(\hat{x}_{1,i}(t),x_2) - U(x_1,\hat{x}_{2,j}(t))\notag\\ &\le \frac{1}{\sum_{s=0}^{t-1}\alpha(s)}\Bigg(\sum_{s=0}^{t-1}\frac{1}{n_2}\sum_{i=1}^{n_2}\langle \alpha(s)g_{2,i}(s),v_{2,i}(s) - x_2\rangle\notag\\
  &\quad + \sum_{s=0}^{t-1}\frac{1}{n_1}\sum_{i=1}^{n_1}\langle \alpha(s)g_{1,i}(s),v_{1,i}(s) - x_1\rangle\notag\\
  &\quad + \sum_{s=0}^{t-1}L\alpha(s)\left[\frac{1}{n_2}\sum_{i=1}^{n_2}e_{2,i}(s) + \|\bar{x}_1(s) - x_{1,i}(s)\|\right]\notag\\
  &\quad + \sum_{s=0}^{t-1}L\alpha(s)\left[\frac{1}{n_1}\sum_{i=1}^{n_1}e_{1,i}(s) + \|\bar{x}_2(s) - x_{2,j}(s)\|\right]\Bigg),\label{sum_finite}
\end{align}
with $e_{l,i}(s)$ defined in Theorem \ref{thm2}. From Lemma \ref{lem4} and Lemma \ref{lem6}, for any $(x_1,x_2)\in\mathcal{X}_1\times\mathcal{X}_2$,
\begin{align}
  &\quad\sum_{s=0}^{t-1}\frac{1}{n_l}\sum_{i=1}^{n_l}\langle \alpha(s)g_{l,i}(s),v_{l,i}(s) - x_l\rangle\notag\\
  &= \sum_{s=0}^{t-1}\frac{1}{n_l}\sum_{i=1}^{n_l}\langle \alpha(s)g_{l,i}(s),v_{l,i}(s) - x_{l,i}(s+1)\rangle\notag\\
  &\quad + \sum_{s=0}^{t-1}\frac{1}{n_l}\sum_{i=1}^{n_l}\langle \alpha(s)g_{l,i}(s),x_{l,i}(s+1) - x_l\rangle\notag\\
  &\le R_l^2 + L^2\sum_{s=0}^{t-1}\alpha^2(s).\label{inner_finite}
\end{align}
Furthermore, by \eqref{converge_cond_1}, there exists a constant $\hat{C}$ such that
\begin{equation}\label{term_3}
  \sum_{s=0}^{t-1}\alpha(s)e_{l,i}(s) \le \hat{C}\sum_{s=0}^{t-1}\alpha^2(s).
\end{equation}
Similarly, $\sum_{s=0}^{t-1}\alpha(s)\|\bar{x}_l(s) - x_{l,i}(s)\| \le \hat{C}\sum_{s=0}^{t-1}\alpha^2(s)$. Therefore, substituting \eqref{inner_finite} and \eqref{term_3} into \eqref{sum_finite}, and using Assumption \ref{asm4}, it follows
\begin{equation}\label{ergodic_converge}
  \max_{x_2\in\mathcal{X}_2}U(\hat{x}_{1,i}(t),x_2) - \min_{x_1\in\mathcal{X}_1}U(x_1,\hat{x}_{2,j}(t))\to 0,\quad t\to\infty.
\end{equation}
Consider the following gap function
\[\epsilon(\hat{x}_1,\hat{x}_2) = U^{\ast} - \min_{x_1\in\mathcal{X}_1}U(x_1,\hat{x}_{2}) + \max_{x_2\in\mathcal{X}_2}U(\hat{x}_{1},x_2) - U^{\ast},\]
where $U^{\ast}$ is the cost value of NE points. Then $\epsilon(\hat{x}_1,\hat{x}_2)\ge 0$, and the equality holds if and only if $(\hat{x}_1,\hat{x}_2)$ is a NE. By \eqref{ergodic_converge},
\[\epsilon(\hat{x}_{1,i}(t),\hat{x}_{2,j}(t))\to 0,\]
which implies that $(\hat{x}_{1,i}(t),\hat{x}_{2,j}(t))$ converges to the mixed-strategy NE set.
\end{proof}
\begin{remark}
   A similar ergodic convergence of dual averaging algorithm for finite two-person zero-sum games has been obtained in \cite{mertikopoulos2019learning}, and here we extend the result to subnetwork zero-sum games by using a distributed mirror descent algorithm.
\end{remark}
\section{SIMULATIONS}
In this section, we provide numerical examples to illustrate the no-regret property and convergence of the proposed algorithms.
\subsection{Network Interdiction}
Consider a network interdiction problem modeled by a two-player zero-sum game in \cite{washburn1995two}. We generalize it to a zero-sum game between two groups, called interdictors (${\bf I}$) and evaders (${\bf E}$). Both groups are composed of $N = 10$ agents. Group ${\bf E}$ attempts to traverse from node $s$ to node $t$ through a network $G_0$, without being detected by group ${\bf I}$. Group ${\bf I}$ selects an arc in the network and sets up an inspection site there. Denote by ${\bf I}_i$ and ${\bf E}_i$ the agent $i$ in group ${\bf I}$ and group ${\bf E}$, respectively. For agent ${\bf I}_i$, if ${\bf E}$ passes arc $k$, then he detects ${\bf E}$ with probability $p_k^i$. Similarly, for agent ${\bf E}_i$, if ${\bf I}$ selects arc $k$ to detect, he is detected with probability $q_k^i$. Denote by $P$ the set of all $s-t$ paths (i.e., the pure strategy set of ${\bf E}$) and $A$ the arc set of network $G_0$. Let $x_p$ be the probability that ${\bf E}$ selects path $p$ and $y_k$ be the probability that ${\bf I}$ selects arc $k$. The individual interdiction probabilities of group ${\bf I}$ and group ${\bf E}$ are defined as
\begin{align*}
f_{1,i}(x,y) &= \sum_{p\in P}\sum_{k\in A}x_pp_k^id_{pk}y_k = x^TA_iy,\\
f_{2,i}(x,y) &= \sum_{p\in P}\sum_{k\in A}x_pq_k^id_{pk}y_k = x^TB_iy,
\end{align*}
respectively, where $A_i = [A_{i,pk}]\triangleq[p_k^id_{pk}]$ is the payoff matrix of agent ${\bf I}_i$, $B_i = [B_{i,pk}]\triangleq[q_k^id_{pk}]$ is the cost matrix of agent ${\bf E}_i$ and $d_{pk} = 1$ if path $p$ includes arc $k$ otherwise $d_{pk} = 0$. The average probability of group ${\bf I}$ interdicting group ${\bf E}$, $\frac{1}{N}\sum_{i=1}^Nf_{1,i}(x,y)$, is equal to the average probability of ${\bf E}$ being interdicted by ${\bf I}$, $\frac{1}{N}\sum_{i=1}^Nf_{2,i}(x,y)$. Let $f(x,y)$ be the common average interdiction probability. The goal of {\bf I} is to maximize $f(\cdot,y)$, while the goal of {\bf E} is to minimize $f(x,\cdot)$. For example, in the network of Fig. 2, there are $3$ $s-t$ paths $s\to i_1\to t$, $s\to i_2\to t$, $s\to t$ and $5$ arcs $(s,i_1)$, $(i_1,t)$, $(s,i_2)$, $(i_2,t)$, $(s,t)$. The mixed strategy sets of {\bf E} and {\bf I} are $\mathcal{X} \triangleq \{x\in\mathbb{R}^3| \sum_{p=1}^3x_p = 1, x\ge 0\}$ and $\mathcal{Y} \triangleq \{y\in\mathbb{R}^5| \sum_{k=1}^5y_k = 1, y\ge 0\}$.
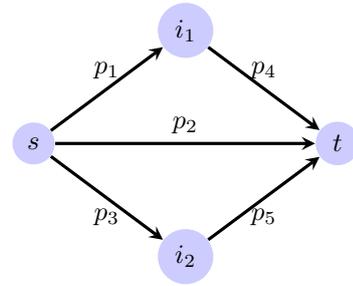
\begin{figure}[htbp]
  \centering
  \tikzstyle{node} = [circle, fill = blue!20]
  \tikzstyle{arrow} = [very thick,->,>=stealth]
  \begin{tikzpicture}[node distance=2cm]
    \node (i1) at (0,0) [node] {$i_1$};
    \node (i2) at (0,-3) [node] {$i_2$};
    \node (start) at (-2,-1.5) [node] {$s$};
    \node (target) at (2,-1.5) [node] {$t$};
    \draw [arrow] (start) -- node[above]{$p_1$}(i1);
    \draw [arrow] (i1) -- node[above]{$p_4$}(target);
    \draw [arrow] (start) -- node[above]{$p_2$}(target);
    \draw [arrow] (start) -- node[below]{$p_3$}(i2);
    \draw [arrow] (i2) -- node[below]{$p_5$}(target);
  \end{tikzpicture}
  \caption{Network interdiction problem}
\end{figure}

Agents in group ${\bf E}$ cooperate to choose a mixed strategy on $P$ while agents in  group ${\bf I}$ cooperate to choose a mixed strategy on $A$. Given a pool of connected graphs, two graphs $\mathcal{G}_1(t)$ and $\mathcal{G}_2(t)$ are randomly selected from the pool at time $t$. Suppose that each agent in groups {\bf E} and {\bf I} exchanges information with their neighbors through graphs $\mathcal{G}_1(t)$ and $\mathcal{G}_2(t)$, respectively. Furthermore, assume that agent ${\bf E}_i$ only receives information of group ${\bf I}$ from agent ${\bf I}_i$. Set $|P| = 30$, $|A| = 60$ and generate $10$ random matrices as the payoff matrices of agents in group {\bf I}. We compare the regret bound and convergence metric of Algorithm \ref{alg2} under different step-sizes.
\begin{figure}[htbp]
  \centering
  \subfigure[]{
  \begin{minipage}{8cm}
    \centering
    \includegraphics[width = .9\textwidth]{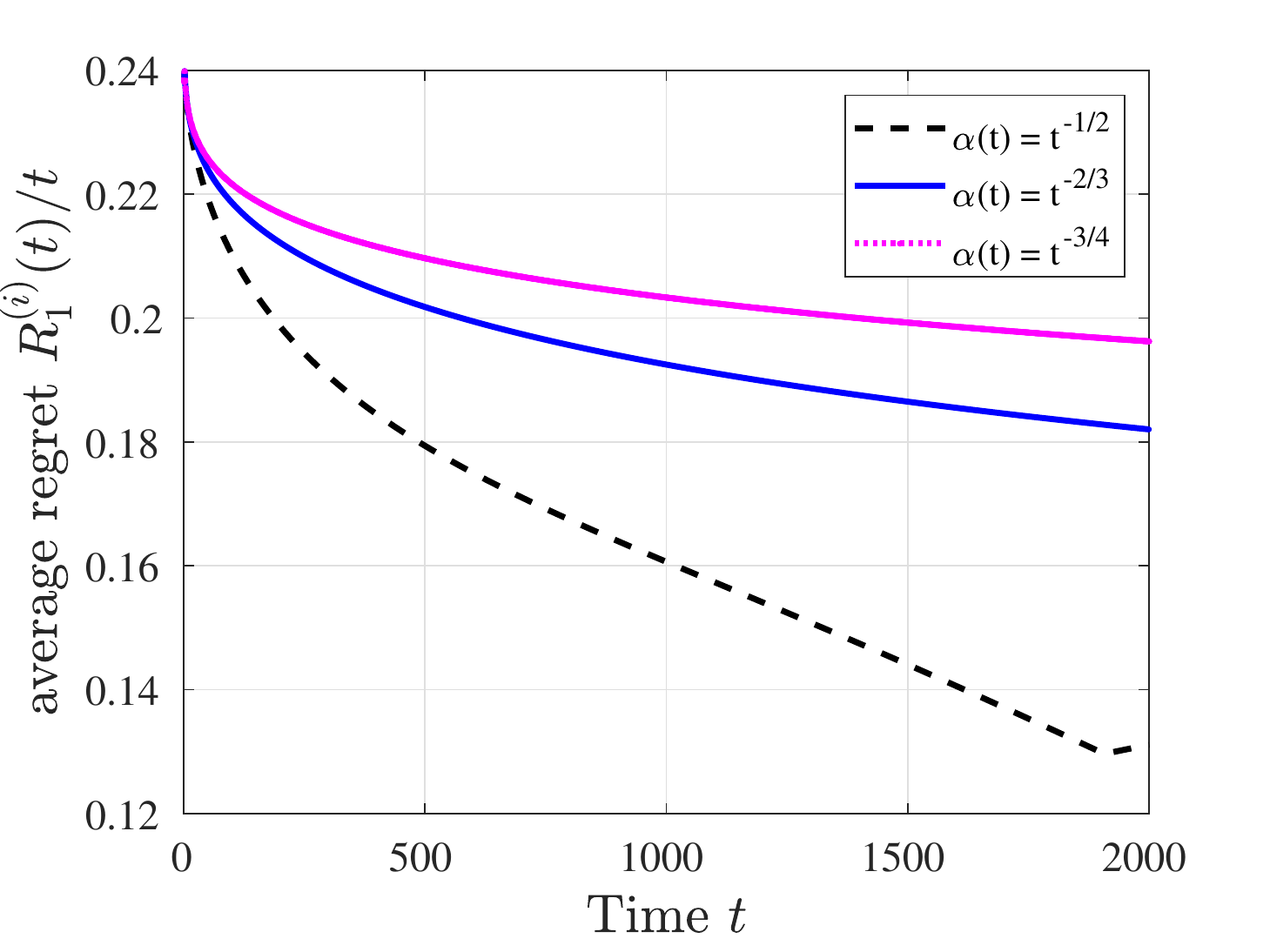}\\
  \end{minipage}
  }
  %hhhh

  \subfigure[]{
  \begin{minipage}{8cm}
    \centering
    \includegraphics[width = .9\textwidth]{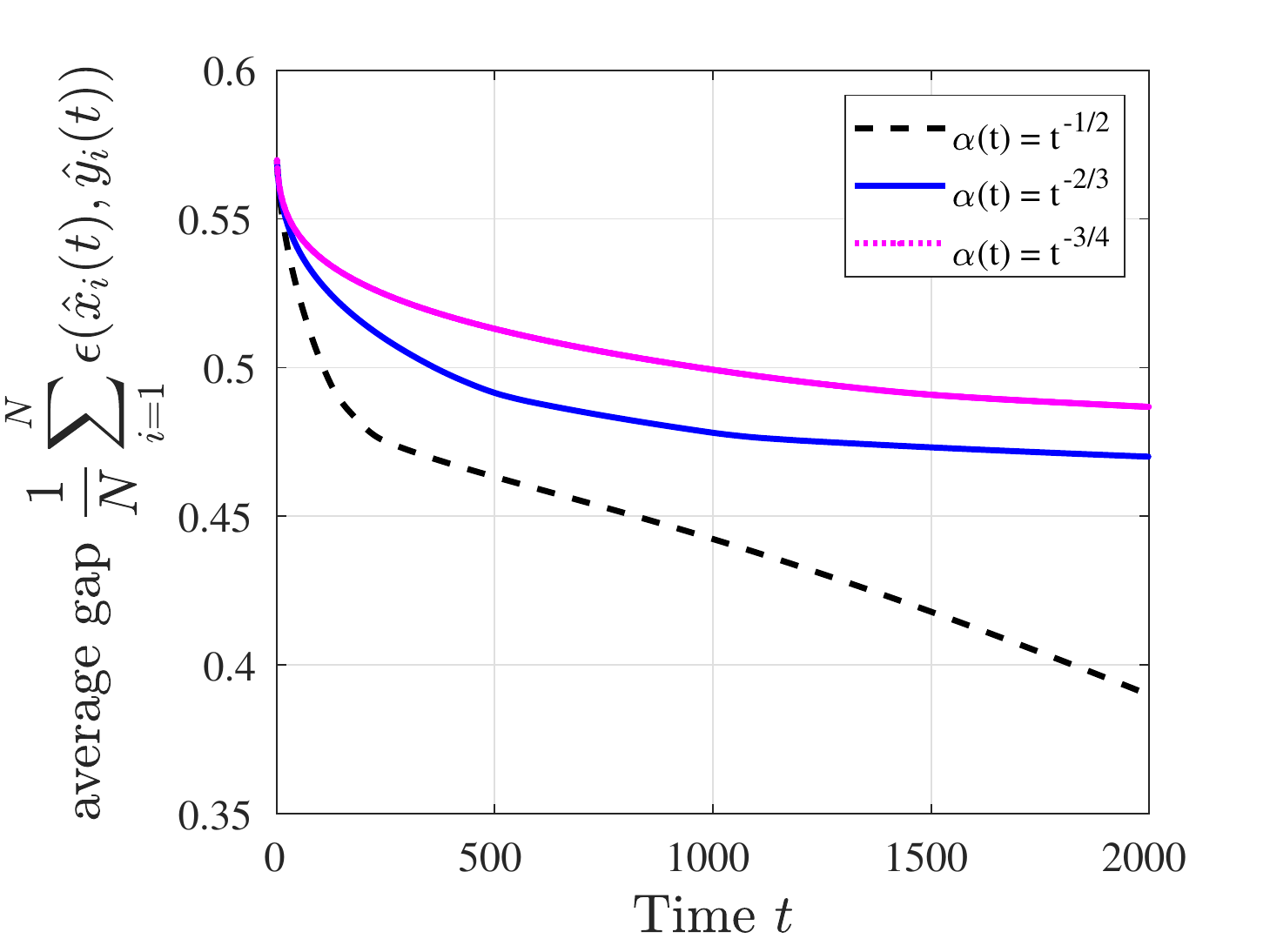}\\
  \end{minipage}
  }
  \caption{(a) Average regret of Algorithm \ref{alg2} with different step-size sequences; (b) average gap function value of Algorithm \ref{alg2} with different step-size sequences}
\end{figure}
In Fig. 3(a), we compare the average regret of agent ${\bf E}_2$ denoted by $R_1^{(2)}(T)/T$ with diminishing step-sizes $\alpha(t) = t^{-\frac{1}{2}}$, $t^{-\frac{2}{3}}$, $t^{-\frac{3}{4}}$. Algorithm \ref{alg2} produces smaller average regret when the decay rate of $\alpha(t)$ is smaller, which supports the theoretical result in Corollary \ref{col1}. Fig. 3(b) provides a plot of the average gap function
\begin{align*}
&\quad\frac{1}{N}\sum_{i=1}^N\epsilon(\hat{x}_{i}(t),\hat{y}_i(t))\\
&\triangleq \frac{1}{N}\sum_{i=1}^N\left(\max_{y\in\mathcal{Y}}\sum_{j=1}^N(\hat{x}_i(t))^TA_jy - \min_{x\in\mathcal{X}}\sum_{j=1}^Nx^TA_j\hat{y}_i(t)\right),
\end{align*}
which illustrates the ergodic convergence of Algorithm \ref{alg2}. Moreover, Fig. 3(b) also shows faster convergence under a slower diminishing step-size sequence.
\subsection{Power Allocation with Adversaries}
In this part, we use a strictly convex-concave game to verify the convergence to the unique NE. Consider a power allocation problem with adversaries over $N = 6$ Gaussian communication channels \cite{gharesifard2013distributed}. The communication rate of each channel depends on its signal power and noise power. $N$ agents connected by network $\Sigma_1$ wish to suitably allocate the total signal power among the channels so as to maximize the total communication rate, while $N$ adversaries connected by network $\Sigma_2$ attempt to minimize the total communication rate by selecting noise powers. To be specific, denote by $\{ch_1,ch_2,\dots,ch_6\}$ the channels. $\Sigma_1$ decides to allocate signal power $x_1$ to  $\{ch_1,ch_4\}$, signal power $x_2$ to $\{ch_2,ch_5\}$, and signal power $x_3$ to $\{ch_3,ch_6\}$. Similarly, $\Sigma_2$ decides to allocate noise power $y_1$ to $\{ch_1,ch_2\}$, noise power $y_2$ to $\{ch_3,ch_4\}$, and noise power $y_3$ to $\{ch_5,ch_6\}$. Both signal and noise powers satisfy a budget constraint, $2x_1 + 2x_2 + 2x_3 = 2$ and $2y_1 + 2y_2 + 2y_3 = 2$. Let $x = (x_1,x_2,x_3)$, $y = (y_1,y_2,y_3)$ and take the objective function of agent $i$ in $\Sigma_1$ as the communication rate of channel $ch_i$, which is defined by
\[f_{1,i}(x,y) = \log\left(1 + \frac{8x_{a(i)}}{\sigma(i) + y_{b(i)}}\right),\]
where $a = [1,2,3,1,2,3]$, $\sigma = [1,2,3,4,5,6]$, $b = [1,1,2,2,3,3]$. For $i\in\{1,2,3,4,5,6\}$, the individual objective function of $\Sigma_2$ is $f_{2,i}(x,y) = -f_{1,i}(x,y)$. The goal of $\Sigma_1$ is to select signal power $x$ to maximize
\[f(x,y) = \sum_{i=1}^6\log\left(1 + \frac{8x_{a(i)}}{\sigma(i) + y_{b(i)}}\right),\]
which is a strictly concave-convex function. The goal of $\Sigma_2$ is to select noise power $y$ to minimize $f(x,y)$.

For simplicity, let $\Sigma_1$, $\Sigma_2$ and $\Sigma_{12}$ be fixed networks and denote by $x^i = (x_1^i,x_2^i,x_3^i)$ ($y^i = (y_1^i,y_2^i,y_3^i)$) the estimated signal (noise) power of channel $i$ in $\Sigma_1$ ($\Sigma_2$). Take $\alpha(t) = 1/\sqrt{t}$ and the entropy regularizer $\psi_l(x) = \sum_{p=1}^3x_p\log x_p$ in Algorithm \ref{alg1}. The trajectories of the average action error of $\Sigma_1$ and the average action error of $\Sigma_2$ are plotted in Fig. 4. Here we use a centralized mirror descent method to compute the NE $(x^{\ast},y^{\ast})$. Fig. 4 shows that $(x^i,y^j)$ converges to $(x^{\ast},y^{\ast})$, and thus, verifies Theorem \ref{thm2}.
\begin{figure}[htbp]
\centering
\includegraphics[width = .4\textwidth]{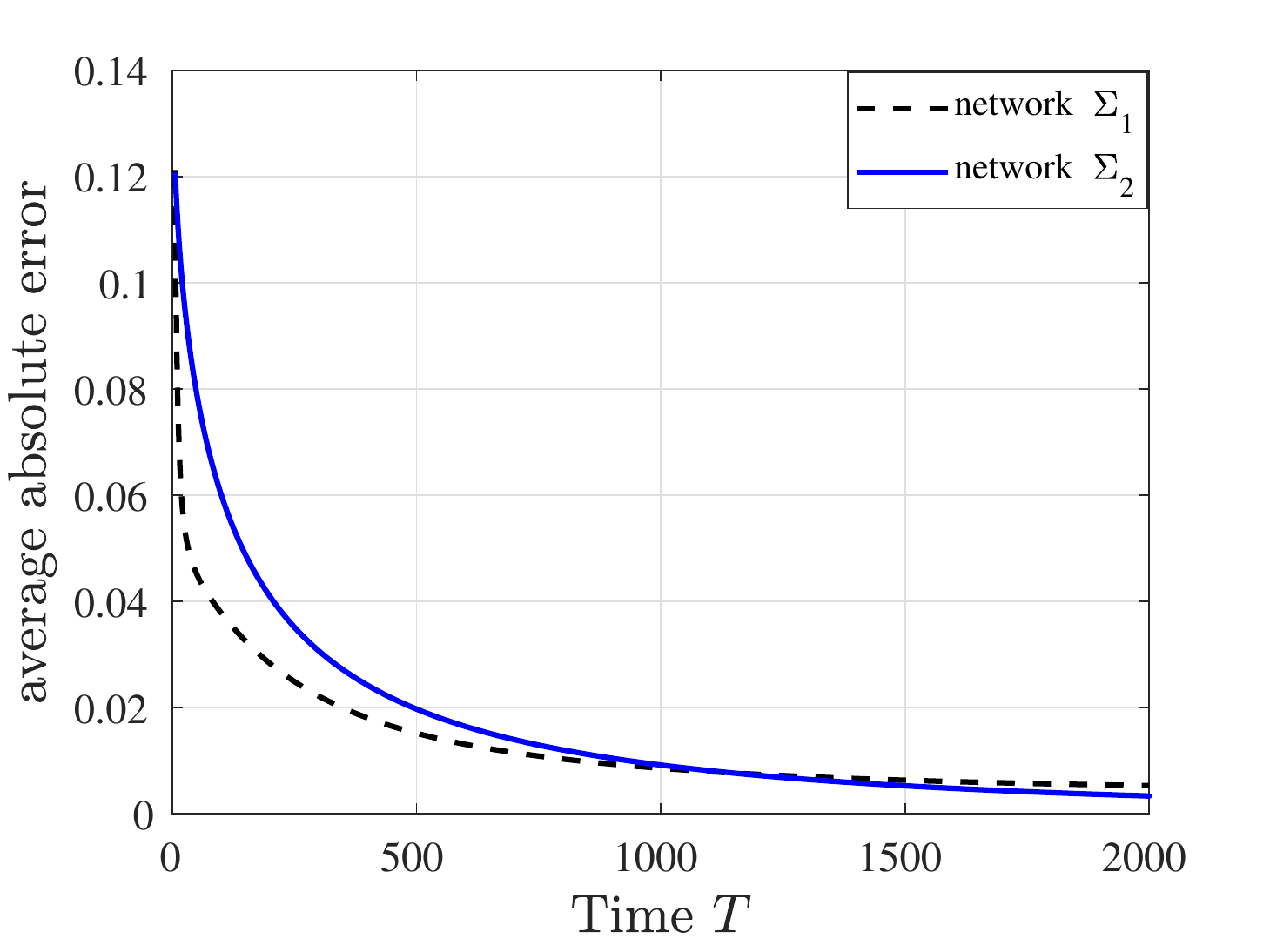}
\caption{Average absolute error $\frac{1}{N}\sum_{i=1}^N\|x^i(T) - x^{\ast}\|$ and $\frac{1}{N}\sum_{i=1}^N\|y^i(T) - y^{\ast}\|$}
\end{figure}

Let us consider the effect of the algebraic connectivity of the communication network on the regret bound. Denote the algebraic connectivity of $\Sigma_1$ as $\lambda_2$, which is defined as the second smallest eigenvalue of its Laplacian matrix. With $\Sigma_2$ and $\Sigma_{12}$ unchanged, the trajectories of the average regret of channel 1 %$\frac{R_1^{(1)}(T)}{T}$
for $\lambda_2\in\{0.4,0.8,1.2\}$ are plotted in Fig. 5. Fig. 5 shows that $\lambda_2$ does not change the rate of average regret, while Algorithm \ref{alg1} produces a smaller regret if $\Sigma_1$ has a larger algebraic connectivity.
\begin{figure}[htbp]
\centering
\includegraphics[width = .4\textwidth]{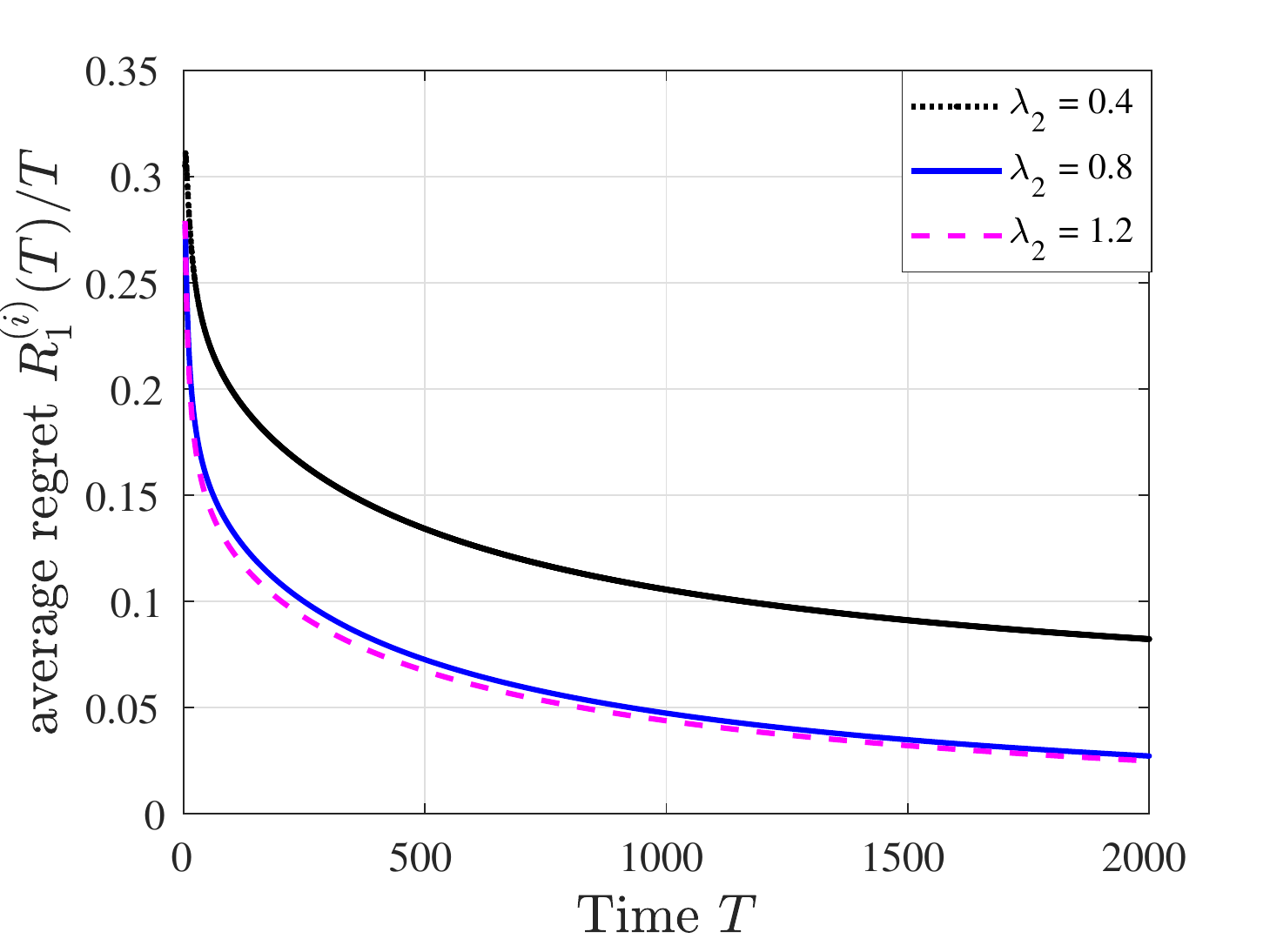}
\caption{Average regret of channel 1 for $\lambda_2\in\{0.4,0.8,1.2\}$}
\end{figure}

%%%%
%%%%-----------------
\section{CONCLUSION}
In this paper, we proposed a distributed mirror descent algorithm for NE seeking in a subnetwork zero-sum game. First, we provide a regret analysis for the proposed algorithm under both diminishing and constant step-sizes. We also prove its convergence to the NE under diminishing step-sizes. Our analysis demonstrates that the proposed algorithm satisfies a no-regret property while converging to the NE. Moreover, we establish an asymptotic error bound on the cost value of averaged iterates in the constant step-size case. Finally, we prove a final-iteration convergence result and an ergodic convergence result, respectively, under diverse assumptions on the cost functions in subnetwork zero-sum finite-strategy games.

\section*{Appendix}
{\it Proof of Lemma 4}. Applying the optimality condition of \eqref{update_1} and recalling \eqref{breg_def}, we have that for each $l = 1,2$, and any $x_l\in\mathcal{X}_l$,
\begin{equation}\label{opt_cond_1}
  \langle \nabla \psi_l(x_{l,i}(t+1)) - \nabla \psi_l(v_{l,i}(t)) + \alpha(t)g_{l,i}(t), x_l - x_{l,i}(t+1)\rangle \ge 0.
\end{equation}
By setting $x_l = v_{l,i}(t)$ in \eqref{opt_cond_1}, we obtain
\begin{align}
  &\Big\langle \nabla \psi_l(x_{l,i}(t+1)) - \nabla \psi_l(v_{l,i}(t)) + \alpha(t)g_{l,i}(t), \notag\\
  &\ \  x_{l,i}(t+1) - v_{l,i}(t)\Big\rangle\le 0.\label{proj}
\end{align}
Therefore, from the strong convexity of $\psi_l$,
\begin{align*}
  &\quad \alpha(t)\|g_{l,i}(t)\|_{\ast}\|v_{l,i}(t) - x_{l,i}(t+1)\|\\ &\ge \langle\alpha(t)g_{l,i}(t), v_{l,i}(t) - x_{l,i}(t+1)\rangle\\
  &\ge \langle \nabla \psi_l(x_{l,i}(t+1)) - \nabla \psi_1(v_{l,i}(t)), x_{l,i}(t+1) - v_{l,i}(t)\rangle\\
  &\ge \sigma_l\|v_{l,i}(t) - x_{l,i}(t+1)\|^2,
\end{align*}
which together with \eqref{bound_subgradient} yields \eqref{lem_bound_1}.\\
{\it Proof of Lemma 5}. By \eqref{commu_v} and defining $p_{l,i}(t-1) = x_{l,i}(t) - v_{l,i}(t-1)$, we obtain
\begin{align*}
  x_{l,i}(t) &= v_{l,i}(t-1) - (v_{l,i}(t-1) - x_{l,i}(t))\notag\\
  &= \sum_{j\in\mathcal{N}_{l,i}(t-1)}w_{l,ij}(t-1)x_{l,j}(t-1) + p_{l,i}(t-1)\notag\\
  &= \sum_{j=1}^{n_l}[\Phi_l(t-1,0)]_{ij}x_{l,j}(0)\\
  &\quad + \sum_{s=1}^{t-1}\sum_{j=1}^{n_l}[\Phi_l(t-1,s)]_{ij}p_{l,j}(s-1) + p_{l,i}(t-1).
\end{align*}
 Since $W_l(t)$ is doubly stochastic,
\begin{align*}
  \bar{x}_l(t) &= \frac{1}{n_l}\sum_{i=1}^{n_l}x_{l,i}(t)\\
  &\quad = \frac{1}{n_l}\sum_{j=1}^{n_l}x_{l,j}(0) + \frac{1}{n_l}\sum_{s=1}^t\sum_{j=1}^{n_l}p_{l,j}(s-1).
\end{align*}
Therefore, Lemma \ref{lem3} and \eqref{lem_bound_1} together yield
\begin{align}
  &\quad \|x_{l,i}(t) - \bar{x}_l(t)\|\notag\\ &\le \sum_{j=1}^{n_l}\bigg|[\Phi_l(t-1,0)]_{ij} - \frac{1}{n_l}\bigg|\|x_{l,j}(0)\|\notag\\
  &\quad + \sum_{s=1}^{t-1}\sum_{j=1}^{n_l}\bigg|[\Phi_l(t-1,s)]_{ij} - \frac{1}{n_l}\bigg|\|p_{l,j}(s-1)\|\notag\\
  &\quad + \|\frac{1}{n_l}\sum_{j=1}^{n_l}p_{l,j}(t-1) - p_{l,i}(t-1)\|\notag\\
  &\le n_l\Gamma_l\theta_l^{t-1}\Lambda_l\notag\\
  &\quad + \frac{1}{\sigma_l}\left(n_lL_{l,1}\Gamma_l\sum_{s=1}^{t-1}\theta_l^{t-1-s}\alpha(s-1) + 2L_{l,1}\alpha(t-1)\right).\notag
\end{align}
Thus, \eqref{lem_bound_2} holds. Furthermore, by $\sum_{j=1}^{n_l}w_{l,ij}(t) = 1$,
\begin{align*}
  &\quad\|v_{l,i}(t) - \bar{x}_{l}(t)\|\\
   &\overset{\eqref{commu_v}}{ =} \|\sum_{j=1}^{n_l}w_{l,ij}(t)x_{l,j}(t) - \bar{x}_{l}(t)\|\notag\\
  &\le \sum_{j=1}^{n_l}w_{l,ij}(t)\|x_{l,j}(t) - \bar{x}_{l}(t)\|\notag\\
  &\overset{\eqref{lem_bound_2}} {\le} n_l\Gamma_l\theta_l^{t-1}\Lambda_l\\
  &\quad + \frac{1}{\sigma_l}\left(n_lL_{l,1}\Gamma_l\sum_{s=1}^{t-1}\theta_l^{t-1-s}\alpha(s-1) + 2L_{l,1}\alpha(t-1)\right),
\end{align*}
Thus, \eqref{lem_bound_3} holds. Similarly, by $\sum_{j=1}^{n_l}w_{12,ij}(t) = 1$ and \eqref{lem_bound_2}, we obtain \eqref{lem_bound_4}.\\
{\it Proof of Lemma 6}. By setting $x_l = \breve{x}_l$ in \eqref{opt_cond_1} and rearranging the terms, we obtain
\begin{align*}
  &\quad \langle \alpha(t)g_{l,i}(t),x_{l,i}(t+1) - \breve{x}_l\rangle\\ &\le \langle \nabla\psi_l(v_{l,i}(t)) - \nabla\psi_l(x_{l,i}(t+1)),x_{l,i}(t+1) - \breve{x}_l\rangle\\
  &= D_{\psi_l}(\breve{x}_l, v_{l,i}(t)) - D_{\psi_l}(\breve{x}_l, x_{l,i}(t+1))\\
  &\quad - D_{\psi_l}(x_{l,i}(t+1), v_{l,i}(t))\\
  &\le D_{\psi_l}(\breve{x}_l, v_{l,i}(t)) - D_{\psi_l}(\breve{x}_l, x_{l,i}(t+1))\\
  &\le \sum_{j=1}^{n_l}w_{l,ij}(t)D_{\psi_l}(\breve{x}_l, x_{l,j}(t)) - D_{\psi_l}(\breve{x}_l, x_{l,i}(t+1)),
\end{align*}
where the equality follows from \eqref{breg_prop1} with $x = v_{l,i}(t)$, $y = \breve{x}_l$, $z = x_{l,i}(t+1)$, the second inequality holds since $D_{\psi_l}(x_{l,i}(t+1), v_{l,i}(t))\ge 0$ by \eqref{breg_prop2}, and the last inequality follows from Assumption \ref{asm3} and Jensen's inequality since $v_{l,i}(t) = \sum_{j=1}^{n_l}w_{l,ij}(t)x_{l,j}(t)$ and $\sum_{j=1}^{n_l}w_{l,ij}(t) = 1$. Therefore, from Assumption \ref{asm1}(i) and $\sum_{i=1}^{n_l}w_{l,ij}(t) = 1$,
\begin{align}
  &\quad\frac{1}{n_l}\sum_{t=1}^T\sum_{i=1}^{n_l}\langle g_{l,i}(t), x_{l,i}(t+1) - \breve{x}_l\rangle\notag\\
  &\le \frac{1}{n_l}\sum_{t=1}^T\frac{1}{\alpha(t)}\Big[\sum_{i=1}^{n_l}(\sum_{j=1}^{n_l}w_{l,ij}(t)D_{\psi_l}(\breve{x}_l, x_{l,j}(t))\notag\\
  &\qquad\qquad\quad\quad - D_{\psi_l}(\breve{x}_l, x_{l,i}(t+1)))\Big]\notag\\
  &= \frac{1}{n_l}\sum_{i=1}^{n_l}\sum_{t=1}^T\frac{1}{\alpha(t)}[D_{\psi_l}(\breve{x}_l, x_{l,i}(t)) - D_{\psi_l}(\breve{x}_l, x_{l,i}(t+1))]\notag\\
  &\le \frac{1}{n_l}\sum_{i=1}^{n_l}\Big[\frac{1}{\alpha(1)}D_{\psi_l}(\breve{x}_l, x_{l,i}(1))\notag\\
  &\qquad + \sum_{t=2}^TD_{\psi_l}(\breve{x}_l, x_{l,i}(t))\left(\frac{1}{\alpha(t)} - \frac{1}{\alpha(t-1)}\right)\Big]\notag\\
  &\le \frac{1}{n_l}\sum_{i=1}^{n_l}\left[\frac{\Upsilon_l^2}{\alpha(1)} + \sum_{t=2}^T\left(\frac{\Upsilon_l^2}{\alpha(t)} - \frac{\Upsilon_l^2}{\alpha(t-1)}\right)\right] \le \frac{\Upsilon_l^2}{\alpha(T)}.\label{bound_2}
\end{align}
\bibliographystyle{IEEEtran}
\bibliography{no_regret_distributed_learning.bib}

\end{document}